\providecommand{\U}[1]{\protect\rule{.1in}{.1in}}
\newcommand{\BG}{{\mathbb {G}}}
\newcommand{\BP}{{\mathbb {P}}}
\newcommand{\Br}{{\mathrm{Br}}}
\newcommand{\Gal}{{\mathrm{Gal}}}
\newcommand{\Ker}{{\mathrm{Ker}}}
\newcommand{\ord}{{\mathrm{ord}}}
\newcommand{\Pic}{\mathrm{Pic}}
\renewcommand{\mod}{\ \mathrm{mod}\ }
\newcommand{\et}{{\operatorname{\acute{e}t}}}
\numberwithin{equation}{section}
\theoremstyle{remark}
\newtheorem{defi}{\rm{\textbf{Definition}}}[section]
\newtheorem{rem}[defi]{\rm{\textbf{Remark}}}
\theoremstyle{plain}
\newtheorem{thm}[defi]{\rm{\textbf{Theorem}}}
\newtheorem{cor}[defi]{\rm{\textbf{\textbf{Corollary}}}}
\newtheorem{lem}[defi]{\rm{\textbf{Lemma}}}
\newtheorem{prop}[defi]{\rm{\textbf{\textbf{Proposition}}}}
\begin{document}

  \title[Brauer-Manin obstruction for Markoff surfaces] {Brauer-Manin obstruction for Markoff surfaces}

\author{J.-L. Colliot-Th\'el\`ene}
\address{Universit\'e Paris Sud\\Math\'ematiques, B\^atiment 307\\91405 Orsay Cedex\\France}
\email{jlct@math.u-psud.fr}

\author{Dasheng Wei}
\address{Academy of Mathematics and System Science \\ Chinese Academy of Sciences \\ Beijing 100190,  China;
  School of Mathematical Sciences, University of Chinese Academy of Sciences \\ Beijing 100049, China
}
\email{dshwei@amss.ac.cn}

\author{Fei Xu}
\address{School of Mathematical Sciences \\ Capital Normal University \\ Beijing 100048, China}
\email{xufei@math.ac.cn}

\date{submitted 30 September 2018; revised, June 11th, 2019}

\begin{abstract}
Ghosh and Sarnak have studied integral points on  surfaces defined by an equation
$x^2+y^2+z^2-xyz= m$ over the integers.
For these affine surfaces, we systematically study the Brauer group
 and  the Brauer-Manin obstruction to    the
 integral Hasse principle.
We prove  that strong approximation for integral points on any such surface,
away from any finite set of places, fails, and that, for $m\neq 0, 4$,  the Brauer group does
not control  strong approximation.
 \end{abstract}

\subjclass[2010]{11G35 (11D25, 14F22)}
\keywords{Brauer group, Brauer-Manin obstruction, strong approximation, Markoff surface}

\maketitle

\section{Introduction}

Fix $m\in \Bbb Z$. Let $d:=m-4$.
 Let $ \mathcal{U}_{m} \subset {\Bbb A}^3_{\Bbb Z}$
be the affine scheme over $\Bbb Z$ defined by the equation
\begin{equation} \label{markoff1} x^2+y^2+z^2-xyz= m .\end{equation}
It is equivalently defined  by the equation
\begin{equation} \label{markoff2} (2z-xy)^2- 4d = (x^2-4)(y^2-4),  \end{equation}
  by the equation
\begin{equation} \label{markoff3}  (x-y-z+2)^2-d=(x+2)(y-2)(z-2),  \end{equation}
as well as similar ones obtained by permutation of coordinates.

The surface  $U_{m}=\mathcal{U}_{m}\times_{\Bbb Z}\Bbb Q$
over $\Bbb Q$ is called a Markoff surface. 
Unless otherwise mentioned, we assume $m\neq 0$ and $d\neq 0$. These are the conditions for
  $U_{m}$ to be smooth.

In \cite{GS}, A. Ghosh and P. Sarnak have studied the set $\mathcal{U}_{m}(\Bbb Z)$ of integral solutions
 of
such equations.
A key tool is the action of the automorphism group $\Gamma$
generated by the following three types of elements

(a) the Vieta involution: $(x, y, z)\mapsto (yz-x, y, z) $.

(b) the sign change: $(x, y, z) \mapsto (-x,-y, z)$.

(c) the permutations of $x, y, z$.

We denote
$\mathcal{U}_{m}(A_{\Bbb Z})
= \prod_{p}
\mathcal{U}_{m}(\Bbb Z_{p}),$ where $p$ runs through all primes
and $\infty$, and $\Bbb Z_{\infty}={\Bbb R}$.
Let
$${\mathcal{U}_{m}(A_{\Bbb Z}  )}_{\bullet} =
 \prod_{p < \infty}
  \mathcal{U}_{m}(\Bbb Z_{p})
 \times \pi_{0}(U_{m}(\Bbb R))$$
where $ \pi_{0}(U_{m}(\Bbb R))$ is the set of connected components of $U_{m}(\Bbb R)$.
Let  $$\mathcal{U}_{m}(A_{\Bbb Z})_{\bullet}^{\Br} \subset {\mathcal{U}_{m}(A_{\Bbb Z}  )}_{\bullet} $$ be the subset consisting of elements
which are orthogonal to $\Br(U_{m})$ for the Brauer-Manin pairing 
$$\mathcal{U}_{m}(A_{\Bbb Z})_{\bullet} \times \Br(U_{m}) \to \Bbb Q/\Bbb Z$$
(see \cite[\S 1]{CTXu09}).
This is called the (reduced) Brauer-Manin set of $\mathcal{U}_{m}$.
\medskip

Here are some of the main results from \cite{GS}.

(0) $\mathcal{U}_{m}(A_{\Bbb Z}) =\emptyset$ if and only if
$m \equiv 3$  mod $ 4$ or $m \equiv \pm 3$ mod $9$.
Other values of $m$ are called ``admissible''.

(1) For $m$ admissible and ``generic''  (\cite[p. 3]{GS}, see Proposition \ref{induction} below),
 following Markoff, Hurwitz and Mordell,  
Ghosh and Sarnak develop a reduction theory : there
exists a bounded fundamental domain in $\Bbb R^3$ for integral solutions.
In particular  the set $\mathcal{U}_{m}(\Bbb Z)/\Gamma$ is finite.

(2) Suppose   
that $m$
 is not a square.
 Then $\mathcal{U}_{m}(\Bbb Z)$ is Zariski dense in $\mathcal{U}_{m}$ if and only if $\mathcal{U}_{m}(\Bbb Z)$ is not empty
 \cite[(1.5)]{GS}. Zariski density still holds if  $m$ is a square and contains an odd prime factor congruent to 1
 modulo 4 \cite[final comment in \S 5.2.1]{GS}.

(3)  Strong approximation need not hold, i.e.
$\mathcal{U}_{m}(\Bbb Z)$ need not be dense in
 $\mathcal{U}_{m}(A_{\Bbb Z})_{\bullet}$ (see \cite[p. 21]{GS}).
  This uses the quadratic reciprocity law.

(4) There are infinitely many $m$'s such that $\mathcal{U}_{m}$ does not satisfy the  integral Hasse principle.
 The examples in \cite{GS} are all of the shape $d=r.v^2$, with $r = \pm 2$, $r=12$, $r=20$,
and  specific properties for the primes dividing $v$. The arguments use quadratic reciprocity.
They are in the same spirit as earlier examples  \cite{CTXu09,CTW} accounted for by the integral Brauer-Manin obstruction.
From a historical point of view, it is interesting to note that
 examples very close to those of \cite{GS} are already given in  Mordell's 1953 paper \cite[\S 3]{Mo}.

 (5) For ``generic'' values of $m$,
 reduction theory  leads to examples where  $\mathcal{U}_{m}(A_{\Bbb Z}) \neq \emptyset$  but
   $\mathcal{U}_{m}(\Bbb Z) = \emptyset$. On the basis of intensive numerical experiments, Ghosh and Sarnak
  suggest that there are many such examples that cannot be explained by
   a reciprocity argument, i.e. for which, in our language,    $\mathcal{U}_{m}(A_{\Bbb Z})_{\bullet}^{\Br} \neq \emptyset$. More precisely they predict
   a count for the set of $m$'s with local solutions and no global solution which is much higher than what their families of
   counterexamples produce.
     
 \bigskip

 The cubic surface $X_{m} \subset {\Bbb P}^3_{\Bbb Q}$
given by the homogeneous equation $t(x^2+y^2+z^2)-xyz= mt^3$ is smooth as soon as $m \neq 0, 4$.
 The surface $U_{m}={\mathcal U}_{m}\otimes_{\Bbb Z}\Bbb Q$ is the complement in $X_{m}$ of the hyperplane section $H$ defined by
 plane section $t=0$. 
Its geometric fundamental group is trivial (Prop. \ref{pi1}).
Thus $U_{m}$, or rather the pair $(X_{m},H)$,  is in a strong sense a  log K3 surface \cite[Definition 2.4]{H}.

 The search for integral points on $\mathcal{U}_{m}$ bears some analogy with the search for
 rational points on smooth, projective $K3$-surfaces $W$. For this latter situation, Skorobogatov has put forward
 the conjecture : The closure of the set $W(\Bbb Q)$ in the adelic set $W(A_{\Bbb Q})_{\bullet}$ is just the Brauer-Manin
 set $W(A_{\Bbb Q})_{\bullet}^{\Br}$.
One may wonder whether there is a similar result for integral points on log K3 surfaces $U$.
 Here some restriction must be made. It may indeed happen that the set $\mathcal{U}(\Bbb Z)$ is not empty but
not Zariski dense in $U$ (Harpaz  \cite[Theorem 1.4]{H}; Jahnel and Schindler \cite[Theorem 2.6]{JS}).

Here are some questions raised by the paper of Ghosh and Sarnak.

(A) A first problem is to check that all counterexamples in \cite{GS}
 are of Brauer-Manin type, and to search for as many families of counterexamples as possible.

This problem is best handled by solving problems (B) and (C) :

(B) For arbitrary $m$, can one determine $\Br(U_{m})/\Br(\Bbb Q)$ ? Is this quotient finite ?

(C)  For arbitrary $m$, can one determine $\mathcal{U}_{m}(A_{\Bbb Z})_{\bullet}^{\Br}$ ?

(D) When (how often) is the closure of $\mathcal{U}_{m}(\Bbb Z) $ equal to the Brauer-Manin set
  $\mathcal{U}_{m}(A_{\Bbb Z})_{\bullet}^{\Br}$ ?

 \medskip

Here are the main results of our paper.

(a)   We solve Problem (A), i.e. we check that the  counterexamples to the integral Hasse principle
based on the quadratic reciprocity law
 in \cite{GS} are of Brauer-Manin type,
 and  we  produce more families of counterexamples of the same kind.

 (b) We solve Problem (B) for all values of $m$.  This in principle solves Problem (C).

(c) Over an arbitrary ground field, we give generators for the algebraic part of the Brauer group of $U$,
and  we systematically study the ``transcendental part'' of the Brauer group of $U$.

 (d) We get a satisfactory answer to Problem (D).
More precisely, we prove (see Theorem \ref{wsa}):
\begin{thm} Let $m\in \mathbb Z$ be any integer. Suppose $\mathcal U_m (A_\mathbb Z)\neq \emptyset$. For any finite set $S$
of primes the image of the natural map $\mathcal U_m(\mathbb Z) \to \prod_{p \notin S} \mathcal U_m (\mathbb Z_p)$  is not dense.
\end{thm}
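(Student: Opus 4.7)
The plan is to establish non-density via a Brauer--Manin type obstruction, combining the explicit computation of $\Br(U_m)$ from earlier in the paper with the Ghosh--Sarnak reduction theory (Proposition \ref{induction}), which organizes $\mathcal U_m(\Bbb Z)$ into finitely many $\Gamma$-orbits of bounded fundamental solutions. This rigid structure on the integer side is what forces a global reciprocity constraint that cannot be satisfied by a generic adelic point outside $S$.

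Fix a Brauer class $\alpha \in \Br(U_m)$ non-trivial modulo $\Br(\Bbb Q)$; explicit representatives arise from the identities (\ref{markoff2})--(\ref{markoff3}), for instance as quaternion algebras of the form $(x+2,\, d)$. (The residue along the divisor $\{x=-2\}$ vanishes because the equation restricts there to $(y+z)^2 = d$, making $d$ a square in the function field of that component, and similarly for the other boundary divisors.) For any integer solution $P = (x,y,z)$, global class field theory gives
$$\sum_{p \leq \infty}\inv_p(\alpha(P)) = 0 \quad \text{in } \Bbb Q/\Bbb Z.$$
By compactness of each $\mathcal U_m(\Bbb Z_p)$ and local constancy of $\inv_p \circ \alpha$, the partial sum $\sum_{p \in S \cup \{\infty\}}\inv_p(\alpha(P))$ takes only finitely many values, and hence $\sum_{p \notin S}\inv_p(\alpha(P))$ lies in a finite subset $F \subset \Bbb Q/\Bbb Z$ as $P$ ranges over $\mathcal U_m(\Bbb Z)$. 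Any adelic point $(P_q)_{q \notin S}$ with partial Brauer sum outside $F$ then cannot be approximated by integer solutions, since the partial sum is locally constant on $\prod_{q \notin S}\mathcal U_m(\Bbb Z_q)$.

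The main obstacle is to carry out this construction uniformly in $S$. Since $\Br(U_m)/\Br(\Bbb Q)$ is finite and each fixed representative has only finitely many primes of bad reduction, when $S$ contains all these primes the partial sum $\sum_{p \notin S}\inv_p(\alpha(\cdot))$ vanishes identically and no Brauer obstruction persists. This gap is closed by combining several Brauer generators (and exploiting the fact that the $\Gamma$-action on $U_m$ naturally twists Brauer classes and can be used to spread their support) with a direct structural argument: the bounded fundamental solutions have only finitely many reductions modulo a large prime $p \notin S$, and by following the joint $\Gamma$-action across several primes outside $S$ one exhibits residue profiles in $\prod_{p \notin S}\mathcal U_m(\Bbb F_p)$ uncovered by integer orbits, yielding non-approximable $p$-adic points even in the regime where Brauer--Manin is exhausted.
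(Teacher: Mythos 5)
Your proposal correctly identifies the fatal problem with a pure Brauer--Manin strategy: once $S$ contains all primes of bad reduction for the (finitely many) generators of $\Br(U_m)/\Br_0(U_m)$, the partial sum $\sum_{p\notin S}\inv_p(\alpha(\cdot))$ is identically zero on $\prod_{p\notin S}\mathcal U_m(\mathbb Z_p)$, and no obstruction survives. But the patch you then sketch does not close the gap. The suggestion that the $\Gamma$-action ``twists Brauer classes and can be used to spread their support'' is not correct: the generators of $\Gamma$ are automorphisms of the $\mathbb Z$-scheme $\mathcal U_m$, so pulling a Brauer class back by any $\gamma\in\Gamma$ does not enlarge its set of ramified primes. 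You cannot manufacture new bad primes outside $S$ this way. The concluding sentence --- ``by following the joint $\Gamma$-action across several primes outside $S$ one exhibits residue profiles \dots uncovered by integer orbits'' --- gestures at the right mechanism but is not an argument; it is exactly the nontrivial construction that a proof has to supply.

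The paper's proof abandons the Brauer group entirely (the introduction says so explicitly) and replaces it by the following concrete device. Enlarge $S$ as convenient, let $R$ be the product of primes in $S$ away from $m$, choose an auxiliary integer $a$ with $a^2R^2-2aR-m\ge 0$ and $aR>\sqrt{|m|+9}$, and set $d'=a^2R^2-m$; all primes dividing $d'$ lie outside $S$. At those primes one imposes the condition
\[
(x_p,y_p,z_p)\equiv (\pm aR,0,0),\ (0,\pm aR,0),\ \text{or}\ (0,0,\pm aR)\pmod{p^{\ord_p(d')}}.
\]
This set is nonempty (Hensel) and, crucially, is stable under the Vieta involution, sign changes, and coordinate permutations, hence $\Gamma$-invariant. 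An integral point satisfying these congruences can, by Proposition \ref{induction} (reduction theory), be moved by $\Gamma$ into a fundamental region where the smallest coordinate is at most roughly $(m-27)^{1/3}$ or equals $0,1,2$; but the congruences force that coordinate to be either $0$ (which then forces $y_0^2+z_0^2=m$ and is ruled out by the size of $aR$) or of size at least $aR>\sqrt{|m|+9}$. This is the contradiction. The case $m=0$ is treated separately using the fact that $\mathcal U_0(\mathbb Z)$ has only the two $\Gamma$-orbits of $(0,0,0)$ and $(3,3,3)$. What is missing from your proposal is precisely this explicit $\Gamma$-invariant local condition at primes outside $S$ together with the size comparison; without it, you have a plausible heuristic but no proof.
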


The proof of this theorem does not involve the Brauer group, it only uses reduction theory. It should be compared  with the statement
at the bottom of page 2 of \cite{GS}, with reference to \cite{BGS},  that if   $d=m-4 >0$ is a square, then $\mathcal U_m$ ``satisfies a form of strong approximation''. See Remark \ref{compareBGS} below.

 As a corollary, one gets (see Corollary \ref{notsa})
\begin{cor}
 Suppose $m\neq 0, 4$ and $\mathcal{U}_{m}(A_{\Bbb Z})_{\bullet}^{\Br}\neq \emptyset$. Then  $\mathcal U_m(\mathbb Z)$ is not dense in $\mathcal{U}_{m}(A_{\Bbb Z})_{\bullet}^{\Br}$.
 \end{cor}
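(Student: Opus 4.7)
My plan is to deduce the corollary from the preceding theorem by a purely formal argument: if $\mathcal{U}_{m}(\Bbb Z)$ were dense in $\mathcal{U}_{m}(A_{\Bbb Z})_{\bullet}^{\Br}$, then, for a suitable finite set $S_{0}$ of primes, its image in $\prod_{p \notin S_{0}} \mathcal{U}_{m}(\Bbb Z_{p})$ would also be dense, contradicting the theorem. The only input besides the theorem is finiteness of $\Br(U_{m})/\Br(\Bbb Q)$, which is precisely what is supplied by the solution of Problem (B) claimed in the introduction.

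First I would invoke this solution of Problem (B): for $m \neq 0, 4$, the quotient $\Br(U_{m})/\Br(\Bbb Q)$ is finite. Fix finitely many representatives $\alpha_{1}, \ldots, \alpha_{n} \in \Br(U_{m})$ of a generating set of this quotient. Each $\alpha_{i}$ comes from a class in $\Br(\mathcal{U}_{m} \times_{\Bbb Z} \Bbb Z[1/S_{i}])$ for some finite set $S_{i}$ of primes; set $S_{0} = \bigcup_{i} S_{i}$.

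Next I would exploit the standard local fact that, for $p \notin S_{0}$, the evaluation of any $\alpha_{i}$ on a point of $\mathcal{U}_{m}(\Bbb Z_{p})$ lies in $\Br(\Bbb Z_{p}) = 0$ (since $\mathcal{U}_{m}$ is smooth over $\Bbb Z_{p}$ and $\Bbb F_{p}$ has trivial Brauer group). Combined with invariance of the Brauer-Manin pairing under translation by $\Br(\Bbb Q)$ (global reciprocity), this shows that the Brauer-Manin condition on an adele $(P_{v}) \in \mathcal{U}_{m}(A_{\Bbb Z})_{\bullet}$ depends only on its components at places in $S_{0} \cup \{\infty\}$. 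Using $\mathcal{U}_{m}(A_{\Bbb Z})_{\bullet}^{\Br} \neq \emptyset$, this yields the following key claim: given any Brauer-Manin adele $(P_{v}) \in \mathcal{U}_{m}(A_{\Bbb Z})_{\bullet}^{\Br}$ and any $(Q_{p})_{p \notin S_{0}} \in \prod_{p \notin S_{0}} \mathcal{U}_{m}(\Bbb Z_{p})$, the adele obtained by replacing $P_{p}$ with $Q_{p}$ for $p \notin S_{0}$ still lies in $\mathcal{U}_{m}(A_{\Bbb Z})_{\bullet}^{\Br}$. In particular, the continuous projection
\[ \mathcal{U}_{m}(A_{\Bbb Z})_{\bullet}^{\Br} \lra \prod_{p \notin S_{0}} \mathcal{U}_{m}(\Bbb Z_{p}) \]
is surjective (and open, being a projection from a product).

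Finally I would argue by contradiction. Suppose $\mathcal{U}_{m}(\Bbb Z)$ is dense in $\mathcal{U}_{m}(A_{\Bbb Z})_{\bullet}^{\Br}$. Composing with the surjective open projection just constructed, the image of $\mathcal{U}_{m}(\Bbb Z)$ in $\prod_{p \notin S_{0}} \mathcal{U}_{m}(\Bbb Z_{p})$ is dense. This directly contradicts the theorem, applied to the finite set $S := S_{0}$ (note that $\mathcal{U}_{m}(A_{\Bbb Z}) \neq \emptyset$ follows from $\mathcal{U}_{m}(A_{\Bbb Z})_{\bullet}^{\Br} \neq \emptyset$, so the theorem applies). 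Hence $\mathcal{U}_{m}(\Bbb Z)$ is not dense in $\mathcal{U}_{m}(A_{\Bbb Z})_{\bullet}^{\Br}$, as required. The single substantive obstacle here is thus the finiteness of $\Br(U_{m})/\Br(\Bbb Q)$ and control over its ramification, which lies in Problem (B); the rest of the argument is formal.
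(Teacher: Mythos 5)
Your proof is correct and follows essentially the same route as the paper: the paper cites the finiteness of $\Br(U_m)/\Br_0(U_m)$ (Theorems \ref{br} and \ref{br-gen}), then combines Lemma \ref{lem:wsa} (which packages exactly your ``spread out the Brauer classes, observe vanishing away from $S_0$, deduce surjectivity of the projection'' argument) with Theorem \ref{wsa}. The only difference is that you inline the content of Lemma \ref{lem:wsa} rather than stating it as a separate general lemma; your parenthetical appeal to smoothness over $\Bbb Z_p$ is a slight red herring (the vanishing comes from $\alpha_i$ extending over $\Bbb Z_p$ together with $\Br(\Bbb Z_p)=0$), but the substance is the same.
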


Since there are infinitely many $m\neq 0,4$ such that $\mathcal U_m(\mathbb Z)$ is Zariski dense in $\mathcal U_m$ by \cite[\S 5.2]{GS}, we obtain infinitely many log K3 surfaces where integral points are Zariski dense but are not dense in the integral Brauer-Manin sets (see Corollary \ref{notzariski}).

Such a  behaviour had not been yet observed, even in the context of rational points.
  If one allows discussion of density in the real locus, one may  only compare this with the examples of smooth projective surfaces $X/\Bbb Q$ with the property that
the closure of $X(\Bbb Q)$ in $X(\Bbb R)$ does not coincide with a union of connected components of
the real locus $X(\Bbb R)$ \cite[\S 5]{CTSkSD}.

  \bigskip

This  work  was  started in Beijing in November 2017 and  posted on arXiv in August 2018. 
In a preprint posted on arXiv in July 2018,
 D. Loughran and V. Mitankin  \cite{LM} have made an independent study.
 With the restrictions $m, d, md$ not squares,  they   independently solve problem (B).
Their  paper also solves Problem (A),  produces some more types of counterexamples,
and  gives an asymptotic lower bound  for the number of integers $m$   
 giving rise to such counterexamples.
   Our stock of counterexamples enables us
  to produce a slightly better asymptotic lower bound than \cite[Theorem 1.5]{LM}.

 With the same restriction that $m, d, md$ are not squares, towards Problem (C), Loughran and Mitankin
 establish the beautiful result that the only possible examples with
 $\mathcal{U}_{m}(A_{\Bbb Z}) \neq \emptyset$ and
 $\mathcal{U}_{m}(A_{\Bbb Z})^{\Br}  = \emptyset$ satisfy that the class of $d=m-4$
 in $\Bbb Q^{*}/\Bbb Q^{*2}$ lies in the subgroup spanned by $\pm1, 2, 3, 5$.
 This finiteness result,
 which is in the spirit of the finiteness of exceptional spinor classes
 in the  study of the representation of an integer by a ternary quadratic form (see \cite[Remark 7.11]{CTXu09}),
 explains why the examples in \cite{GS} based on the quadratic reciprocity law were of a rather special type.
It is used in \cite{LM}  to show that there are indeed far less values of $m$
 with Brauer-Manin counterexamples than the  number of values of $m$ predicted by \cite{GS}
 for counterexamples to the integral Hasse principle.

\bigskip

{\bf Notation} Let $k$ be a field and $\overline k$ a separable closure of $k$.
We let $g=g_{k}=\Gal({\overline k}/k)$ be the absolute Galois group.
A $k$-variety is a separated $k$-scheme of finite type.
If $X$ is a $k$-variety, we write $\overline{X}=X \times_{k} \overline{k}$.
We let $k[X]=H^0(X,O_{X})$ and $\overline{k}[X]= H^0({\overline X},O_{\overline X})$.
If $X$ is an integral $k$-variety, we let $k(X)$ denote the function field of $X$.
If $X$ is a geometrically integral $k$-variety, we let $\overline{k}(X)$ denote
the function field of $\overline{X}$.
We  let $\Pic(W)=H^1_{Zar}(W,\BG_{m})=
H^1_{\et}(W,\BG_{m})$ denote the Picard group of a scheme $W$. We let
$\Br(W)=H^2_{\et}(W,\BG_{m})$ denote the Brauer group of  a scheme $W$.
Suppose $W$ is a smooth integral $k$-variety.  The natural map $\Br(W) \to \Br(k(W))$ is injective,
hence $\Br(W) $ is a torsion group. An element of $\Br(k(W))$ whose order is prime to
the characteristic of $k$ belongs to $\Br(W)$ if and only its
 residues at all codimension 1 points of $W$ vanish.
We let $$\Br_{1}(X) = \Ker [\Br(X) \to \Br({\overline X})]$$ denote the algebraic Brauer group
of a $k$-variety $X$ and we let $\Br_{0}(X) \subset \Br_{1}(X)$ denote the image of
$\Br(k) \to \Br(X)$. The image of $\Br(X) \to \Br({\overline X})$ is sometimes referred to
as the ``transcendental Brauer group'' of $X$.

Given a field $F$ of characteristic zero containing a primitive $n$-th root of unity $\zeta=\zeta_{n}$,
  we have $H^2(F,\mu_{n}^{\otimes 2}) = H^2(F, \mu_{n}) \otimes \mu_{n}.$
  The choice of $\zeta_{n}$ then defines an isomorphism
  $\Br(F)[n] = H^2(F,\mu_{n}) \cong H^2(F,\mu_{n}^{\otimes 2})$.
  Given two elements
 $f,g \in F^{\times}$, they have classes  $(f)$ and $(g)$ in $F^{\times}/F^{\times n}= H^1(F,\mu_{n})$.
 One denotes  $(f,g)_{\zeta} \in \Br(F)[n]=H^2(F,\mu_{n})$
 the class corresponding to the cup-product $(f) \cup (g) \in H^2(F,\mu_{n}^{\otimes 2})$.
 Suppose $F/E$ is a finite Galois extension with Galois group $G$.
 Given $\sigma \in G$ and $f,g \in F^{\times}$, we have
 $\sigma((f,g)_{\zeta_{n}})= (\sigma(f),\sigma(g))_{\sigma(\zeta_{n})} \in \Br(F)$.
 In particular, if $\zeta_{n} \in E$, then  $\sigma((f,g)_{\zeta_{n}})= (\sigma(f),\sigma(g))_{\zeta_{n}}$.
 For all this, see \cite[\S 4.6, \S 4.7]{GS06} and in particular \cite[Prop. 4.7.1]{GS06}.

  Let $R$ be a discrete valuation ring with field of fractions $F$ and residue field $\kappa$.
   Let $v$ denote the valuation $F^{\times} \to \Bbb Z$.
   Let $n>1$ be an integer invertible in $R$. Assume $F$ contains a primitive $n$-th
   root of unity $\zeta$.   For $f,g, \in F^{\times}$, we have  the residue map
   $$\partial_{R} : H^2(F, \mu_{n}) \to H^1(\kappa,\Bbb Z/n) \cong H^1(\kappa,\mu_{n}) = \kappa^{\times}/\kappa^{\times n},$$
   where $H^1(\kappa,\Bbb Z/n) \cong H^1(\kappa,\mu_{n}) $ is induced by the isomorphism
   $\Bbb Z /n \simeq \mu_{n}$ sending $1$ to 
   $\zeta$.
   This map sends the class of $(f,g)_{\zeta} \in \Br(F)[n]=H^2(F, \mu_{n})$ to
   \begin{equation}{\label{explicitresidue}}
(-1)^{v(f)v(g)}  \ {\rm class}(g^{v(f)}/f^{v(g)}) \in \kappa^{\times}/\kappa^{\times n}.
   \end{equation}
For a proof of these well known facts, see \cite{GS06}. Here are precise references.
Residues in Galois cohomology with finite coefficients are defined in   
 \cite[Construction 6.8.5]{GS06}. Comparison of residues in 
 Milnor $K$-Theory and Galois cohomology is given
 in \cite[Prop. 7.5.1]{GS06}. The explicit formula for the residue
 in Milnor's group $K_{2}$ of a discretely valued field
  is given in \cite[Example 7.1.5]{GS06}.

\bigskip

{\bf Structure of the paper}

Let $k$ be a field of characteristic zero.
 Let $m \in k $. Assume $m(m-4) \neq 0$.
Let $X_{m} \subset {\Bbb P}^3_{k}$ be the smooth cubic surface defined by the projective equation
$$t(x^2+y^2+z^2)-xyz= mt^3.$$
Let $U=U_{m} \subset X_{m}$ be the smooth affine cubic surface defined by the affine equation
$$x^2+y^2+z^2-xyz= m.$$

 In \S 2, we study the Galois modules $\Pic(\overline X_{m}), \Pic(\overline U_{m}), \Br(\overline U_{m})$.
 We show $\Br(\overline U_{m}) \simeq \Bbb Q/\Bbb Z (-1)$.
 In \S 3, we compute $\Br(X_{m})=\Br_{1}(X_{m})$ and the algebraic part $\Br_{1}(U_{m})$ of $\Br(U_{m})$.
 In \S 4, we compute  the transcendental part of $\Br(U_{m})$, namely the quotient $\Br(U_{m})/\Br_{1}(U_{m})$.
We then turn to the case $k=\Bbb Q$ and $m$ is an integer.
In \S 5, we show how to compute the integral Brauer-Manin obstruction for the
affine scheme $\mathcal{U}_{m}$ over $\Bbb Z$ defined by $x^2+y^2+z^2-xyz= m.$
 We then show that the counterexamples to the integral Hasse principle for $\mathcal{U}_{m}$  in \cite{GS}
may all be explained by a combination of integral Brauer-Manin obstruction and
reduction theory. We increase the stock of such counterexamples, thus leading to an improvement
on a counting result in \cite{LM}. In \S 6, we prove that strong approximation never holds
for Markoff type surfaces. 
Section \S 7 is an appendix giving the structure of
the real locus $U_{m}(\Bbb R)$ depending on the value of $m\in \Bbb R$.

\section{Computation of Brauer groups I, general setting}

\begin{prop}\label{br-tran}
Let $X$ be a smooth, projective, geometrically rational surface over a field $k$ of characteristic zero.
Suppose that $U$ is an open subset of $X$ such that $X\setminus U$ is
  the union of three distinct
$k$-lines, by which we mean a smooth projective curve  isomorphic to ${\bf P}_k^1$.
Suppose any two lines intersect each another transversely in one point, and
that the  
  three  intersection points are distinct.
Let  $L$ be  one of the three lines and $V \subset L$ be the complement of
the 2  intersection points of $L$ with the other two lines. Then
the residue map $$\partial_{L}: \ \Br(\bar k(X)) \to H^1(\bar k(L),\Bbb Q/\Bbb Z)$$
induces a $g$-isomorphism
 $$\Br(\overline U) \xrightarrow{\cong}  H^1(\overline V,\Bbb Q/\Bbb Z) \simeq  H^1(\overline {\Bbb G}_{m}, \Bbb Q/\Bbb Z) \simeq \Bbb Q/\Bbb Z(-1).$$
  \end{prop}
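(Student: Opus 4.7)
The plan is to compute $\Br(\overline U)$ by peeling off the three lines $L_{1},L_{2},L_{3}$ one at a time via the Gysin (purity) long exact sequence in \'etale cohomology, arranged so that removing $L_{1}$ and the affine piece of $L_{2}$ contributes nothing, while the final removal of $V\cong \BG_{m}$ inside $L_{3}=L$ accounts for all of $\BQ/\BZ(-1)$. Set $U_{1}=X\setminus L_{1}$ and $U_{2}=X\setminus (L_{1}\cup L_{2})$. Then $U_{2}$ is smooth, $L_{2}\cap U_{1}=L_{2}\setminus\{L_{1}\cap L_{2}\}$ is a smooth divisor in $U_{1}$ isomorphic to $\BA^{1}$, and $V=L_{3}\cap U_{2}$ is a smooth divisor in $U_{2}$ isomorphic to $\BG_{m}$. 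Because $\overline X$ is a smooth projective geometrically rational surface, one has $\Br(\overline X)=0$ and $H^{i}(\overline X,\mu_{n})=0$ for $i=1,3$ and every $n\geq 1$ (reduce to $\BP^{2}$ via blow-up invariance).

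Applying the Gysin sequence to $L_{1}\cong \BP^{1}\hookrightarrow \overline X$ in degrees $2$ and $3$, using $H^{1}(\overline{L_{1}},\BZ/n)=0$ and the fact that the pushforward $H^{2}(\overline{L_{1}},\BZ/n)\to H^{4}(\overline X,\mu_{n})$ is injective (it sends the fundamental class of a point in $L_{1}$ to the nonzero class of a point in the one-dimensional group $H^{4}(\overline X,\mu_{n})$), one obtains $\Br(\overline{U_{1}})[n]=0$ and $H^{3}(\overline{U_{1}},\mu_{n})=0$. A second Gysin sequence for $L_{2}\cap U_{1}\hookrightarrow U_{1}$, together with $H^{i}(\BA^{1}_{\overline k},\BZ/n)=0$ for $i\geq 1$, then yields $\Br(\overline{U_{2}})[n]=0$ and $H^{3}(\overline{U_{2}},\mu_{n})=0$.

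For the final step, the Gysin sequence for the smooth divisor $V\hookrightarrow U_{2}$ reads
\[
H^{0}(\overline V,\BZ/n)\to H^{2}(\overline{U_{2}},\mu_{n})\to H^{2}(\overline U,\mu_{n})\xrightarrow{\partial_{V}} H^{1}(\overline V,\BZ/n)\to H^{3}(\overline{U_{2}},\mu_{n})=0,
\]
and $\partial_{V}$ coincides, at the generic point, with the residue $\partial_{L}$ along $L=L_{3}$. Every divisor class on $\overline U$ is the restriction of one on $\overline{U_{2}}$, so the composite $\Pic(\overline U)/n\to H^{2}(\overline U,\mu_{n})\xrightarrow{\partial_{V}} H^{1}(\overline V,\BZ/n)$ vanishes; hence $\partial_{V}$ descends, via the Kummer identification $\Br(\overline U)[n]=H^{2}(\overline U,\mu_{n})/(\Pic(\overline U)/n)$, to a map $\Br(\overline U)[n]\to H^{1}(\overline V,\BZ/n)$. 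Surjectivity is immediate from $H^{3}(\overline{U_{2}},\mu_{n})=0$; injectivity follows because $\Br(\overline{U_{2}})[n]=0$ forces $H^{2}(\overline{U_{2}},\mu_{n})=\Pic(\overline{U_{2}})/n$, so any class in the kernel of $\partial_{V}$ on $H^{2}(\overline U,\mu_{n})$ comes from a divisor on $\overline{U_{2}}$ and thus becomes trivial in $\Br(\overline U)[n]$. Galois equivariance is automatic throughout. Passing to the colimit over $n$ and using Kummer theory on $\BG_{m}$ to identify $H^{1}(\overline V,\BQ/\BZ)\cong \BQ/\BZ(-1)$ as a $g$-module completes the proof.

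The main technical obstacle is securing the simultaneous vanishing $\Br(\overline{U_{2}})=H^{3}(\overline{U_{2}},\mu_{n})=0$, which is precisely what forces the residue at the single remaining line $L_{3}$ to carry the whole Brauer group of $\overline U$; once this is in place, the rest is bookkeeping with purity, the Kummer sequence, and the Picard quotient.
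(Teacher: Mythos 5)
Your proof is correct, and it takes a genuinely different route from the paper. The paper invokes the Bloch--Ogus (Gersten) complex for $\overline X$, proves its exactness at the middle term via the coniveau spectral sequence and the vanishing $H^3_{\et}(\overline X,\BQ/\BZ(1))=0$, and then compares the complexes for $\overline X$ and $\overline U$ to get an exact sequence
$$0\to \Br(\overline U)\to \oplus_{i=1}^3 H^1_{\et}(\overline V_i,\BQ/\BZ)\to \oplus_{i=1}^3 H^0(\bar k(P_i),\BQ/\BZ(-1));$$
unwinding the $\BG_m$ case then shows \emph{each} residue map $\Br(\overline U)\to H^1(\overline V_i,\BQ/\BZ)$ is an isomorphism. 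By contrast, you peel the lines off one at a time with Gysin/purity sequences, verifying at each stage both $\Br(\overline{U_i})[n]=0$ and $H^3(\overline{U_i},\mu_n)=0$ (keyed to the surjectivity $\Pic(\overline{U_i})\to\Pic(\overline{U_{i+1}})$ and the isomorphism $H^2(\overline{L_1},\BZ/n)\xrightarrow{\sim}H^4(\overline X,\mu_n)$), so that the final Gysin boundary at $V$ carries the whole of $\Br(\overline U)$. Both arguments rest on the same underlying inputs --- absolute cohomological purity, the Kummer sequence, and the vanishing of $H^1(\overline X,\mu_n)$ and $H^3(\overline X,\mu_n)$ for a smooth projective rational surface --- but yours is more hands-on and avoids invoking the Bloch--Ogus theorem, at the cost of breaking the natural symmetry of the three lines (you must choose which line goes last, whereas the paper's sequence treats all three boundary residues simultaneously and exhibits the compatibility relation among them for free).
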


\begin{proof}  Since $X$ is smooth, the homology of the  Bloch-Ogus complex
$$ H^2(\bar k(X), \Bbb Q/\Bbb Z(1)) \to \oplus_{x \in \overline X^{(1)} } H^1(\bar k(x), \Bbb Q/\Bbb Z)  \to    \oplus_{x \in \overline X^{(2)} } H^0(\bar k(x), \Bbb Q/\Bbb Z(-1))$$
at the second term is $H^1_{Zar}(\overline X,{\mathcal H}^2_{\overline X}(\Bbb Q/\Bbb Z(1)) )$ by \cite[(6.1) Theorem]{BO}.  The spectral sequence
$$ E_2^{p,q}=H^p_{Zar}(\overline X, {\mathcal H}^q_{\overline X} (\Bbb Q/\Bbb Z(1))) \Rightarrow H^{p+q}_{\et}(\overline X, \Bbb Q/\Bbb Z(1))  $$
in \cite[(6.3) Corollary]{BO} implies that $H^1_{Zar}(\overline X,{\mathcal H}^2_{X}(\Bbb Q/\Bbb Z(1)) )$ is a subgroup of $H^3_{\et}(\overline X, \Bbb Q/\Bbb Z(1))$.
Since $$H^1_{\et}(\overline X,\mu_{n})= \Pic(\overline X)[n]=0$$ for all $n>0$ by the Kummer sequence, one has $$H^3_{\et}(\overline X, \Bbb Q/\Bbb Z(1))= \varinjlim_{n} H^3_{\et}(\overline X,\mu_{n})=0$$ by Poincar\'e duality.
Therefore the above Bloch-Ogus complex is exact.

 Since $X$
is a smooth,
projective,
 geometrically rational surface, $\Br(\overline X)=0$ and the following diagram of exact sequences
$$ \xymatrix{
& \Br(\overline X)=0  \ar[r] & H^2(\bar k(X), \Bbb Q/\Bbb Z(1)) \ar[r] \ar[d]_{\simeq}   & \oplus_{x \in \overline X^{(1)} } H^1(\bar k(x), \Bbb Q/\Bbb Z)  \ar[d] \\
0 \ar[r] & \Br(\overline U)  \ar[r] & H^2(\bar k(U), \Bbb Q/\Bbb Z(1)) \ar[r] & \oplus_{x\in \overline U^{(1)}} H^1(\bar k(x), \Bbb Q/\Bbb Z)  }$$
commutes by \cite[(3.9)]{CT}.
Let $\{L_{1}, L_{2}, L_{3}\}$ be the set of three lines in $X\setminus U$ and let $\{P_1, P_2, P_3\}$ be the set of three intersection points of $L_1, \ L_2$ and $L_3$ such that $P_i \not \in L_i$ for $1\leq i\leq 3$. Set $$V_i=L_i \setminus \{ P_j\}_{j\neq i}\simeq_k \Bbb G_m$$ for $1\leq i\leq 3$. Combining the above diagram  with the above Bloch-Ogus exact sequence yields the following exact sequence, where the maps are given by the residues
$$ 0\rightarrow \Br(\overline U) \rightarrow \oplus_{i=1}^3 H_{\et}^1(\overline V_{i}, \Bbb Q/\Bbb Z) \rightarrow \oplus_{i=1}^3 H^0(\bar k(P_i), \Bbb Q/\Bbb Z(-1)) . $$
For each $i$,  we have $V_{i} \simeq \mathbb G_{m}$.
The residue map induces the following short exact sequence
$$ 0\rightarrow H^1_{\et}(\overline V_i, \Bbb Q/\Bbb Z) \rightarrow \oplus_{j\neq i} H^0_{\et}(\bar k(P_j), \Bbb Q/\Bbb Z(-1))\xrightarrow{\sum_{j\neq i}} \Bbb Q/\Bbb Z \rightarrow 0.$$
After twisting by roots of unity, this simply follows from the exact sequence
$$ 1 \to \overline{k}^\times \to \overline{k}[ \mathbb G_{m}]^\times  \to \mathbb{Z} \oplus    \mathbb{Z}  \to  \mathbb{Z} \to 0$$
induced by the map sending a rational function on $\mathbb G_{m}$ to its
  divisor  at $0$ and at $\infty$.
One thus has
$g$-isomorphisms
$$ \Br(\overline U)  \simeq H_{\et}^1(\overline V_i, \Bbb Q/\Bbb Z) \simeq H^1(\overline {\Bbb G}_{m}, \Bbb Q/\Bbb Z) \simeq \Bbb Q/\Bbb Z(-1) $$ for $1\leq i\leq 3$.
\end{proof}

For cubic surfaces over an   algebraically closed field $k$, one has the following result.

\begin{prop} \label{unit}
Let $X   \subset {\bf P}^3_{k}$ be a smooth, projective, cubic surface over a  field $k$ of characteristic zero.
Suppose a plane ${\bf P}^2_{k}  \subset {\bf P}^3_{k}$ cuts out on $\bar X$ three  lines $L_{1}, L_{2}, L_{3}$ over $\bar k$.
Let $U \subset X$ be the complement of this plane.
 Then the natural map $\bar k^{\times} \to \bar k[U]^{\times}$ is an isomorphism of Galois modules and the natural map
 $$ 0  \to \oplus_{i=1}^3 \Bbb Z L_{i} \to \Pic(\overline X ) \to \Pic(\overline U) \to 0  $$  is an exact sequence of Galois lattices.
 \end{prop}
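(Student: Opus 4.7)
The plan is to deduce both statements from the standard excision exact sequence relating units and Picard groups for the complement of a Weil divisor in a smooth variety. Applied to the pair $\overline U \subset \overline X$, with complement the reduced divisor $L_1 + L_2 + L_3$, one obtains a six-term exact sequence of $g$-modules
$$ 1 \to \bar k[\overline X]^{\times} \to \bar k[\overline U]^{\times} \xrightarrow{\;\partial\;} \bigoplus_{i=1}^3 \Bbb Z \cdot L_i \xrightarrow{\;\iota\;} \Pic(\overline X) \to \Pic(\overline U) \to 0, $$
where $\partial(f) = (\ord_{L_i}(f))_i$ and $\iota(n_1,n_2,n_3) = [n_1L_1+n_2L_2+n_3L_3]$. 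Each map is functorial in the pair, hence $g$-equivariant for the permutation action on $\{L_1,L_2,L_3\}$. Since $\overline X$ is projective and geometrically integral, $\bar k[\overline X]^{\times} = \bar k^{\times}$. Thus both assertions of the proposition are equivalent to the single claim that $\iota$ is injective.

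For that I would use the intersection pairing on $\Pic(\overline X)$. Each line on a smooth cubic surface is a $(-1)$-curve, so $L_i\cdot L_i = -1$; and by hypothesis the three coplanar lines meet pairwise transversely in a single point, so $L_i\cdot L_j = 1$ for $i\ne j$. Intersecting a purported relation $n_1L_1+n_2L_2+n_3L_3 \equiv 0$ in $\Pic(\overline X)$ against each $L_j$ yields a linear system with matrix
$$ M = \begin{pmatrix} -1 & 1 & 1 \\ 1 & -1 & 1 \\ 1 & 1 & -1 \end{pmatrix}, \qquad \det M = 4. $$
Since $\Pic(\overline X) \cong \Bbb Z^{7}$ is torsion-free for a smooth cubic surface, nonvanishing of $\det M$ forces $(n_1,n_2,n_3)=0$, so $\iota$ is injective.

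Injectivity of $\iota$ simultaneously shows $\partial = 0$, whence $\bar k[\overline U]^{\times} = \bar k^{\times}$ (the first claim), and identifies $\Pic(\overline U)$ with the cokernel of $\iota$ (the second claim); Galois equivariance has already been built in. I do not anticipate any real obstacle: the essential ingredient is the classical intersection-theoretic computation of $\det M$ together with the torsion-freeness of the geometric Picard group of a smooth cubic surface.
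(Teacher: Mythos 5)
Your argument for the injectivity of $\iota$ is correct and is essentially the one the paper uses (in its ``alternative completion''): the $3\times 3$ intersection matrix $M$ has $\det M = \pm 4 \neq 0$, so the relation $n_1L_1+n_2L_2+n_3L_3\equiv 0$ forces $n_1=n_2=n_3=0$. This, together with the excision sequence and $\bar k[\overline X]^\times=\bar k^\times$, proves that $\bar k^\times\to\bar k[\overline U]^\times$ is an isomorphism and that the displayed sequence is exact as a sequence of $g$-modules.

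However, there is a genuine gap: the proposition asserts more than exactness. The phrase ``exact sequence of Galois \emph{lattices}'' says that all three terms are free $\mathbb{Z}$-modules of finite rank, and in particular that $\Pic(\overline U)$ is torsion-free. Your proof does not address this at all, and it does not follow formally from what you prove: the image of $\iota$ need not be a saturated sublattice of $\Pic(\overline X)$, so the quotient could a priori have torsion. In fact your own computation $\det M = 4$ shows only that any torsion in $\Pic(\overline U)$ is $2$-primary; one still has to rule out $2$-torsion. The paper does this by either (i) identifying $L_1,L_2,L_3$ explicitly in the standard basis $l,e_1,\dots,e_6$ of a blown-up $\mathbb{P}^2$ (Hartshorne V.4.8) and checking the quotient is free, or (ii) observing that a nontrivial $2$-torsion class would force a principal divisor of the form $2D+L_1$, $2D+L_1+L_2$, or $2D+L_1+L_2+L_3$, and then choosing a fourth line $L$ on $X$ meeting $L_1$ but not $L_2,L_3$, whose intersection numbers rule out all three possibilities. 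Torsion-freeness of $\Pic(\overline U)$ is actually used repeatedly later in the paper (to get finiteness of $H^1(g,\Pic(\overline U))$, triviality of $\pi_1(\overline U)$, etc.), so it cannot be dropped.
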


\begin{proof}  We may assume $k=\bar k$.
Let
 $$aL_{1}+bL_{2}+cL_{3}=0 \in \Pic(X)$$ with $a,b, c \in \Bbb Z$. By the assumption that $(L_{i}.L_{i})=-1$ and $(L_{i}.L_{j})=1$ for $i\neq j$, one has
$$-a+b+c=0, \ \  a-b+c=0, \ \  a+b-c=0 . $$  This implies that $a=b=c=0$.

To complete the proof, one only needs to show that $\Pic(U)$ is
torsion free.

 Let $e_1, e_2, \cdots, e_6$ and $l$ be  given by \cite[Chapter V, Proposition 4.8]{Hartshorne}.

Suppose that one of $L_1, L_2$ and $L_3$ is in $\{e_1, \cdots, e_6\}$. Say that $L_1=e_1$. 
Consider the two disjoint sets of classes of lines on $X$ :
$$\{l-e_1-e_i:  \  2\leq i \leq 6 \} \ \ \ \text{and} \ \ \  \{ 2l-\sum_{k\neq i} e_k: \ 2\leq i\leq 6 \}.$$ By inspecting the intersection property of $L_1, L_2, L_3$,
 one sees that
  $L_2$ is in one of these sets, and  $L_3$    is in the other one. 
 Without loss of generality, one can assume that $L_2=l-e_1-e_2$. Then $$L_3= 2l-\sum_{k\neq 2} e_k . $$ By \cite[Chapter V, Proposition 4.8]{Hartshorne}, one concludes that
$\Pic(X)/(\oplus_{i=1}^3 \Bbb Z L_{i}) $ is free.

Otherwise, all $L_1$, $L_2$ and $L_3$ are in $\{l-e_i-e_j:  \  1\leq i <j \leq 6 \}$. Say $$L_1=l-e_1-e_2,  \ \ L_2=l-e_3-e_4 \ \ \text{and} \ \  L=l-e_5-e_6 . $$  Then
$ \Pic(X)/(\oplus_{i=1}^3 \Bbb Z L_{i}) $ is free by \cite[Chapter V, Proposition 4.8]{Hartshorne}.

\medskip

{\it Alternative completion of the proof}
The first argument shows that $L_{1}, L_{2}, L_{3}$ are linearly independent.
It also shows that $k^{\times} = k[U]^{\times}$.
Since the determinant of  the system of equations is $\pm 4$, and $\Pic(X)$ is torsion free,
the only torsion that could exist  in $\Pic(U) $ is 2-primary.
Let us show there is no 2-torsion in $\Pic(U)$. If there was, there would exist a principal
divisor  on $X$ of the shape $2D + L_{1}$,
or $2D+L_{1}+L_{2}$, or $2D+L_{1}+L_{2}+L_{3}$.
By the well known configuration of the
27 lines on a cubic surface, there exists a line $L$ on $X$ which meets $L_{1}$ in one point
and does not meet $L_{2}$ or $L_{3}$. Intersection with $L$ rules out the three possibilities.
\end{proof}

The following corollary applies to number fields and more generally
to function fields of varieties over a number field.

\begin{cor}
Let $k$ be a field of characteristic zero such that in any finite field extension
there are only finitely many roots of unity.
Let $X   \subset {\bf P}^3_{k}$ be a smooth, projective, cubic surface over $k$.
Suppose a plane cuts out on $X$ three nonconcurrent  lines.
Let $U \subset X$ be the complement of the plane section.
Then the quotient $\Br(U)/\Br_0(U)$ is finite.
\end{cor}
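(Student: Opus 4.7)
The plan is to split $\Br(U)/\Br_0(U)$ into an algebraic part $\Br_1(U)/\Br_0(U)$ and a transcendental part $\Br(U)/\Br_1(U)$, and to bound each separately using Propositions~\ref{br-tran} and~\ref{unit}. Since Proposition~\ref{unit} gives $\bar k^\times = \bar k[U]^\times$, the low-degree exact sequence attached to the Hochschild--Serre spectral sequence $H^p(g, H^q_{\et}(\overline U, \mathbb G_m)) \Rightarrow H^{p+q}_{\et}(U, \mathbb G_m)$ simplifies to produce an injection
\[
\Br_1(U)/\Br_0(U) \hookrightarrow H^1(g, \Pic(\overline U)).
\]
On the other hand, by the definition of $\Br_1(U)$ together with Proposition~\ref{br-tran}, the natural map identifies the transcendental part with a subgroup
\[
\Br(U)/\Br_1(U) \hookrightarrow \Br(\overline U)^g \cong (\mathbb Q/\mathbb Z(-1))^g.
\]
It therefore suffices to prove that both $H^1(g, \Pic(\overline U))$ and $(\mathbb Q/\mathbb Z(-1))^g$ are finite.

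The algebraic part is essentially formal. By Proposition~\ref{unit}, $\Pic(\overline U)$ is a finitely generated torsion-free quotient of the rank-$7$ lattice $\Pic(\overline X)$. Because the Galois action on $\Pic(\overline X)$ is determined by the permutation of the $27$ lines on $\overline X$ (a subgroup of $W(E_6)$), it factors through a finite quotient $G$ of $g$, and so does the induced action on $\Pic(\overline U)$. For any finite group $G$ acting on a finitely generated abelian group $M$, the cohomology $H^1(G, M)$ is finite, which yields the desired bound.

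The transcendental part requires a hands-on calculation: I would describe $(\mathbb Q/\mathbb Z(-1))^g = \varinjlim_n (\mu_n^{\otimes -1})^g$ explicitly via the cyclotomic character $\chi_n\colon g \to (\mathbb Z/n)^\times$. A nonzero element of order $n$ corresponds to a surjection $\phi\colon \mu_n \twoheadrightarrow \mathbb Z/n$, and the Galois equivariance condition $\phi(\sigma\zeta) = \phi(\zeta)$ becomes $\chi_n(\sigma)\phi(\zeta) = \phi(\zeta)$ for all $\sigma \in g$ and $\zeta \in \mu_n$. If $\mu(k)$ is finite of order $N$, this forces the image of $\phi$ to lie in the unique subgroup of $\mathbb Z/n$ of order $\gcd(n,N)$, giving the uniform bound $|(\mu_n^{\otimes -1})^g| \le N$. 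Passing to the limit, $(\mathbb Q/\mathbb Z(-1))^g$ is finite of order at most $N$. This last computation is the main (mild) obstacle and is precisely where the hypothesis on $k$ enters, via its consequence that $|\mu(k)| < \infty$.
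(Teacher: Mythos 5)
Your argument for the algebraic part is correct and matches the paper: $\Pic(\overline U)$ is a lattice (by Proposition~\ref{unit}) with Galois action factoring through a finite quotient, so $H^1(g,\Pic(\overline U))$ is finite, and the Hochschild--Serre sequence embeds $\Br_1(U)/\Br_0(U)$ into it. The transcendental part, however, has a genuine gap.

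You write $\Br(\overline U)^g \cong (\mathbb Q/\mathbb Z(-1))^g$ and then bound the right-hand side by $|\mu(k)|$. But the isomorphism $\Br(\overline U)\cong \mathbb Q/\mathbb Z(-1)$ of Proposition~\ref{br-tran} is built from the residue map $\partial_L$ at one of the three lines $L$, and it is Galois-equivariant only for the subgroup of $g$ fixing $L$ (and the two intersection points on it). In Proposition~\ref{br-tran} the lines are hypothesised to be $k$-lines, so that subgroup is all of $g$; but the Corollary only assumes a plane cuts out three (nonconcurrent) lines, which may be Galois-conjugate over $k$. In that case $\partial_L$ is only $g_K$-equivariant for the finite extension $K/k$ over which $L$ is defined, and what one actually gets is $\Br(\overline U)^g \subset \Br(\overline U)^{g_K}\cong (\mathbb Q/\mathbb Z(-1))^{g_K}$, whose order is controlled by $\mu(K)$, not $\mu(k)$. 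This is exactly where the full strength of the hypothesis is needed --- and why the statement assumes that \emph{every finite extension} of $k$ has finitely many roots of unity, not merely $k$ itself. The two are not equivalent: for instance $k=\mathbb Q(\mu_{7^\infty})^{\mu_3}$ has only $\pm 1$ as roots of unity, yet its degree-$3$ extension $\mathbb Q(\mu_{7^\infty})$ contains all $7$-power roots of unity; and the maximal totally real subfield of $\mathbb Q(\mu_{\ell^\infty})$ gives a degree-$2$ example. Since a plane section of a cubic surface can split into three conjugate lines over a cubic extension, this is not a hypothetical concern. Your computation of $(\mathbb Q/\mathbb Z(-1))^{g_K}$ via the cyclotomic character is fine; you just need to run it over $K$ rather than $k$, after first passing to $g_K$-invariants, and invoke the stronger hypothesis on $k$ to know $\mu(K)$ is finite.
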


\begin{proof}
Let  $g={\rm Gal}(\overline{k}/k)$ where $\overline{k}$ is an algebraic closure of $k$.
Since $\overline{k}^{\times} = \overline{k}[U]^{\times}$, we have an exact sequence
$$ \Br(k) \to \Ker[\Br(U) \to \Br({\overline U})^g ] \to H^1(g,\Pic({\overline U})) $$
by \cite[Lemma 2.1]{CTXu09}.
Since $\Pic({\overline U})$ is free of finite rank by Proposition \ref{unit},  $H^1(g,\Pic({\overline U}))$ is finite.

  Let $K \subset {\overline k}$ be a field over which one of the three lines, call it $L$,  is defined.
Let $g_{K}={\rm Gal}({\overline k}/K)$.
The isomorphism
$$ \Br({\overline U}) \xrightarrow{\cong} \Bbb Q/\Bbb Z(-1)$$
attached to the line $L$
is $g_{K}$-equivariant.  We thus have
$$ \Br({\overline U})^g \subset \Br({\overline U})^{g_{K}} \simeq \Bbb Q/\Bbb Z(-1)^{g_{K}}$$
Since there are finitely many roots of unity in $K$, the group $\Bbb Q/\Bbb Z(-1)^{g_{K}}$
is finite (use Lemma \ref{twist}). Thus $ \Br({\overline U})^g$ is finite.
 The result  now follows from the above exact sequence.
 \end{proof}

\begin{lem}\label{twist} Let $k$ be a field of characteristic 0.
Let  $g={\rm Gal}(\overline{k}/k)$.
 Let $\mu_{\infty}({\overline k})=\Bbb Q/\Bbb Z(1)$ be the subgroup of roots of unity in ${\overline k}^{\times}$.
  Then
${\mathbb Q}/{\mathbb Z}(-1)^{g} $ is (noncanonically) isomorphic
to   $\mu_{\infty}(k)$, the group of roots of unity in $k$.
\end{lem}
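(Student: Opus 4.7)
My plan is to make the Galois action on $\Bbb Q/\Bbb Z(-1)$ completely explicit through the cyclotomic character and then observe that the defining equation for a fixed point is literally the same equation that cuts out $\mu_\infty(k)$ inside $\mu_\infty(\overline k)$. The noncanonical aspect will enter only through a single choice of abstract isomorphism $\mu_\infty(\overline k)\simeq \Bbb Q/\Bbb Z$.

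First I would fix, once and for all, an isomorphism of abstract abelian groups
$$\iota:\mu_\infty(\overline k)\xrightarrow{\ \sim\ }\Bbb Q/\Bbb Z,$$
equivalently, a compatible system of primitive $n$-th roots of unity. Let $\chi_{\cyc}:g\to\widehat{\Bbb Z}^\times$ denote the cyclotomic character, characterized by $\sigma\zeta=\zeta^{\chi_{\cyc}(\sigma)}$ for all $\zeta\in\mu_\infty(\overline k)$. By definition, $\Bbb Q/\Bbb Z(-1)=\Hom(\mu_\infty(\overline k),\Bbb Q/\Bbb Z)$ with the natural $g$-action, and via $\iota$ this identifies $\Bbb Q/\Bbb Z(-1)$ as an abstract group with $\Bbb Q/\Bbb Z$, on which $\sigma\in g$ acts by multiplication by $\chi_{\cyc}(\sigma)^{-1}$.

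Next I would translate both invariants to subgroups of $\Bbb Q/\Bbb Z$. Under $\iota$, an element $x\in\Bbb Q/\Bbb Z$ lies in $(\Bbb Q/\Bbb Z(-1))^g$ iff $\chi_{\cyc}(\sigma)^{-1}x=x$ for every $\sigma\in g$, equivalently $(\chi_{\cyc}(\sigma)-1)x=0$ in $\Bbb Q/\Bbb Z$ for every $\sigma\in g$. On the other hand, $\zeta\in\mu_\infty(\overline k)$ lies in $\mu_\infty(k)=\mu_\infty(\overline k)^g$ iff $\zeta^{\chi_{\cyc}(\sigma)-1}=1$ for every $\sigma$, which via $\iota$ becomes exactly the same condition $(\chi_{\cyc}(\sigma)-1)\iota(\zeta)=0$ in $\Bbb Q/\Bbb Z$ for every $\sigma$.

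Thus $\iota$ identifies $\mu_\infty(k)$ and $(\Bbb Q/\Bbb Z(-1))^g$ with the same subgroup
$$\{x\in\Bbb Q/\Bbb Z:(\chi_{\cyc}(\sigma)-1)\,x=0\text{ for all }\sigma\in g\}$$
of $\Bbb Q/\Bbb Z$, yielding the desired (noncanonical) isomorphism. There is no real obstacle here; the only item one must be honest about is that the isomorphism depends on the choice of $\iota$, which is precisely the point of the word \emph{noncanonically} in the statement.
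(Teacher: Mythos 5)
Your argument is correct and is in substance the same as the paper's: both proofs boil down to the fact that $\sigma$ acts on $\Bbb Q/\Bbb Z(-1)$ by $\chi(\sigma)^{-1}$, so that an element is fixed precisely when it is annihilated by $\chi(\sigma)-1$ for all $\sigma$, which is the very condition cutting out $\mu_\infty(k)$ inside $\mu_\infty(\overline k)$. The paper packages this as ``$\Bbb Z/n$ is contained in the invariants iff $\mu_n\subset k$,'' while you observe directly that under a chosen identification both invariant groups coincide as subgroups of $\Bbb Q/\Bbb Z$; that is a marginally sharper statement (equality rather than mere isomorphism) but the content is identical.

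One small slip worth correcting: $\Hom(\mu_\infty(\overline k),\Bbb Q/\Bbb Z)$ is the Pontryagin dual and is $\widehat{\Bbb Z}(-1)$, not $\Bbb Q/\Bbb Z(-1)$. The correct definition is $\Bbb Q/\Bbb Z(-1)=\varinjlim_n\Hom(\mu_n,\Bbb Z/n)$, which is indeed abstractly isomorphic to $\Bbb Q/\Bbb Z$ with $\sigma$ acting by $\chi(\sigma)^{-1}$. Since those are exactly the two facts your argument uses, the slip is inessential, but the phrase ``by definition'' should be fixed.
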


\begin{proof}  We only need to show:   $\mathbb Z/n \subset {\mathbb Q}/{\mathbb Z}(-1)^{g} $  holds if and only if $\mu_n\subset k$.

If $\mu_n\subset k$, obviously $\mathbb Z/n \subset {\mathbb Q}/{\mathbb Z}(-1)^g $. On the other hand,
let $a \in {\mathbb Q}/{\mathbb Z}(-1)$ be of order $n$. For any $\sigma\in g$, then $\sigma(a) =\chi(\sigma)^{-1}a$, here $\chi$ is the cyclotomic character. Therefore, if $a$ is a fixed point, then $(\chi(\sigma)-1)a =0$ for any $\sigma \in g$, i.e., $\chi(\sigma)-1\equiv 0 \mod n$. This implies $\mu_n\subset k$.
\end{proof}

\section{Computation of Brauer groups II, algebraic parts}

For Markoff surfaces,
one can further compute the algebraic part of Brauer groups explicitly by using the equations.

\begin{lem} \label{lines}
 Let $k$ be a field of characteristic zero and $\overline k$   an algebraic closure of $k$.
Let $m\in k$ and $d=m-4$.
Let $X_{m} \subset \Bbb P^3_{k}$ be  defined by the equation
$$ t(x^2+y^2+z^2)-xyz=mt^3.$$
  Then $X_{m}$ is smooth over $k$ if and only if
 $md \neq 0$.  If $md\neq 0$, fix a square root $ \sqrt{m} \in \overline{k}$ and a square root  $\sqrt{d} \in \overline{k}$.
 Then the 27 lines on  $\overline{X}_{m}$ are defined over $k(\sqrt{m}, \sqrt{d})$   by the following equations
$$ L_1: \ x=t=0; \ \ \ \ \ \ L_2: \ y=t=0; \ \ \ \ \ \ L_3: \ z=t=0$$
and
$$\begin{cases}  & l_1(\epsilon, \delta) :  \ x= 2\epsilon t, \ y-\epsilon z=\delta \sqrt{d} t \\
& l_2(\epsilon, \delta) : \ y= 2\epsilon t, \ z-\epsilon x= \delta \sqrt{d} t \\
& l_3(\epsilon, \delta): \ z= 2\epsilon t, \ x-\epsilon y =\delta \sqrt{d} t \\
& l_4(\epsilon, \delta): \ x=\epsilon \sqrt{m} t, \ y=\frac{1}{2}(\epsilon \sqrt{m} + \delta \sqrt{d} ) z  \\
& l_5(\epsilon,\delta) : \  y= \epsilon \sqrt{m} t, \ z=\frac{1}{2}(\epsilon \sqrt{m} + \delta \sqrt{d} ) x \\
& l_6(\epsilon, \delta): \ z= \epsilon \sqrt{m} t, \ x=\frac{1}{2} (\epsilon \sqrt{m} + \delta \sqrt{d}) y
\end{cases} $$
 with $\epsilon=\pm 1$ and $\delta = \pm 1$. Moreover, the intersection numbers
 satisfy $$(l_i(\epsilon,\delta). l_j(\epsilon,\delta))=0 $$ for any fixed pair $(\epsilon, \delta)$, whenever $1\leq i\neq j\leq 6$.
\end{lem}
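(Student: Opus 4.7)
The plan has three parts matching the three claims of the lemma --- smoothness, enumeration of the 27 lines, and pairwise disjointness of the six $l_i(\epsilon, \delta)$ --- all by direct computation, leveraging the cyclic symmetry $(x,y,z) \mapsto (y,z,x)$ of the defining equation.

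For smoothness, set $F = t(x^2+y^2+z^2) - xyz - mt^3$ and compute the four partials
\[
\partial_x F = 2xt - yz,\quad \partial_y F = 2yt - xz,\quad \partial_z F = 2zt - xy,\quad \partial_t F = x^2+y^2+z^2 - 3mt^2.
\]
At a singular point, multiplying the first three equations by $x$, $y$, $z$ respectively forces $2x^2 t = 2y^2 t = 2z^2 t = xyz$. I would then split on whether $t = 0$ or not. In the first case, combined with $\partial_t F = 0$, this kills all coordinates in characteristic zero (impossible projectively). In the second, normalizing $t = 1$, either $x = 0$ (forcing $y = z = 0$ and thus $m = 0$), or else $|x| = |y| = |z| = 2$ with $xyz = 8$, which substitutes to $m = 4$. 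Hence $X_m$ is singular iff $md = 0$.

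For the 27 lines, start from the three obvious lines $L_1, L_2, L_3$ cut out by $t = 0$ (where the cubic restricts to $xyz = 0$). For the remaining lines, sweep the pencil of planes containing $L_1$, written $\alpha x + t = 0$; substituting $t = -\alpha x$ factors $F$ as $x$ times the residual conic
\[
Q_\alpha:\ \alpha(m\alpha^2 - 1)\, x^2 - \alpha(y^2 + z^2) - yz = 0.
\]
Its $(x,y,z)$-discriminant vanishes exactly at $\alpha = 0$ (which recovers $L_2 \cup L_3$), $\alpha^2 = 1/4$, and $\alpha^2 = 1/m$. The value $\alpha = -\epsilon/2$ corresponds to the plane $x = 2\epsilon t$, where $Q_\alpha$ factors as $4(y - \epsilon z)^2 = dx^2$, producing the two lines $l_1(\epsilon, \pm 1)$; the value $\alpha = -\epsilon/\sqrt m$ corresponds to $x = \epsilon\sqrt m\, t$, where $Q_\alpha$ reduces to $y^2 - \epsilon\sqrt m\, yz + z^2 = 0$, whose roots $y = \tfrac12(\epsilon\sqrt m + \delta\sqrt d)z$ produce the two lines $l_4(\epsilon, \pm 1)$. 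The cyclic symmetry then replays the analysis for $L_2$ and $L_3$, producing the families $l_2, l_5$ and $l_3, l_6$. This accounts for $3 + 3\cdot 8 = 27$ lines; since a smooth cubic surface carries exactly 27 lines, the list is complete, and visibly every line is defined over $k(\sqrt m, \sqrt d)$.

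For the intersection numbers, for each pair $1 \leq i \neq j \leq 6$ I would substitute the defining equations of $l_i(\epsilon, \delta)$ into those of $l_j(\epsilon, \delta)$, reducing to a single scalar identity in $\sqrt m$ and $\sqrt d$. The only potentially dangerous coincidence is $(\epsilon\sqrt m + \delta\sqrt d)^2 = 4$, but squaring and substituting $d = m - 4$ collapses this to $md = d^2$, i.e.\ $m = d$, i.e.\ $4 = 0$, a contradiction; all other potential equalities between coordinates collapse to $\sqrt d = 0$, again excluded. The main obstacle will be purely organizational, namely the $\binom{6}{2} = 15$ cases to check, but the $\mathbb Z/3$-symmetry on $\{L_1, L_2, L_3\}$ (and the parallel symmetries on $\{l_1, l_2, l_3\}$ and $\{l_4, l_5, l_6\}$) together with the symmetry $(\epsilon, \delta) \mapsto (\epsilon, -\delta)$ cut this down to only a handful of genuinely distinct computations, so the verification is mechanical.
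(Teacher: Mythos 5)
For the smoothness criterion and the enumeration of the $27$ lines, your proposal is sound and follows the same route as the paper's proof: the paper records the singular points at $m=0$ and $m=4$ and then, for $m\neq 0,4$, sweeps the tritangent planes through each $L_i$, exactly as you do; you merely fill in the computations the paper leaves implicit. Two small things to make explicit: the plane $x=0$ must be discarded from the pencil through $L_1$ (on it the residual conic is $y^2+z^2-mt^2$, irreducible for $m\neq 0$), and the completeness argument needs the $27$ listed lines to be pairwise distinct, which deserves a sentence.

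The intersection-number claim is where your sketch goes wrong, and in fact the lemma itself is overstated. Carrying out the substitution for the pair $l_1(\epsilon,\delta)$, $l_2(\epsilon,\delta)$: from $x=2\epsilon t$ and $y=2\epsilon t$ one gets $z=(2-\epsilon\delta\sqrt{d})\,t$, so $z-\epsilon x=-\epsilon\delta\sqrt{d}\,t$, and the compatibility condition $-\epsilon\delta\sqrt{d}=\delta\sqrt{d}$ reduces to $\epsilon=-1$, not to $\sqrt{d}=0$. Thus for $\epsilon=-1$ the two lines actually meet, at the point $(-2:-2:2+\delta\sqrt{d}:1)$, which one checks directly lies on $X_m$ and on both lines; a similar computation shows $l_1(-1,\delta)$ also meets $l_6(-1,\delta)$. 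So your blanket claim that every other potential coincidence collapses to $\sqrt{d}=0$ is false, and the pairwise-disjointness asserted in the lemma fails for $\epsilon=-1$. It does hold for $\epsilon=1$ with either sign of $\delta$ (the dangerous identity $(\sqrt{m}+\delta\sqrt{d})^2=4$ is indeed impossible, by the argument you give), and $\epsilon=1$ is the only case used in the rest of the paper via $l_i:=l_i(1,1)$ and $l_i^-:=l_i(1,-1)$. Note also that the paper's own proof of the lemma never addresses the intersection numbers, so an honest execution of your mechanical check would in fact expose the need to restrict the statement to $\epsilon=1$.
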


\begin{proof}
For $m=4$, the singular points  are
$$(x:y:z:t)=(2\varepsilon : 2\eta : 2 \varepsilon \eta : 1)$$
with $\varepsilon=\pm 1, \eta =\pm 1$.
 For $m=0$, there is only one singular point, namely $(0:0:0:1)$.
Assume $m \neq 0,4$. 
Any line $L$  on $X_{m}$ which is not in the plane $t=0$ meets this plane in one point,
and that point must be on one of the lines $L_{1}, L_{2}, L_{3}$.
Say it is $L_{1}$. The plane containing $L$ and $L_{1}$ is one of the planes
through $L_{1}$ which intersects $X_{m}$ in three lines. Writing down the   planes
through each $L_{i}$ with this property (there are 5 such planes for each $L_{i}$) produces all    lines on $X_{m}$,
which are indeed 27 in number.
 \end{proof}
\medskip

For the  sake of simplicity,   wherever there is no ambiguity, for  each $i=1,\dots,6$ we shall write
$l_{i} = l_{i}(1,1)$ .

\begin{prop}\label{x}
Let $k$ be a field of characteristic zero and
 $m\in k\setminus \{0,4\}$. Set $d=m-4$.
Let $ X_{m} \subset \Bbb P^3_{k}$ be defined by the equation
\begin{equation} \label{equ-hom} t(x^2+y^2+z^2)-xyz=mt^3. \end{equation}
If $[k(\sqrt{m}, \sqrt{d}) : k]=4$, then
$$ \Br(X_{m})/\Br_0(X_{m}) =\Br_1(X_{m})/\Br_0(X_{m}) \cong \Bbb Z/2 $$ with a generator
$$ \{ ((\frac{x}{t})^2-4,d) =((\frac{y}{t})^2-4, d)= ((\frac{z}{t})^2-4, d) \} $$ over $t\neq 0$.

If $d\not\in k^{\times 2}$ and $m\in k^{\times 2}$, then
$$ \Br(X_{m})/\Br_0(X_{m})=\Br_1(X_{m})/\Br_0(X_{m}) \cong (\Bbb Z/2)^2 $$ with two generators $$ \{ ((\frac{x}{t})^2-4,d), \ ((\sqrt{m}-\frac{x}{t})(\frac{x}{t}+2), d) \} $$ over $t\neq 0$.

If $d\in k^{\times 2}$ or $d \cdot m\in k^{\times 2}$, then $\Br(k)= \Br_1(X_{m})=\Br(X_{m})$
\end{prop}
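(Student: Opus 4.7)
The strategy is to (i) reduce to a Galois cohomology calculation, (ii) work it out on the 27 lines, and (iii) realize the nontrivial classes by explicit quaternion symbols. For (i), $X_m$ is a smooth cubic surface, hence geometrically rational, so $\Br(\overline X_m)=0$ and $\Br(X_m)=\Br_{1}(X_m)$. The line $L_{1}$ of Lemma \ref{lines} is $k$-rational, so $X_m(k)\ne\emptyset$, and the Hochschild--Serre spectral sequence (compare \cite[Lemma 2.1]{CTXu09}) yields
\[
\Br_{1}(X_m)/\Br_{0}(X_m)\;\stackrel{\sim}{\longrightarrow}\;H^1(g,\Pic(\overline X_m)).
\]
By Lemma \ref{lines} the Galois action factors through $\Gal(K/k)$ with $K=k(\sqrt m,\sqrt d)$, and the three cases of the proposition correspond precisely to $\Gal(K/k)\cong(\mathbb Z/2)^2$; $\Gal(K/k)=\langle\sigma\rangle\cong\mathbb Z/2$ with $\sqrt m\in k$ and $\sigma:\sqrt d\mapsto-\sqrt d$; and $d$ or $md\in k^{\times 2}$ (in which case $K=k(\sqrt m)$ or $K=k$).

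For (ii) I would fix the exceptional sextuple of pairwise skew lines $e_{i}:=l_{i}(1,1)$, $1\le i\le 6$, provided by the final assertion of Lemma \ref{lines}. Together with a class $l$ (the pullback of a line in $\mathbb P^2$ via the blow-down of these six lines) they give a $\mathbb Z$-basis of $\Pic(\overline X_m)$. Checking by substitution in Lemma \ref{lines} which $e_{i}$ meets each $L_{j}$ yields
\[
L_{1}=l-e_{1}-e_{4},\qquad L_{2}=l-e_{2}-e_{5},\qquad L_{3}=l-e_{3}-e_{6},
\]
and using that $L_{i}$ together with $l_{i}(1,1)$ and $l_{i}(1,-1)$ is coplanar (hence of class $-K_{X_m}=3l-\sum_{j}e_{j}$) expresses every remaining line as one of $l-e_{i}-e_{j}$ or $2l-\sum_{k\ne j}e_{k}$. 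The involutions $\sigma:\sqrt d\mapsto-\sqrt d$ and $\tau:\sqrt m\mapsto-\sqrt m$ act on $\{l,e_{1},\ldots,e_{6}\}$ in a manner directly readable from Lemma \ref{lines}: $\sigma$ swaps $\delta\mapsto-\delta$ on every line, while $\tau$ fixes $e_{1},e_{2},e_{3}$ and swaps $\epsilon\mapsto-\epsilon$ on $e_{4},e_{5},e_{6}$. With this data, $H^1(\Gal(K/k),\Pic(\overline X_m))$ reduces to a finite calculation yielding $\mathbb Z/2$ in Case 1, $(\mathbb Z/2)^2$ in Case 2, and $0$ in Case 3.

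For (iii) I would rewrite the affine identity (\ref{markoff2}) as
\[
(X^2-4)(Y^2-4)\;=\;(2Z-XY)^2-4d\;=\;N_{k(\sqrt d)/k}\bigl(2Z-XY+2\sqrt d\bigr),
\]
so that $((X^2-4)(Y^2-4),d)=0$ in $\Br(k(X_m))$. Being $2$-torsion, this forces $((X^2-4),d)=((Y^2-4),d)$, and by cyclic permutation of $X,Y,Z$ all three symbols coincide; call the common class $\alpha_{1}$. To see $\alpha_{1}\in\Br(X_m)$, compute $\div(X^2-4)=\sum_{\epsilon,\delta}l_{1}(\epsilon,\delta)-2L_{2}-2L_{3}$: the residues at the $l_{1}(\epsilon,\delta)$ vanish because $\sqrt d$ lies in the residue field, and those at $L_{2},L_{3}$ vanish because the coefficient $-2$ is even. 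In Case 2 with $\sqrt m=n\in k$, set $\alpha_{2}:=((n-X)(X+2),d)$; the divisor $\div((n-X)(X+2))=l_{4}(1,1)+l_{4}(1,-1)+l_{1}(-1,1)+l_{1}(-1,-1)-2L_{2}-2L_{3}$ likewise has only residues in residue fields containing $\sqrt d$ or at $L_{2},L_{3}$ with even coefficient, so $\alpha_{2}\in\Br(X_m)$. Matching $\alpha_{1}$ (and $\alpha_{2}$ in Case 2) with nontrivial classes in $H^1(g,\Pic(\overline X_m))$ via the isomorphism of (i), and using the order count of (ii), forces these symbols to generate $\Br_{1}(X_m)/\Br_{0}(X_m)$.

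The principal obstacle is the bookkeeping in step (ii): expressing all 24 non-triangle lines in the basis $\{l,e_{1},\ldots,e_{6}\}$, tabulating the action of $\sigma,\tau$ on this basis, and computing the Galois cohomology in each case. Step (iii) is clean once the divisor and residue computations are written down, and the final matching requires only that both sides have equal, already-computed order.
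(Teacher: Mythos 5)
Your steps (i) and (ii) follow the paper's route: Hochschild--Serre reduces to $H^1(k,\Pic(\overline{X}_m))$, and you then compute this group on the basis $\{l,e_1,\dots,e_6\}$; the paper expresses the Galois images of the $e_i$ via intersection numbers and Hartshorne V.4.8/4.9, whereas you propose reading them off from coplanarity relations and the anticanonical class $-K_{X_m}=3l-\sum e_j$ — this is equivalent bookkeeping and gives the same answer. Your step (iii) correctly shows the symbols $\alpha_1=(X^2-4,d)$ and, in the $m\in k^{\times 2}$ case, $\alpha_2=((\sqrt m-X)(X+2),d)$ are unramified on $X_m$, so they lie in $\Br(X_m)$.

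The genuine gap is at the very end of step (iii): you assert that ``matching'' $\alpha_1$ (and $\alpha_2$) with nontrivial classes of $H^1(g,\Pic(\overline{X}_m))$ ``forces these symbols to generate,'' but you give no mechanism to verify that $\alpha_1\notin\Br_0(X_m)=\Br(k)$, nor that $\alpha_1,\alpha_2$ are independent modulo $\Br(k)$. Lying in $\Br(X_m)$ says nothing about nonconstancy, and the order count of step (ii) only tells you how large the target group is; it does not tell you where your two explicit symbols land inside it. The identification $\Br_1(X_m)/\Br_0(X_m)\cong H^1(g,\Pic(\overline{X}_m))$ is not something you can evaluate by inspection on a quaternion class. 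The paper fills exactly this hole with a separate argument: it considers the conic fibration $\pi\colon U_m\to\mathbb A^1$, $(x,y,z)\mapsto x$, whose generic fibre is a conic with $\ker(\pi_\eta^*)=(x^2-4,m-x^2)$, and then invokes the Faddeev exact sequence over $k(x)$ to show that $\pi_\eta^*(x^2-4,d)$ (resp.\ $\pi_\eta^*((\sqrt m-x)(x+2),d)$, resp.\ their product) has nontrivial residues distinct from those of the kernel generator, hence is nonconstant. Without this or an equivalent argument — e.g.\ evaluating the symbols at well-chosen local points, or explicitly computing the cocycle they induce — your proof establishes containment but not generation, so the proposition remains unproved in the two nontrivial cases.
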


\begin{proof} For ease of notation, we set $X=X_{m}$.
Since $X$ is geometrically rational, one has $\Br(X)=\Br_1(X)$. One clearly has $X(k)\neq \emptyset$.
By the Hochschild-Serre spectral sequence (see \cite[Lemma 2.1]{CTXu09}), one has an isomorphism
 \begin{equation} \label{b-x} \Br_1(X)/\Br_0(X) \simeq H^1(k, \Pic({\overline X})) . \end{equation}
 
  By  Lemma \ref{lines}, the six lines $l_i, i=1,\dots,6$ on the cubic surface ${\overline X}$  are skew to one another, hence may be simultaneously blown down
 to $\BP^2$ (see \cite[Chapter V, Proposition 4.10]{Hartshorne}).
The class $\omega$ of the canonical bundle on ${\overline X}$
is equal to $-3l + \sum_{i=1}^6 l_{i}$, where $l$ is the inverse image of the class of lines in $\BP^2$.
 We have the following intersection properties:
  $(l.l)=1$ and  $(l.l_i)=0$ for $1\leq i\leq 6$. The classes $l$ and $l_{i}, i=1,\dots,6$ form a basis of $\Pic({\overline X})$.

  Since
 $$ (L_j. l_{i}) =\begin{cases} 1 \ \ \ & \text{$i-j \equiv 0$ or $3 \mod 6$} \\
 0 \ \ \ & \text{otherwise} \end{cases} $$
where $L_j$ are the lines in Lemma \ref{lines} with $1\leq j\leq 3$ and $1\leq i\leq 6$, one concludes that
\begin{equation}\label{l} L_j= l-l_j -l_{j+3} \end{equation} in $\Pic({\overline X})$ for $1\leq j\leq 3$ by \cite[Chapter V, Proposition 4.8 (e)]{Hartshorne}.  

\bigskip

 (1) Suppose $d\not\in k^{\times 2}$ and $md \not\in k^{\times 2}$.

There is $\sigma \in \Gal(k(\sqrt{d}, \sqrt{m})/k)$ such that $$\sigma(\sqrt{d})=-\sqrt{d} \ \ \ \text{and} \ \ \  \sigma(\sqrt{m})=\sqrt{m} . $$
Since the intersection numbers
\begin{equation}\label{int} (\sigma l_j(1,1). l_{i}(1,1))=(l_j(1,-1). l_i(1,1))=\begin{cases}  0 \ \ \ & \text{$i=j+3$} \\
1 \ \ \ & \text{$ i\neq j+3 $} \end{cases} \end{equation} and
\begin{equation}\label{int1} (\sigma l_{3+j}(1,1). l_i(1,1))=(l_{3+j}(1,-1). l_i(1,1)) =\begin{cases} 0 \ \ \ & \text{$i=j$} \\
1 \ \ \ & \text{$ i\neq j$} \end{cases} \end{equation} for $1\leq j\leq 3$,  one obtains
 \begin{equation} \label{rep}\sigma l_j= 2l-\sum_{i\neq j+3} l_i  \ \ \ \text{and} \ \ \ \sigma l_{3+j}= 2l -\sum_{i\neq j} l_i  \end{equation} in $\Pic({\overline X})$ by \cite[Chapter V, Theorem 4.9]{Hartshorne} for $1\leq j\leq 3$. This implies that
 \begin{equation} \label{rep1} \sigma l= 5l-2 \sum_{i=1}^6 l_i \end{equation} by (\ref{l}). Then
 \begin{equation} \label{sigma} \ker(1+\sigma) = \langle (l-l_1-l_2-l_3),  (l_1-l_4),  (l_2-l_5),  (l_3-l_6)  \rangle \end{equation}
and \begin{equation}\label{sigma-im} (1-\sigma) \Pic({\overline X}) = \langle 2(l-l_1-l_2-l_3),  (l_1-l_4+l_3-l_6),  (l_2-l_5-l_3+l_6),  (l_2-l_5+l_3-l_6)  \rangle \end{equation}  by (\ref{rep}), (\ref{rep1}).

\bigskip

Given a finite cyclic group $G=\langle\sigma\rangle$ and a $G$-module $M$, recall that we have
isomorphisms $H^1(G,M) \cong \hat{H}^{-1}(G,M)$, where the latter group is the quotient of
${}^{N_{\sigma}}(M)$, the set of elements of $M$ of norm $0$, by its subgroup $(1-\sigma)M$.

\medskip
(1a) Suppose
$d \notin k^{\times 2}$
and $m \in k^{\times 2}$.  Then
$$ H^1(k, \Pic({\overline X}))= H^1(\langle\sigma\rangle, \Pic({\overline X})) \simeq \hat{H}^{-1}(\langle\sigma\rangle, \Pic({\overline X}))  \cong (\Bbb Z/2)^2$$
by \cite[(1.6.6) and (1.6.12) Proposition]{NSW} and (\ref{sigma}) and (\ref{sigma-im}).

\bigskip

(2) Suppose  $m\not\in k^{\times 2}$ and $md \not\in k^{\times 2}$.

There is $\tau\in \Gal(k(\sqrt{d}, \sqrt{m})/k)$ such that  $$\tau(\sqrt{m})=-\sqrt{m} \ \ \ \text{and} \ \ \ \tau(\sqrt{d})=\sqrt{d} . $$ Since the intersection numbers
 \begin{equation} \label{int2} (\tau l_{j+3}(1,1). l_i(1,1)) = (l_{j+3}(-1, 1). l_i(1,1)) = \begin{cases}  0 \ \ \ & \text{$ 1\leq i\leq 3$ and $i=j+3$} \\
1 \ \ \ & \text{$4\leq i\leq 6$ and $i\neq j+3$}  \end{cases} \end{equation}
for $1\leq j\leq 3$, one obtains
\begin{equation} \label{rep-t} \tau l_{j+3} = l-\sum_{4\leq i\neq j+3\leq 6} l_i  \end{equation}  in $\Pic({\overline X})$ by \cite[Chapter V, Theorem 4.9]{Hartshorne} for $1\leq j\leq 3$. This implies that
\begin{equation} \label{rep-t1} \tau l=2l-\sum_{i=4}^6 l_i \end{equation} by (\ref{l}).   Then
\begin{equation} \label{tau} \ker(1+\tau)= \langle l-l_4-l_5-l_6 \rangle  \ \ \ \text{and} \ \ \  \ker(1-\tau) = \langle l_1, l_2, l_3, (l-l_4), (l-l_5), (l-l_6) \rangle \end{equation}
and
\begin{equation}\label{im} (1-\tau) \Pic({\overline X}) = \langle l-l_4-l_5-l_6 \rangle \end{equation} by (\ref{rep-t}), (\ref{rep-t1}).

\medskip

(2a)
If $m \notin  k^{\times 2}$ and $d\in k^{\times 2}$,
then
$$ H^1(k, \Pic({\overline X}))=H^1(\langle\tau\rangle, \Pic({\overline X})) \simeq \hat{H}^{-1}(\langle\tau\rangle, \Pic({\overline X})) =0$$
by \cite[(1.6.6) and (1.6.12) Proposition]{NSW} and (\ref{tau}) and (\ref{im}).

If  $d\in k^{\times 2}$ and
 $m\in   k^{\times 2}$, then we also have
$ H^1(k, \Pic({\overline X}))=0$. Indeed, in that case all 27 lines are defined over $k$
and the action of the Galois group on $\Pic({\overline X})$ is the trivial action.

\bigskip

(3) Suppose that none of $d$, $m$, $dm$ is a square, that is
$[k(\sqrt{m}, \sqrt{d}) : k]=4$.

 Then
$$ H^1(k, \Pic({\overline X}))= H^1(G, \Pic({\overline X})) $$ by \cite[(1.6.6) Proposition]{NSW}, where $G=\Gal(k(\sqrt{m}, \sqrt{d})/k)$. Let $\sigma, \tau\in G$ be as above. Then one has the following exact sequence
$$ 0\rightarrow H^1(\langle \sigma\rangle, \Pic ({\overline X})^{\langle\tau\rangle}) \rightarrow H^1(G, \Pic({\overline X})) \rightarrow H^1(\langle\tau\rangle, \Pic({\overline X})) =0$$
by  \cite[(1.6.6) and (1.6.12) Proposition]{NSW} and (\ref{tau}) and (\ref{im}). Since
$$ \ker(1+\sigma) \cap \Pic({\overline X})^{\langle \tau\rangle} = \langle  (l-l_4-l_2-l_3),  (l-l_5-l_1-l_3),  (l-l_6-l_1-l_2)  \rangle$$ by (\ref{sigma}), (\ref{tau}) and
$$ (1-\sigma) \Pic({\overline X})^{\langle \tau\rangle} = [(1-\sigma )\Pic({\overline X})]\cap \Pic({\overline X})^{\langle \tau\rangle} $$
$$= \langle (2l-l_1-2l_2-l_3-l_4-l_6), (l_2-l_3-l_5+l_6), (2l-2l_1-l_2-l_3-l_5-l_6)\rangle $$
by (\ref{rep}), (\ref{rep1}), (\ref{sigma-im}), (\ref{tau}) and (\ref{im}), one concludes that
$$H^1(k, \Pic({\overline X}))= [\ker(1+\sigma) \cap \Pic({\overline X})^{\langle \tau\rangle}] / [(1-\sigma) \Pic({\overline X})^{\langle \tau\rangle}] \cong \Bbb Z/2 . $$

\bigskip

(4) Suppose $m, d \notin k^{\times 2}$ and $md \in  k^{\times 2}$,
i.e. $k(\sqrt{m})=k(\sqrt{d})\neq k$.

 Let $\rho$ be the generator of $\Gal(k(\sqrt{m})/k)$. Computing the intersection numbers $$ (\rho l_{j+3}(1,1). l_i(1,1))=(l_{j+3}(-1,-1). l_i(1,1)) =\begin{cases} 1 \ \ \ & \text{$1\leq i\neq j \leq 3$} \\
0 \ \ \ & \text{otherwise} \end{cases} $$ for $1\leq j\leq 3$,
one obtains  
\begin{equation} \label{rho} \rho l_{j+3} = l-\sum_{1\leq i\neq j\leq 3} l_i  \end{equation}  for $1\leq j\leq 3$. Then
\begin{equation} \label{rho1} \rho l= 4l -\sum_{i=1}^3 l_i - \sum_{i=1}^6 l_i \end{equation} by (\ref{rep}) and (\ref{rho}).  Since
$$ \ker(1+\rho) =(1-\rho) \Pic({\overline X})=\langle (l-l_2-l_3-l_4), (l-l_1-l_3-l_5), (l-l_1-l_2-l_6) \rangle $$ by (\ref{rep}), (\ref{rho}) and (\ref{rho1}), one concludes that
$$H^1(k, \Pic({\overline X}))=H^1(\langle\rho\rangle, \Pic({\overline X})) \cong \hat{H}^{-1}(\langle\rho\rangle, \Pic({\overline X})) =0.
 $$

\bigskip

Now we produce  concrete generators in $\Br_1(X)$ for  $\Br_1(X)/\Br(k) \cong H^1(k, \Pic({\overline X}))$. If $d\in  k^{\times 2}$ or $md \in  k^{\times 2}$, we have just seen that $\Br_1(X)/\Br(k)=0$.
Let us consider the other cases.

\medskip

 Let $U$ be the open subset of $X$ defined by $t\neq 0$. Then   equation (\ref{equ-hom}) is equivalent to
\begin{equation}
 (2z-xy)^2- 4d= (x^2-4)(y^2-4)
  \end{equation}
for $U$.  Since
$$ \{x\pm 2=0\} \cap \{((x\mp2)(y^2-4)=0 \} $$ is a closed subset of codimension $\geq 2$ on $U$, one obtains that $(x\pm 2, d)\in \Br_1(U)$. This implies that
$$ B= (x^2-4,d) =(y^2-4, d)= (z^2-4, d)\in \Br_1(U).  $$
The residues of $B$ at the lines $L_1$, $L_2$ and $L_3$ which form the complement of $U$ in $X$
 (cf.  Lemma \ref{lines}) are easily seen to be trivial.  One thus has $B\in \Br_1(X)$.

\bigskip

If  $m\in k^{\times 2}$,
 equation (\ref{equ-hom}) is equivalent to
 $$  (2y-\sqrt{m}z)^2- dz^2= 4(x-\sqrt{m})(yz-x-\sqrt{m}) $$
for $U$.  Then $(\sqrt{m}-x, d) \in \Br_1(U)$ by the same argument as above. This implies that $$M=((x+2)(\sqrt{m}-x),d)\in \Br_1(U). $$
Then $M\in \Br_1(X)$ by computing the residues of $M$ at $L_1$, $L_2$ and $L_3$ as above.

\bigskip 

To show that these elements
 $B$ and $M$
are not constant, one uses the conic fibration
$$ \pi:  \ U\rightarrow \Bbb A^1; \ (x, y, z) \mapsto x . $$ The generic fibre $U_\eta \xrightarrow{\pi_\eta} \eta$ induces $$\pi_{\eta}^*: \Br(\eta) \rightarrow \Br(U_\eta) \ \ \ \text{with} \ \ \  \ker (\pi_\eta^*) =(x^2-4, m-x^2)$$ by \cite[Theorem 5.4.1]{GS06}.

 \medskip

If  $[k(\sqrt{m}, \sqrt{d}) : k]=4$, then the residue of $(x^2-4,d)$ at $(x^2-m)$ is different from that of $(x^2-4, m-x^2)$. This implies that $\pi_\eta^*(x^2-4,d)$ is not constant by the Faddeev exact sequence (see \cite[Corollary 6.4.6]{GS06}). Since $\pi_\eta^*(x^2-4,d)$ is the pull-back of $B$ 
by the natural map
$U_\eta\rightarrow U$, 
one concludes that $B$ is not constant, hence $B$ generates
$\Br_1(X)/\Br(k)=\mathbb Z/2$.

\medskip

If  $d\not\in k^{\times 2}$ and $m\in k^{\times 2}$, then we have  the residues
$$  \partial_{P} (x^2-4, d) = \begin{cases} d\in k^\times/k^{\times 2}  \ \ \ & \text{if $P\in \{(x\pm 2)\}$} \\
1\in k^\times/k^{\times 2} \ \ \ & \text{otherwise} \end{cases} $$
and
$$\partial_P ((\sqrt{m}-x)(x+2), d)= \begin{cases} d\in k^\times/k^{\times 2} \ \ \ & \text{if $P\in \{(x+2), (x- \sqrt{m}) \} $} \\
1\in k^\times/k^{\times 2} \ \ \ & \text{otherwise} \end{cases} $$
and
$$ \partial_P (x^2-4, m-x^2) = \begin{cases} d\in k^\times/k^{\times 2} \ \ \ & \text{if $P\in \{(x\pm 2), (x\pm \sqrt{m})\} $} \\
1\in k^\times/k^{\times 2} \ \ \ & \text{otherwise}  \end{cases} $$ for all closed points $P$ of $\Bbb P^1$.
Then $$\pi_\eta^*(x^2-4,d), \ \ \ \pi_\eta^*((\sqrt{m}-x)(x+2), d) \ \ \ \text{ and } \ \ \ \pi_\eta^*((x^2-4,d)\cdot ((\sqrt{m}-x)(x+2), d))$$
are not constant by the Faddeev exact sequence. Therefore $B$ and $M$ have  independent classes in $\Br_1(X)/\Br(k) \cong (\mathbb Z/2)^2$, hence generate that group.
\end{proof}

\medskip

\begin{rem}
If $d\in k^{\times 2}$, then $X_{m}$ contains two skew $k$-rational lines, e.g. $l_{1}$ and $l_{2}$.
If $d \cdot m\in k^{\times 2}$, then  $\overline{X}_{m}$ contains the two  lines
$l_{4}(1,1)$ and $l_{4}(-1,-1)$ defined over the quadratic field extension $k(\sqrt{m})$,
which are  conjugate to each other and do not meet.
As  for any smooth projective cubic surface with this property,  this implies that $X_{m}$ is
$k$-birational to projective space $\Bbb P^2_{k}$.
 This general fact goes back to L. Euler in  the case of the diagonal cubic surface $x^3+y^3+z^3+t^3=0$  and a generalisation is due  to B. Segre. Segre's result was completed by Swinnerton-Dyer's paper \cite{swd}.
 Therefore $\Br(X)=\Br(k)$. We keep this part of the computation in Proposition \ref{x}
 because some intermediate results will later be used.
\end{rem}

\medskip

\begin{thm} \label{br} Let  $k$  be a field of characteristic zero and let  $m\in k \setminus \{0,4\}$ and $d=m-4$.
Let $ U_{m}$ be the affine $k$-variety defined by (\ref{markoff1}).

If $[k(\sqrt{m}, \sqrt{d}) : k]=4$
  then
$$\Br_1 (U_{m})/\Br_0(U_{m}) \cong (\Bbb Z/2)^3 $$ with the generators $\{(x-2,d), \ (y-2, d), \ (z-2, d)\}$.

If   $d \notin k^{\times 2}$ and $dm \in k^{\times 2}$
then
$$\Br_1 (U_{m})/\Br_0(U_{m}) \cong (\Bbb Z/2)^2 $$ with the generators $\{(x-2,d), \ (y-2, d)\}$.

If $d \notin k^{\times 2}$ and $m \in k^{\times 2}$,
then
$$ \Br_1(U_{m})/\Br_0(U_{m}) \cong (\Bbb Z/2)^4 $$ with the generators $\{(x-2,d), \ (y-2, d), \ (z-2, d), \ (x-\sqrt{m}, d) \}$.

Otherwise, i.e. if $d \in k^{\times 2}$,  then $\Br_1(U_{m})=\Br_0(U_{m})$.
\end{thm}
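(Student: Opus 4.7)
The plan is to mirror the proof of Proposition \ref{x}, replacing the projective surface $X_m$ by the open surface $U_m$. First, Proposition \ref{unit} gives $\bar{k}^{\times} = \bar{k}[U_m]^{\times}$, so the Hochschild--Serre spectral sequence (as in \cite[Lemma 2.1]{CTXu09}) yields $\Br_1(U_m)/\Br_0(U_m) \cong H^1(g, \Pic(\overline{U}_m))$. The same proposition supplies the exact sequence of $g$-lattices
\[ 0 \to \oplus_{i=1}^3 \Bbb Z L_i \to \Pic(\overline{X}_m) \to \Pic(\overline{U}_m) \to 0, \]
which combined with the basis $\{l, l_1, \dots, l_6\}$ of $\Pic(\overline{X}_m)$ and the identifications $L_j = l - l_j - l_{j+3}$ from the proof of Proposition \ref{x} allows me to present $\Pic(\overline{U}_m)$ as the rank-$4$ lattice on $l_1, l_2, l_3, l_4$ (with $l = l_1 + l_4$, $l_5 = l - l_2$, $l_6 = l - l_3$), and to transport the Galois action formulas (\ref{rep})--(\ref{rho1}) to obtain explicit matrices for $\sigma, \tau$, and $\rho$ on this basis. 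A short computation then gives that $\sigma$ acts as $-I$ on $\Pic(\overline{U}_m)$, while $\tau$ fixes $l_1, l_2, l_3$ and sends $l_4 \mapsto -l_1 + l_2 + l_3 - l_4$.

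In each cyclic case the group $G = \Gal(k(\sqrt{m},\sqrt{d})/k)$ is generated by one of $\sigma, \tau, \rho$, and $H^1(G, \Pic(\overline{U}_m)) = \hat{H}^{-1}(G, \Pic(\overline{U}_m))$ is computed by direct linear algebra on $\Bbb Z^4$: this yields $(\Bbb Z/2)^4$ when $\sigma$ acts ($m$ a square, $d$ not), $(\Bbb Z/2)^2$ when $\rho$ acts ($dm$ a square, $d$ not), and $0$ when $\tau$ acts alone ($d$ a square, $m$ not); the subcase $d, m \in k^{\times 2}$ is trivial. In the noncyclic case $G = (\Bbb Z/2)^2$ corresponding to $[k(\sqrt{m}, \sqrt{d}) : k] = 4$, I would use inflation--restriction for the subgroup $\langle \tau \rangle$: the direct computation shows $H^1(\langle \tau \rangle, \Pic(\overline{U}_m)) = 0$ and $\Pic(\overline{U}_m)^{\langle \tau \rangle} = \Bbb Z l_1 \oplus \Bbb Z l_2 \oplus \Bbb Z l_3$, on which $\sigma$ still acts as $-I$, whence $H^1(G, \Pic(\overline{U}_m)) \cong H^1(\langle \sigma \rangle, \Bbb Z^3) \cong (\Bbb Z/2)^3$.

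It remains to identify explicit generators. For $(x - 2, d)$, the identity (\ref{markoff2}) shows that $\{x = 2\} \cap U_m$ is contained in $\{(y - z)^2 = d\}$, so $d$ is a square on every component of the zero locus of $x - 2$ in $U_m$ and the residue of $(x - 2, d)$ there vanishes; the permuted forms of (\ref{markoff2}) handle $(y - 2, d)$ and $(z - 2, d)$, and the identity $(2y - \sqrt{m} z)^2 - dz^2 = 4(x - \sqrt{m})(yz - x - \sqrt{m})$ already used in Proposition \ref{x} handles $(x - \sqrt{m}, d)$. To prove that the listed elements generate, I would compute residues along $L_1, L_2, L_3$ using formula (\ref{explicitresidue}) in the projective charts $y = 1$ and $x = 1$: the answer is the residue tuples $(0, d, d), (d, 0, d), (d, d, 0)$ for $(x - 2, d), (y - 2, d), (z - 2, d)$ respectively, and $(0, d, d)$ for $(x - \sqrt{m}, d)$, all inside $H^1(k, \Bbb Z/2)^3 \subset \oplus_i H^1(k(L_i), \Bbb Q/\Bbb Z)$. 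Plugging into the residue exact sequence
\[ 0 \to \Br_1(X_m) \to \Br_1(U_m) \to \oplus_{i=1}^3 H^1(k(L_i), \Bbb Q/\Bbb Z), \]
combined with the computation of $\Br_1(X_m)/\Br_0(X_m)$ from Proposition \ref{x} and the identity $(x - 2, d) + (y - 2, d) + (z - 2, d) = (x^2 - 4, d)$ in $\Br(k(U_m))$ (derivable from (\ref{markoff3})), a dimension count in each of the four cases shows that the listed elements are $\Bbb F_2$-linearly independent modulo $\Br_0(U_m)$ and hence generate.

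The main obstacle will be the noncyclic case, where one must carry out the Hochschild--Serre reduction and correctly identify the $\tau$-fixed sublattice before applying the cyclic computation; a secondary hurdle is the residue bookkeeping for $(x - 2, d)$ at $L_2$ and $L_3$, which requires passing to the chart $x = 1$ and rewriting $x - 2$ as $(1 - 2T)/T$ to see that the valuation along each of $L_2, L_3$ equals $-1$ while the valuation along $L_1$ (computed in the chart $y = 1$) is $0$, producing the asymmetric residue pattern recorded above.
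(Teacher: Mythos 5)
Your proposal is essentially correct and follows the same overall strategy as the paper: reduce to $H^1(g,\Pic(\overline U_m))$ via Hochschild--Serre and the fact that $\bar k[U_m]^\times = \bar k^\times$, describe $\Pic(\overline U_m)$ as the rank-$4$ lattice on $[l_1],\dots,[l_4]$ using $L_j = l - l_j - l_{j+3}$, transport the Galois matrices from Proposition \ref{x}, compute $\hat H^{-1}$ in each cyclic case, and handle the biquadratic case by inflation--restriction. Your residue computation at $L_1, L_2, L_3$ and the use of the relation $(x-2,d)+(y-2,d)+(z-2,d) = (x^2-4,d)$ (from (\ref{markoff3})) together with Proposition \ref{x} to show independence is exactly the paper's argument: the three residue vectors sum to zero in $(\mathbb Z/2)^3$, so the three classes span only a 2-dimensional image under residue, and the sum, which lies in $\Br_1(X_m)$, is identified with the nonconstant class $B$ to recover the third dimension. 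The one genuine deviation is the biquadratic case: you take $\langle\tau\rangle$ as the normal subgroup, observe $H^1(\langle\tau\rangle, \Pic(\overline U_m)) = 0$ and $\Pic(\overline U_m)^{\langle\tau\rangle} = \mathbb Z^3$ with $\sigma = -I$, and invoke $H^1(G,M) \cong H^1(G/\langle\tau\rangle, M^{\langle\tau\rangle})$; the paper instead uses $\langle\sigma\rangle$, notes $\Pic(\overline U_m)^{\langle\sigma\rangle} = 0$, and computes $H^1(\langle\sigma\rangle, \Pic(\overline U_m))^{G/\langle\sigma\rangle}$. Both five-term-sequence routes are valid and give $(\mathbb Z/2)^3$; yours has the minor advantage of passing through the concrete fixed lattice $\mathbb Z^3$, the paper's of avoiding the need to verify $H^1(\langle\tau\rangle,\cdot)=0$ separately. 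One small point worth spelling out if you write this up: in the $m \in k^{\times 2}$, $d \notin k^{\times 2}$ case, the residue vector of $(x-\sqrt m,d)$ coincides with that of $(x-2,d)$, so the residue map alone does not distinguish them, and you must additionally use the relation $((x-\sqrt m)(y-2)(z-2),d) = ((x-\sqrt m)(x+2),d)$ from (\ref{markoff3}) together with the nonconstancy of $M$ (and $B$) from Proposition \ref{x} to rule out the two a priori possible dependences $(x-\sqrt m,d) \equiv (x-2,d)$ and $(x-\sqrt m,d) \equiv (y-2,d)+(z-2,d)$ modulo $\Br_0$.
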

 
 \begin{proof}
 We keep notation as in Lemma \ref{lines}. For ease of notation, we set $U=U_{m}$.
 Let $l\in \Pic({\overline X})$ as in the proof  of Proposition \ref{x}.
 Then $\Pic({\overline U})$ is given by the following quotient group
  \begin{equation}\label{Z4}
  ((\oplus_{i=1}^6 \Bbb Z l_i )  \oplus \Bbb Z l ) /( l-l_j-l_{j+3}:  1\leq j\leq 3) \cong  \oplus_{i=1}^4 \Bbb Z [l_i]
  \end{equation}
 by Proposition \ref{unit} and formula (\ref{l}). Here given a divisor $D$ on $\overline{X}$ we denote by    $[D]$  the image   in $\Pic({\overline U})$
 of its class in $\Pic({\overline X})$.
By Proposition \ref{unit} we have ${\overline k}^{\times}=\overline{k}[U]^{\times}$.
The Hochschild-Serre spectral sequence (see \cite[Lemma 2.1]{CTXu09}) then gives
 an injective homomorphism
 \begin{equation} \label{b-p} \Br_1(U)/\Br_0(U) \hookrightarrow H^1(k, \Pic({\overline U})). \end{equation}
 In fact, it is an isomorphism since the smooth compactification $X$ of $U$ has rational points,
 hence also $U$ (any smooth cubic surface over an infinite field $k$ is $k$-unirational as soon as it has a $k$-rational point).

$\bullet$
Case $[k(\sqrt{m}, \sqrt{d}) : k]=4$. Let $G=\Gal(k(\sqrt{m}, \sqrt{d}) /k)$.
Let $\sigma$ and $\tau$ be the generators of $\Gal(k(\sqrt{m}, \sqrt{d})/k)$ satisfying
$$\sigma (\sqrt{d})=-\sqrt{d}, \ \  \sigma(\sqrt{m}) =\sqrt{m}; \ \ \  \ \tau(\sqrt{d}) =\sqrt{d} , \ \ \tau(\sqrt{m})=-\sqrt{m} . $$

Then in $\Pic({\overline U})$ we have the following equalities
\begin{equation}\label{act}  \sigma ([l_i]) = - [l_i]  \end{equation} for $1\leq i\leq 4$ by (\ref{rep}),
 $\tau ([l_i]) = [l_i] $ for $1\leq i\leq 3$ and
  \begin{equation} \label{act2}  \tau([l_4])=-[l_1] + [l_2]+[l_3] -[l_4]    \end{equation} by (\ref{rep-t}).
Since $\Pic({\overline U})$ is free and $\Gal(\bar k/k(\sqrt{m}, \sqrt{d}))$ acts on $\Pic({\overline U})$ trivially, one obtains that
$$ H^1(G, \Pic({\overline U})) \cong H^1(k, \Pic({\overline U}))$$ by \cite[(1.6.6) Proposition]{NSW}.
 Let $H$ be the subgroup of $G$ generated by $\sigma$.
 Then
 $$\Pic({\overline U})^H=0$$ by the equation (\ref{act}). Therefore
$$ H^1(G,\Pic({\overline U})) \cong H^1(H, \Pic({\overline U}))^{G/H} $$ by \cite[(1.6.6) Proposition]{NSW}.
Since
$$H^1(H, \Pic({\overline U}))\cong
 \hat{H}^{-1}(\langle\sigma\rangle, \Pic({\overline U}))
 \cong \oplus_{i=1}^4 (\Bbb Z/2) [l_i] $$ by \cite[(1.6.12) Proposition]{NSW} and the equation (\ref{act}),
one concludes
$$ H^1(k, \Pic({\overline U})) \cong H^1(H, \Pic({\overline U}))^{G/H} \cong \oplus_{i=1}^3 (\Bbb Z/2) [l_i] $$ by (\ref{act2}).

\medskip

$\bullet$
Case $k(\sqrt{m})=k(\sqrt{d})\neq k$. Let $\rho$ be the generator of $\Gal(k(\sqrt{m})/k)$. Since (\ref{rep}) is still available, 
one has $\rho ([l_i]) = - [l_i] $ for $1\leq i\leq 3$. By (\ref{rho}),  one obtains
$$ \rho ([l_4])=[l_1] - [l_2]-[l_3] + [l_4] . $$
Therefore
$$ H^1(k, \Pic({\overline U})) = H^1(\langle\rho\rangle, \Pic({\overline U})) \cong \hat{H}^{-1}(\langle\rho\rangle, \Pic({\overline U}))
\cong \oplus_{i=1}^2 (\Bbb Z/2) [l_i] . $$

\medskip

$\bullet$
Case $k(\sqrt{d})\neq k(\sqrt{m})= k$. Let $\sigma$ be the generator of $\Gal(k(\sqrt{d})/k)$. Since the intersection formulae (\ref{int}) and (\ref{int1}) are still available, one has $\sigma ([l_i]) = - [l_i] $ for $1\leq i\leq 4$. Then
$$ H^1(k, \Pic({\overline U})) =
H^1(\langle\sigma\rangle, \Pic({\overline U})) \cong \hat{H}^{-1}(\langle\sigma\rangle, \Pic({\overline U}))
\cong \oplus_{i=1}^4 (\Bbb Z/2) [l_i] . $$

 \medskip

$\bullet$
The   remaining case is $d\in k^{\times 2}$. If also $m \in k^{\times 2}$, then the Galois action on
the lattice $\Pic({\overline U})$ is trivial, hence $H^1(k,\Pic({\overline U}))=0$. Suppose
$m \notin k^{\times 2}$.
 Let $\tau$ be the generator of $\Gal(k(\sqrt{m})/k)$. Since
$$ \ker(1+\tau)=\langle [l_1]- [l_2] -[l_3]+ 2  [l_4] \rangle $$ and $$(1-\tau)([l_4])= [l_1]- [l_2] -[l_3]+ 2[l_4]  $$ by (\ref{act2}), one concludes that $H^1(k, \Pic({\overline U}))=0$.

\bigskip

 Let us now produce concrete elements in $\Br_1(U)$. Using equation (\ref{markoff2})
one sees that  the quaternion class $(x\pm 2,d)$ is in $\Br_1(U)$ by the same argument as that in Proposition \ref{x}. 
Similar equations give the same result for $(y \pm 2,d)$ and $(z \pm 2,d)$.

The plane $t=0$ cuts out the three lines $(L_{1},L_{2}, L_{3})$, each with multiplicity 1. The plane $x\pm 2t=0$ cuts out  $L_{1}$ and two  lines
each defined over $k(\sqrt{d})$. From this we compute the residues:
 $$ \partial_{L_i}((x \pm 2  t)/t,  d)=\begin{cases} 1 \in k^\times/(k^{\times})^2  \ \ \  & \text{$i=1$} \\
  d   \in k^\times/(k^{\times})^2 \ \ \ & \text{$i=2$ and $3$.} \end{cases} $$  Similarly, one has
$$ \partial_{L_i}((y \pm 2  t)/t,  d)=\begin{cases} 1 \in k^\times/(k^{\times})^2  \ \ \  & \text{$i=2$} \\
  d   \in k^\times/(k^{\times})^2 \ \ \ & \text{$i=1$ and $3$} \end{cases} $$
  and
  $$ \partial_{L_i}((z \pm 2  t)/t,  d)=\begin{cases} 1 \in k^\times/(k^{\times})^2  \ \ \  & \text{$i=3$} \\
  d   \in k^\times/(k^{\times})^2 \ \ \ & \text{$i=1$ and $2$.} \end{cases} $$
This computation of residues will enable us to establish independence modulo 2 of various
classes in $\Br_1 (U)/\Br_0(U)$.

Using equation (\ref{markoff3}) one gets
 \begin{equation}\label{relation-br}
 ((x-2)(y-2)(z-2),d)=(x^2-4,d).
 \end{equation}

\medskip

 When $[K:k]=4$, the quaternion $(x^2-4,d)$ is not constant by Proposition \ref{x}. Therefore $\{(x-2,d), \ (y-2, d), \ (z-2, d)\}$ is a set of generators of $\Br_1 (U)/\Br_0(U) \cong (\mathbb Z/2)^3$.

 \medskip

 When $k(\sqrt{d})=k(\sqrt{m})\neq k$, then $\{(x-2,d), \ (y-2, d) \}$ is a set of generators of $\Br_1 (U)/\Br_0(U) \cong (\mathbb Z/2)^2$.
\medskip

When $m\in k^{\times 2}$ and $d\not\in k^{\times 2}$,   equation (\ref{markoff1}) can be written as
$$ (2y-\sqrt{m} z)^2-dz^2=4(x-\sqrt{m})(yz-x-\sqrt{m}) . $$
Then $(x-\sqrt{m},d)\in \Br_1(U)$ by the same argument as that in Proposition \ref{x}.  Since $(x-\sqrt{m},d)$ has the same residues as $(x-2,d)$ at $L_i$ for $1\leq i\leq 3$, the class $(x-\sqrt{m},d)$ in $\Br_1(U)/\Br_0(U)$ is different from $(x-2, d)$, $(y-2,d)$ and $(z-2,d)$ by Proposition \ref{x}. Since
$$ ((x-\sqrt{m})(y-2)(z-2), d)= ((x-\sqrt{m})(x+2), d) $$ is not a constant element by (\ref{markoff3}) and Proposition \ref{x},  one concludes that
 $$\{(x-2,d), \ (y-2, d), \ (z-2, d), \ (x-\sqrt{m}, d) \}$$ is a set of generators of  $\Br_1(U)/\Br_0(U) \cong (\mathbb Z/2)^4$.
\end{proof}

\begin{rem} Note  that the classes $\{ (x+2,d), \ (y+2, d), \ (z+2, d)\} $ in  $\Br_1(U_{m})/\Br_0(U_{m})$ in 
Theorem \ref{br} are not independent because (\ref{markoff1}) can also be written as
\begin{equation}
(x+y+z+2)^2-d=(x+2)(y+2)(z+2).
\end{equation}
\end{rem}

\section{Computation of Brauer groups III, transcendental parts}

Let $k$ be a field of characteristic zero, and  $m\in k\setminus \{0,4\}$. Let $d=m-4 \neq 0$.
Let $X \subset \Bbb P^3_{k}$ be the smooth cubic surface defined by the equation
$$ t(x^2+y^2+z^2)-xyz=mt^3.$$
 Let $U$ be the affine open sub-variety of $X$ given by $t\neq 0$, i.e. by the affine equation
 $$ x^2+y^2+z^2-xyz=m.$$
 By Proposition \ref{br-tran}, we have  $\Br({\overline U}) \simeq \Bbb Q/\Bbb Z$.
In this section, we determine the transcendental Brauer group $\Br(U)/\Br_{1}(U) \subset \Br({\overline U})$ of $U$.

We here set
$$l_{i} = l_{i}(1,1)  \  \  \text{and} \ \ l^{-}_{i} = l_{i}(1,-1).$$

For computational reasons, in this section  we  contract ${\overline X}$ to $\Bbb P^2_{\bar k}$ over $\bar k$ by sending the 6 lines  $l^{-}_{i} $ to  6 points. The 3 lines $\{L_{i}\}_{i=1}^3$ correspond to three lines in $\Bbb P^2_{\bar k}$ by this contraction and each of these three corresponding lines passes through one pair among the 6 points by \cite[Chapter V, Theorem 4.9]{Hartshorne}.
We let $l^{-} \in \Pic({\overline X})$ be the inverse of the class of a line in $\Bbb P^2_{\bar k}$.
The contraction induces an isomorphism $$V:= {\overline U}\setminus  \{\bigcup_{i=1}^6  l^{-}_{i}  \}   \simeq \Bbb G_m \times_{\bar k} \Bbb G_m$$ over $\bar k$.

\medskip
Though this will not be used in the paper, it is worth noticing the following consequence.
\begin{prop} \label{pi1}
The (Grothendieck) geometric fundamental group $\pi_{1}(\overline U)$ is trivial.
\end{prop}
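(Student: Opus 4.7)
The plan is to establish two facts that together imply $\pi_1(\overline U)$ is trivial: (i) $\pi_1(\overline U)^{\ab}=0$, and (ii) $\pi_1(\overline U)$ is abelian.

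First, for (i), I will invoke Kummer theory. By Proposition \ref{unit} we have $\overline k^\times = \overline k[\overline U]^\times$, and $\Pic(\overline U)$ is torsion-free. Feeding this into the long exact cohomology sequence attached to $1\to \mu_n\to \BG_m \xrightarrow{n}\BG_m\to 1$ gives $H^1_{\et}(\overline U,\mu_n)=0$ for every $n\geq 1$: the term from $H^0(\overline U,\BG_m)=\overline k^\times$ vanishes by $n$-divisibility, and the term from $H^1(\overline U,\BG_m)=\Pic(\overline U)$ vanishes by torsion-freeness. Since $\overline k$ is algebraically closed of characteristic zero, $\mu_n\simeq \mathbb Z/n$ as \'etale sheaves on $\overline U$, so $H^1_{\et}(\overline U,\mathbb Z/n) = \Hom^{\mathrm{cont}}(\pi_1(\overline U), \mathbb Z/n) = 0$ for every $n$. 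By Pontryagin duality for profinite abelian groups this forces $\pi_1(\overline U)^{\ab}=0$.

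Next, for (ii), note that $\overline X$ is a smooth projective rational surface, so $\pi_1(\overline X)=1$. The complement $\overline X\setminus \overline U = L_1\cup L_2\cup L_3$ is a strict normal crossings divisor (three lines meeting pairwise transversally in three distinct points). The standard exact sequence of \'etale fundamental groups for the complement of a SNC divisor (SGA 1, Exp.\ XIII) then gives a surjection from $\pi_1(\overline U)$ onto $\pi_1(\overline X)=1$ whose kernel is topologically normally generated by three inertia classes $\tau_1,\tau_2,\tau_3$ attached to $L_1,L_2,L_3$, each a copy of $\hat{\mathbb Z}(1)$. At each of the three nodes $L_i\cap L_j$, the local \'etale fundamental group of the complement of the two local branches is isomorphic to $\hat{\mathbb Z}(1)^2$, so its two canonical inertia generators commute; their images are $\tau_i$ and $\tau_j$, which therefore commute in $\pi_1(\overline U)$. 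Thus the three topological generators of $\pi_1(\overline U)$ pairwise commute, and $\pi_1(\overline U)$ is an abelian profinite group.

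Combining (i) and (ii) yields $\pi_1(\overline U) = \pi_1(\overline U)^{\ab} = 0$. The only non-formal input is the SGA 1 statement producing inertia generators for $\pi_1$ of the complement of a SNC divisor (together with their commutation at nodes); the Picard-theoretic input has already been established in Proposition \ref{unit}, and I expect no further obstacle.
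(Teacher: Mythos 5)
Your step (i) — the vanishing of $H^1_{\et}(\overline U,\mathbb Z/n)$ via the Kummer sequence, $\overline k[\overline U]^\times = \overline k^\times$, and torsion-freeness of $\Pic(\overline U)$ — is correct and is exactly the argument the paper gives.

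Step (ii) is where you diverge from the paper and where there is a genuine gap. You argue that since the complement of $\overline U$ in $\overline X$ is a nodal configuration of three lines, the inertia classes $\tau_1,\tau_2,\tau_3$ pairwise commute by a local computation at the nodes, hence $\pi_1(\overline U)$ is abelian. But an inertia class is only a conjugacy class of procyclic subgroups: the local $\hat{\mathbb Z}(1)^2$ at the node $L_i\cap L_j$ produces a commuting pair of \emph{particular} conjugates of representatives of $\tau_i$ and $\tau_j$, and there is no reason that consistent representatives can be chosen simultaneously across all three nodes. Moreover, the kernel of $\pi_1(\overline U)\to\pi_1(\overline X)=1$ is only \emph{normally} generated by these classes, so even pairwise-commuting choices would not directly yield abelianness (a group normally generated by a commuting set can easily fail to be abelian). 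Promoting this local commutativity to global abelianness is precisely the content of the Deligne--Fulton theorem on $\pi_1$ of complements of nodal curves, which is a genuine theorem, not a formal consequence of the SGA~1 exact sequence. So as written your (ii) does not prove what it claims, and if you repair it you are implicitly invoking a substantially harder ingredient than the problem requires.

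The paper's route to (ii) is much cheaper: it observes that $\overline U$ contains the open subset $V = \overline U \setminus (\bigcup_{i=1}^6 l_i^-)$, which (by contracting six disjoint lines on $\overline X$) is isomorphic to $\mathbb G_m\times\mathbb G_m$. For a normal variety in characteristic zero, an open immersion induces a surjection on $\pi_1$, so $\pi_1(\overline U)$ is a quotient of $\pi_1(\mathbb G_{m,\bar k}^2)=\hat{\mathbb Z}^2$ and is therefore abelian with no further work. You should replace your step (ii) by this observation (or, alternatively, explicitly cite Deligne--Fulton/Nori and justify its applicability to the cubic surface).
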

\begin{proof} Recall ${\rm char}(k)=0$.
Since $V$ is open in ${\overline U}$,
the group  $\pi_{1}(\overline U)$ is a quotient of $\pi_{1}(V)$. The group $ \pi_{1}(\Bbb G_m \times_{\bar k} \Bbb G_m) = \hat{\Bbb Z}^2$ is abelian.
From the above isomorphism we conclude that $\pi_{1}({\overline U})$ is abelian. It is thus isomorphic to  the profinite completion of the system
of groups $H^1({\overline U}, \Bbb Z/n)$.
By Proposition \ref{unit},  
  ${\overline k}^{\times} \simeq {\overline k}[U]^{\times}$ and $\Pic({\overline U})$ is torsion free. 
  The Kummer sequence then gives $H^1({\overline U}, \Bbb Z/n) \simeq \Pic({\overline U})[n]=0.$
\end{proof}

Using    Proposition \ref{unit} and Lemma \ref{lines}, we get:
  \begin{equation}\label{Z4bis}
 \Pic({\overline U}) = ((\oplus_{i=1}^6 \Bbb Z l_i^{-})  \oplus \Bbb Z l^{-} ) /( l^{-}-l_j^{-}-l_{j+3}^{-}:  1\leq j\leq 3) \cong  \oplus_{i=1}^4 \Bbb Z [l_i^{-}].
  \end{equation}

More precisely,   the composite $\theta$ of the natural maps
$$ \oplus_{i=1}^4 \Bbb Z [l_i^{-}] \to \Pic({\overline X}) \to \Pic({\overline U})$$
is an isomorphism. Under the inverse isomorphism 
$\theta^{-1}$,  the classes  of $l_{i}^{-}$ in $\Pic({\overline U})$ 
for $i=1,2,3,4$ are sent to $[l_{i}^{-}]$, the class of $l_{5}^{-}$
is sent to $[l_{1}^{-}]-[l_{2}^{-}]+[l_{4}^{-}]$, and the class of $l_{6}^{-}$
is sent to $[l_{1}^{-}]-[l_{3}^{-}]+[l_{4}^{-}]$.
The composite map  $$\Bbb Z [l^{-}]  \oplus \oplus_{i=1}^6 \Bbb Z [l_i^{-}]
= \Pic({\overline X}) \to \Pic({\overline U}) \to \oplus_{i=1}^4 \Bbb Z [l_i^{-}]=\Bbb Z^4$$
is given by
\begin{equation}\label{Z4ter}(\chi_{0},  \chi_1,\cdots, \chi_6) \mapsto (\chi_0+ \chi_1+\chi_5+\chi_6,\chi_2-\chi_5,\chi_3- \chi_6,   \chi_{0}+ \chi_4+\chi_5+\chi_6).
\end{equation}

 As we shall see below, the restriction map $\Br({\overline U}) \to \Br(V)$
is an isomorphism.
At least over some field extension of $k$ one may thus
compute the transcendental elements in $\Br({\overline U})$
by pull-back of  $\Br(\Bbb G_m \times_{\bar k} \Bbb G_m) \simeq \Bbb Q/\Bbb Z$.

\begin{thm} \label{tr-br} Let $n$ be a positive integer and $\zeta \in\bar k$ be a primitive $n$-th root of unity.
Keep notation as in Lemma \ref{lines} and Theorem \ref{br}.
Then the unique cyclic group of order $n$ in $\Br({\overline U})$ is generated by the cyclic algebra $R_n=(\frac{f}{g}, \frac{u}{v})_{\zeta}$
of dimension $n^2$, where
$$ \begin{cases} f=\frac{1}{2}(\sqrt{m}-\sqrt{d}-2)xz +\sqrt{d}xt+(2-\sqrt{m})yt +\sqrt{d}zt -\sqrt{m}\cdot \sqrt{d} t^2 \\
g=\frac{1}{2}(\sqrt{m}+\sqrt{d}-2)yz -\sqrt{d}yt+(2-\sqrt{m})xt -\sqrt{d}zt +\sqrt{m}\cdot \sqrt{d} t^2 \\
u= \frac{1}{2}(\sqrt{m}-\sqrt{d} -2) xy +\sqrt{d}yt + (2-\sqrt{m}) zt + \sqrt{d} xt -\sqrt{m}\cdot \sqrt{d} t^2 \\
v= \frac{1}{2}(\sqrt{m}+ \sqrt{d} -2) xz -\sqrt{d} zt +(2-\sqrt{m}) yt -\sqrt{d} xt + \sqrt{m}\cdot \sqrt{d} t^2 \end{cases}.$$
\end{thm}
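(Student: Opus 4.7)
The plan is to leverage the isomorphism $V := \overline U\setminus\bigcup_{i=1}^6 l_i^-\simeq \mathbb G_m\times_{\bar k}\mathbb G_m$ recalled before the theorem. On $\mathbb G_m\times\mathbb G_m$ the Brauer group is $\mathbb Q/\mathbb Z(-1)$, generated at level $n$ by the symbol $(p_1,p_2)_\zeta$ of the two coordinate projections. Since $\overline U$ is smooth and $V\subset\overline U$ open dense, the restriction $\Br(\overline U)\hookrightarrow \Br(V)$ is injective, and by Proposition \ref{br-tran} the source is itself abstractly $\mathbb Q/\mathbb Z(-1)$, so this inclusion is an isomorphism. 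Hence it suffices to show that $R_n$ defines a class in $\Br(\overline U)$ and that its image in $\Br(V)[n]$ is a generator.

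\textbf{Step 1 (divisors of $f,g,u,v$).} I would regard $f,g,u,v$ as sections of $\mathcal O_{\mathbb P^3}(2)|_{\overline X}$, each cutting out a degree-$6$ effective divisor on the cubic $\overline X$. The key claim is that each of the four quadrics $\{f=0\},\{g=0\},\{u=0\},\{v=0\}$ decomposes on $\overline X$ as a sum of six lines all drawn from the configuration $\{L_1,L_2,L_3\}\cup\{l_i^-\}_{i=1}^6$. This is verified by substituting the parametrizations of Lemma \ref{lines} into each quadric, one line at a time: writing $f=xA+tB$ with $A,B$ linear makes $L_1\subset\{f=0\}$ evident, while $l_4^-\subset\{f=0\}$ follows from collecting coefficients of $tz$ and invoking the identity $\sqrt m(\sqrt m-\sqrt d-2)+(2-\sqrt m)(\sqrt m-\sqrt d)+2\sqrt d=0$; the remaining cases are analogous. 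Together these decompositions yield explicit divisors
\[
\div_{\overline X}(f/g)=\sum_{i=1}^6 a_i[l_i^-]+\sum_{j=1}^3 b_j[L_j],\qquad \div_{\overline X}(u/v)=\sum_{i=1}^6 c_i[l_i^-]+\sum_{j=1}^3 d_j[L_j]
\]
with specific integer coefficients. Restricting to $\overline U=\overline X\setminus(L_1\cup L_2\cup L_3)$ discards the $[L_j]$ terms, so $f/g,u/v\in\bar k[V]^\times$.

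\textbf{Step 2 ($R_n\in\Br(\overline U)$).} The only codimension-one points of $\overline U$ where $R_n=(f/g,u/v)_\zeta$ can have nontrivial residue are the six lines $l_i^-$ at which $f/g$ or $u/v$ has a zero or pole. For each such $l_i^-$, formula (\ref{explicitresidue}) evaluates the residue as $(-1)^{a_i c_i}(u/v)^{a_i}/(f/g)^{c_i}$ modulo $n$-th powers in $\bar k(l_i^-)^\times$. The arrangement in Step 1 should ensure that at each $l_i^-$ at least one of $a_i,c_i$ vanishes while the other factor specialises to a nonzero constant in $\bar k^\times$---which is automatically an $n$-th power since $\bar k$ is algebraically closed of characteristic $0$---so each residue is trivial and $R_n\in\Br(\overline U)$.

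\textbf{Step 3 (image in $\Br(V)[n]$).} Under $V\simeq\mathbb G_m^2$, the group $\bar k[V]^\times/\bar k^\times$ is free of rank $2$. Step 1 writes the classes of $f/g$ and $u/v$ as explicit vectors in $\mathbb Z^2$; the content is that they form a $\mathbb Z$-basis, i.e.\ the $2\times 2$ matrix of coefficients has determinant $\pm 1$. Using bilinearity of the Tate symbol together with the vanishing $(\alpha,\alpha)_\zeta=0$ in $\Br(\bar k(V))[n]$ (valid since $-1$ is an $n$-th power in $\bar k$), the image of $R_n$ in $\Br(V)[n]\cong \mathbb Z/n$ equals $\det\cdot(p_1,p_2)_\zeta=\pm(p_1,p_2)_\zeta$, a generator. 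An alternative check bypasses the $\mathbb G_m^2$-model and uses the residue isomorphism of Proposition \ref{br-tran}, computing $\partial_{L_1}R_n\in H^1(\overline V_{L_1},\mu_n)\cong \mathbb Z/n$ directly via (\ref{explicitresidue}) and checking that it is a generator.

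The principal obstacle is the explicit line-decomposition in Step 1: verifying that, with the particular coefficients $\sqrt m,\sqrt d,\sqrt m\sqrt d$ and $\pm 2$ appearing in $f,g,u,v$, all four quadrics cut $\overline X$ in six lines drawn exclusively from $\{L_1,L_2,L_3\}\cup\{l_i^-\}_{i=1}^6$. Once that combinatorial identification is in hand, Steps 2 and 3 become mechanical residue and determinant computations.
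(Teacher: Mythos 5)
Your overall strategy coincides with the paper's: pass to the open subset $V=\overline U\setminus\bigcup l_i^{-}\simeq\mathbb G_m^2$, compute the divisors of $f,g,u,v$ by Bezout, deduce that $f/g,u/v\in\bar k[V]^\times$, show $R_n\in\Br(\overline U)$, and pin down its order via a residue along a boundary line. Your ``alternative check'' in Step~3 (computing $\partial_{L_1}R_n$ explicitly and seeing that the induced rational function on $L_1$ has a nontrivial divisor, hence order $n$ modulo $n$-th powers) is exactly what the paper does. That said, there are a few concrete inaccuracies worth flagging.

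First, the claim in Step~1 that each of $\{f=0\},\ \{g=0\},\ \{u=0\},\ \{v=0\}$ cuts $\overline X$ in six lines all drawn from $\{L_1,L_2,L_3\}\cup\{l_i^{-}\}_{i=1}^6$ is not correct: the actual decompositions also involve lines of the form $l_i(1,1)$. For instance $\{f=0\}\cap X=L_1+L_3+l_1(1,-1)+l_3(1,1)+l_4(1,-1)+l_6(1,1)$. It is only after taking the ratios $f/g$ and $u/v$ that the $l_i(1,1)$-contributions cancel and one is left with divisors supported on $\{L_j\}\cup\{l_i^{-}\}$. So the conclusion you want ($f/g,u/v\in\bar k[V]^\times$) holds, but the intermediate assertion as stated is false.

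Second, in Step~2 the claim that ``at each $l_i^-$ at least one of $a_i,c_i$ vanishes'' is false for $i=2,5$: from the actual divisors $\div_{\overline U}(f/g)=l_1^--l_2^-+l_4^--l_5^-$ and $\div_{\overline U}(u/v)=l_2^--l_3^-+l_5^--l_6^-$, both valuations are nonzero at $l_2^-$ and at $l_5^-$. More importantly, once you have made the observation at the start that $\Br(\overline U)\hookrightarrow\Br(V)$ is an isomorphism (for you, because both sides are abstractly $\mathbb Q/\mathbb Z$; for the paper, because the cokernel injects into $\oplus_i H^1_{\et}(\mathbb A^1_{\bar k},\mathbb Q/\mathbb Z)=0$), the entire residue computation of Step~2 is redundant: any $R_n\in\Br(V)$ automatically lifts to $\Br(\overline U)$. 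You can simply delete Step~2.

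Third, variant~(a) of Step~3 — reading off the image of $R_n$ in $\Br(V)[n]$ as a determinant — is plausible but leaves unverified that the two divisor vectors $(1,-1,0,1,-1,0)$ and $(0,1,-1,0,1,-1)$ actually form a $\mathbb Z$-basis of the rank-2 kernel of $\mathbb Z^6\to\Pic(\overline U)\cong\mathbb Z^4$; this is a saturation statement that would need to be checked. Variant~(b), reducing to the residue along $L_1$, avoids this and is the cleaner route (and is the one the paper takes).
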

\begin{proof}

By Bezout's theorem (see \cite[Chapter I, Theorem 7.7]{Hartshorne}),
one has
$$ \begin{cases} \{f=0\} \cap X = L_1+L_3+l_1(1,-1) + l_3(1,1)+ l_4(1,-1) + l_6(1,1) \\
\{g=0\} \cap X = L_2 + L_3 + l_2(1,-1) + l_3(1,1) +l_5(1,-1) + l_6(1,1) \\
\{u=0\} \cap X = L_1 + L_2 +  l_1(1,1) + l_2(1,-1) + l_4 (1,1) +l_5 (1,-1) \\
\{v=0\} \cap X =  L_1+ L_3 + l_1(1,1) + l_3(1,-1) + l_4(1,1) + l_6(1,-1)   \end{cases} $$
where $L_i$ with $1\leq i\leq 3$ and $l_j (\epsilon, \delta)$ with $1\leq j \leq 6$, $\epsilon=\pm 1$ and $\delta=\pm 1$ are given by Lemma \ref{lines}.
 For instance, one checks that each of the lines appearing on the right hand side of the first formula is contained in  the projective quadric defined by $f=0$.
  Since the degree of $f$ is 2 and that of the cubic surface is $3$,
Bezout's theorem implies that the multiplicity of each line in $ \{f=0\} \cap X$ is 1.

This implies:
\begin{equation} \label{div}  \begin{cases} div(\frac{f}{g}) =L_1-L_2 + l_1(1,-1) - l_2(1,-1) + l_4(1,-1) -l_5(1,-1) \\
div(\frac{u}{v}) = L_2-L_3 +l_2(1,-1) - l_3(1,-1) + l_5(1,-1) - l_6(1,-1) . \end{cases}  \end{equation}

Let us first prove that
  the restriction map
$\Br({\overline U}) \to \Br(V)$
is an isomorphism. Indeed, the lines $ l_{i}^{-}= l_{i}(1,-1)$ are skew to one another,
and each of them intersects the plane $t=0$ in just one point, call it $P_{i}$.
Let $m_{i}: = l_{i}^{-} \setminus \{P_{i}\} \cong \mathbb A^1_{\bar k}$.
We thus have an exact sequence
$$0 \to \Br({\overline U}) \to \Br(V) \to \oplus_{i=1}^6 H^1_{\et}(m_{i}, \Bbb Q/\Bbb Z).$$
But $H^1_{\et}(m_{i}, \Bbb Q/\Bbb Z)=  H^1_{\et}(\mathbb A^1_{\bar k}, \Bbb Q/\Bbb Z)=0.$
 We thus have $R_n\in \Br({\overline U})$.

The line $L_{1}$ does not appear in the divisor of $u/v$.
In the divisor of $f/g$ it appears with valuation 1.
The residue of $R_{n}$ at the generic point of $L_{1}$
is thus given by the class in $k(L_{1})^{\times}/k(L_{1})^{\times n}$
 of the rational function induced by
$u/v$ on $L_{1}$.The divisor of that function is
a linear combination of points which in particular contains
$L_{3} \cap L_{1}$ with multiplicity $-1$. Thus the
order of the residue is $n$, and $R_{n}$ itself is of order $n$,
hence generates $\Br({\overline U})[n]$.
\end{proof}

  The 27  lines are defined over any field $E$ containing $k(\sqrt{d}, \sqrt{m})$. Over such a field $E$, we may consider
  the complement  $V/E$ of  the 6 lines $l_{i}^{-}$.
The same localisation argument  together  with 
the property $H^1_{\et}(E,\Bbb Q/\Bbb Z) \simeq H^1_{\et}({\mathbb A}^1_{E}  ,\Bbb Q/\Bbb Z)$
 yields an exact sequence
 $$0  \to \Br(U_{E}) \to \Br(V) \to \oplus_{i=1}^6 H^1(E, \Bbb Q/\Bbb Z).$$

  We are interested in the computation of the transcendental Brauer group over
  the ground field. For this, an explicit computation of residues   
  at the generic points
  of the lines $l_{i}^{-}$
  seems necessary. 

  Since $f,g, u,v$ and  each of the curves $D=l_{i}^{-}$ are defined  over $K=k(\sqrt{d}, \sqrt{m})$,
  using formula (\ref{explicitresidue})
   we can compute the residues $\partial_{D}(R_n)$ over any field $E$ containing $K$ and $\mu_n$ in
$$ H^1(E(D),\Bbb Z/n) \simeq E(D)^\times/E(D)^{\times n}. $$

These residues, as explained above, actually take their values in $E^{\times}/ E^{\times n}$.

\begin{prop}\label{nonclosedresidues}
With notation as above :

For $D=l_2^{-}$,
$ \partial_{D} (R_n)= \frac{\sqrt{m}+\sqrt{d}-2}{\sqrt{m}-\sqrt{d}-2}=-\frac{1}{2}(\sqrt{d}+\sqrt{m}) \in E^{\times}/ E^{\times n} $

For $D= l_5^{-}$, $ \partial_{D}(R_n) = \frac{\sqrt{m}-\sqrt{d}}{2}\cdot \frac{\sqrt{m}+\sqrt{d}-2}{\sqrt{m}-\sqrt{d}-2} =-1 \in E^{\times}/ E^{\times n}. $

$$  \partial_{D} (R_n)= \begin{cases}  -1 \in E^{\times}/ E^{\times n}\ \ \ & \text{$D\in \{l_1^{-}, \ l_3^{-})\}$} \\
\frac{\sqrt{d}-\sqrt{m}}{2} \in E^{\times}/ E^{\times n} \ \ \ & \text{$D\in \{l_4^{-}, \ l_6^{-}\} $}  \end{cases} $$
\end{prop}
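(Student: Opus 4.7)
My approach is to apply the explicit tame-symbol formula~(\ref{explicitresidue}) to $R_n=(F,G)_\zeta$ with $F=f/g$ and $G=u/v$. First I would read off the valuations $(v_D(F),v_D(G))$ at each line $D=l_i^-$ directly from~(\ref{div}); they equal $(1,0)$, $(-1,1)$, $(0,-1)$, $(1,0)$, $(-1,1)$, $(0,-1)$ for $i=1,\ldots,6$. This splits the six computations into two regimes.

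For the four lines $D\in\{l_1^-,l_3^-,l_4^-,l_6^-\}$ exactly one of the two valuations vanishes, so the formula reduces to the restriction of $G$ or $F$ to $D$. I would parametrise each such line using Lemma~\ref{lines} (for instance $l_6^-$ via $z=\sqrt m\,t$, $x=\tfrac{1}{2}(\sqrt m-\sqrt d)\,y$), substitute into the quadratic forms $f,g,u,v$, and factor the resulting degree-$2$ restrictions using the known intersection points of $D$ with the other lines supporting $\mathrm{div}(f)\cup\mathrm{div}(g)$ or $\mathrm{div}(u)\cup\mathrm{div}(v)$. Those common factors cancel in pairs, producing the stated constants: $-1$ for $l_1^-$ and $l_3^-$, and $(\sqrt d-\sqrt m)/2$ for $l_4^-$ and $l_6^-$.

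The two cases $D\in\{l_2^-,l_5^-\}$, for which $v_D(F)v_D(G)=-1$, are the main technical point. I would handle them by introducing $H:=FG=fu/(gv)$ and using the bilinearity identity $R_n=(F,H)_\zeta-(F,-1)_\zeta$ in $\Br$ (since $(F,F)_\zeta=(F,-1)_\zeta$). A direct subtraction of divisors gives $\mathrm{div}(H)=L_1+l_1^-+l_4^--L_3-l_3^--l_6^-$. The crucial observation is that $l_2^-$ and $l_5^-$ are precisely the two members of $\{l_j^-\}$ skew to all of $l_1^-,l_4^-,l_3^-,l_6^-$ (pairwise skewness of the $l_j^-$'s) and disjoint from $L_1,L_3$ (since $L_j$ meets only $l_j^-$ and $l_{j+3}^-$). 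Hence $H|_D$ is a regular, nowhere-vanishing function on $D\cong\mathbb{P}^1$, i.e.\ a constant $c\in E^\times$, and the residue formula collapses to $\partial_D(R_n)=-c^{-1}$.

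The remaining task is to evaluate the constant $c=H|_D$. Since $u$ and one of $g,f$ vanish to first order on $D$, I would expand $f,g,u,v$ to first order along a vector in $T_PX$ transverse to $D$ (obtained by picking a coordinate direction and projecting onto the tangent plane of the cubic surface). The leading transverse coefficients of the vanishing factors are explicit quadratic polynomials in the affine parameter along $D$, and they factor cleanly using the known intersection points of $D$ with the relevant divisor components. After cancellation one obtains $H|_{l_5^-}=1$, giving residue $-1$, and $H|_{l_2^-}=-(\sqrt m-\sqrt d-2)/(\sqrt m+\sqrt d-2)$, giving residue $(\sqrt m+\sqrt d-2)/(\sqrt m-\sqrt d-2)$. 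The two alternative closed forms stated in the proposition follow from the identity $(\sqrt m+\sqrt d)(\sqrt m-\sqrt d-2)=-2(\sqrt m+\sqrt d-2)$, an immediate consequence of $m-d=4$; this identity also collapses the product on the $l_5^-$ line to $-1$.
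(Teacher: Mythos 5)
Your plan reproduces the paper's proof and arrives at all six residues correctly; the core of both arguments is the same, namely a direct application of the explicit tame-symbol formula~(\ref{explicitresidue}) with valuations read off from~(\ref{div}), so that four of the lines reduce to a plain restriction of $f/g$ or $u/v$, while at $l_2^-$ and $l_5^-$ one must evaluate the restriction of $1/(FG)=gv/(fu)$, a ratio of two quantities each vanishing to first order on $D$. Where you differ is in packaging that last step: the paper rewrites $g$ and $u$ as linear combinations of the two local defining equations of $D$ and then invokes the surface equation~(\ref{markoff1}) to express their ratio as a regular function along $D$, whereas you first observe that $\mathrm{div}(H)=L_1-L_3+l_1^--l_3^-+l_4^--l_6^-$ with every component disjoint from $l_2^-$ and $l_5^-$, so $H|_D$ is a priori a nonzero constant, and then propose to extract it by a first-order transverse expansion. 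That constancy observation is a genuine small gain, since it tells you in advance that the restriction simplifies to a scalar; the needed disjointness from $L_1,L_3$ should be given a word (it follows because $L_j$ meets exactly $l_j(\epsilon,\delta)$ and $l_{j+3}(\epsilon,\delta)$, by Galois-conjugating the incidence relation for $(L_j\cdot l_i)$). Two minor remarks: the bilinearity detour through $R_n=(F,H)_\zeta-(F,-1)_\zeta$ is unnecessary, because formula~(\ref{explicitresidue}) applied directly to $(F,G)_\zeta$ with $(v_D F,v_D G)=(-1,1)$ already yields $-H^{-1}|_D$; and the plan asserts rather than carries out the parametrize-and-cancel computations for each line, so as written it is a sketch whose correctness rests on the unshown algebra. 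Your closing identity $(\sqrt m+\sqrt d)(\sqrt m-\sqrt d-2)=-2(\sqrt m+\sqrt d-2)$, a consequence of $m-d=4$, is precisely~(\ref{obvious}) and correctly reconciles the two stated forms of the residues at $l_2^-$ and $l_5^-$.
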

\begin{proof}
In the course of our computations, we shall make tacit use of the equality
\begin{equation} \label{obvious}
(\frac{\sqrt{d} - \sqrt{m}}   {2}).
(\frac{\sqrt{d} + \sqrt{m}}{2})= -1
\end{equation}.

Let us compute $\partial_{D}(R_n)$
for $D=l_2^{-}$.
Since
$$ g= [\frac{1}{2}(\sqrt{m}+\sqrt{d}-2)y-\sqrt{d}](z-x+\sqrt{d}) + (y-2) [ \frac{1}{2}(\sqrt{m}+\sqrt{d}-2) x - \frac{1}{2} \sqrt{d} (\sqrt{m} + \sqrt{d}) ]$$
and
$$ u=(2-\sqrt{m})(z-x+\sqrt{d}) + (y-2) [\frac{1}{2}(\sqrt{m}-\sqrt{d} -2) x +\sqrt{d} ], $$ one has
$$\frac{g}{u}= \frac{[\frac{1}{2}(\sqrt{m}+\sqrt{d}-2)y-\sqrt{d}](\frac{z-x+\sqrt{d}}{y-2} )+ [ \frac{1}{2}(\sqrt{m}+\sqrt{d}-2) x - \frac{1}{2} \sqrt{d} (\sqrt{m} + \sqrt{d}) ]}{(2-\sqrt{m}) (\frac{z-x+\sqrt{d}}{y-2}) + [\frac{1}{2}(\sqrt{m}-\sqrt{d} -2) x +\sqrt{d} ]} .$$
Since
$$ \frac{z-x+\sqrt{d}}{y-2}=\frac{xz-y-2}{z-x-\sqrt{d}} $$ by (\ref{markoff1}), one obtains that
$$ \partial_{D}(R_n)=
 - \frac{v}{u} \cdot \frac{g}{f}  = -\frac{v}{f} \cdot \frac{(\sqrt{m}-2) \cdot \frac{x(x-\sqrt{d})-4}{-2\sqrt{d}} +\frac{1}{2}(\sqrt{m}+\sqrt{d}-2) x - \frac{1}{2} \sqrt{d} (\sqrt{m} + \sqrt{d})} {(2-\sqrt{m}) \cdot \frac{x(x-\sqrt{d})-4}{-2\sqrt{d}} +\frac{1}{2} (\sqrt{m}-\sqrt{d} -2) x +\sqrt{d} }   $$
$$ = \frac{v}{f} \cdot \frac{(\sqrt{m}-2)[x(x-\sqrt{d})-4]-(\sqrt{m}+\sqrt{d} -2)\sqrt{d} x+d(\sqrt{m}+\sqrt{d})}{(\sqrt{m}-2)[x(x-\sqrt{d})-4]+(\sqrt{m}-\sqrt{d}-2)\sqrt{d} x +2d}  .$$
Since
$$ f|_{D}= \frac{1}{2}(\sqrt{m}-\sqrt{d}-2) x^2 +\sqrt{d}[3-\frac{1}{2}(\sqrt{m}-\sqrt{d})] x + 2(2-\sqrt{m})-d -\sqrt{m}\cdot \sqrt{d}  $$
and
$$ v|_{D}= \frac{1}{2}(\sqrt{m}+\sqrt{d}-2) x^2 - \sqrt{d}[1 + \frac{1}{2}(\sqrt{m}+\sqrt{d})] x + d+ 2(2-\sqrt{m}) + \sqrt{m} \cdot \sqrt{d} , $$
one concludes that
$$ \partial_{D} (R_n)= \frac{\sqrt{m}+\sqrt{d}-2}{\sqrt{m}-\sqrt{d}-2}=-\frac{1}{2}(\sqrt{d}+\sqrt{m}) \in E(D)^\times/E(D)^{\times n}. $$

For $D= l_5^{-}$, one has
$$ g= [\frac{1}{2} (\sqrt{m}+\sqrt{d}-2) y -\sqrt{d}]\cdot [z-\frac{1}{2}(\sqrt{m}-\sqrt{d}) x] + (y-\sqrt{m})[\frac{1}{2}(2+\sqrt{d}-\sqrt{m})x-\sqrt{d}] $$
and
$$ u= (2-\sqrt{m})[z-\frac{1}{2}(\sqrt{m}-\sqrt{d})x] + (y-\sqrt{m}) [\frac{1}{2}(\sqrt{m}-\sqrt{d}-2)x+\sqrt{d}] . $$ Since
$$\frac{z-\frac{1}{2}(\sqrt{m}-\sqrt{d}) x}{y-\sqrt{m}}=\frac{xz-y-\sqrt{m}}{z-\frac{1}{2}(\sqrt{m}+\sqrt{d})x}  $$ by (\ref{markoff1}),
one obtains that
$$ \partial_{D}(R_n) = -\frac{v}{f} \cdot \frac{\frac{1}{2} (\sqrt{m} + \sqrt{d})(\sqrt{m}-2)  \cdot \frac{(\sqrt{m}-\sqrt{d})x^2-4\sqrt{m}}{-2\sqrt{d}x} +\frac{1}{2}(2+\sqrt{d}-\sqrt{m}) x -  \sqrt{d} } {(2-\sqrt{m}) \cdot  \frac{(\sqrt{m}-\sqrt{d})x^2-4\sqrt{m}}{-2\sqrt{d}x}  +\frac{1}{2} (\sqrt{m}-\sqrt{d} -2) x +\sqrt{d} }   $$
$$ = \frac{v}{f} \cdot \frac{(\sqrt{m}-\sqrt{d})(\sqrt{m} -2) x^2 -2dx+2\sqrt{m}(\sqrt{m}+\sqrt{d})(\sqrt{m}-2)}{(2\sqrt{m}-4) x^2 -2dx +4\sqrt{m}(\sqrt{m}-2)}  $$
$$ = \frac{v}{f}\cdot \frac{(\sqrt{m}-\sqrt{d}) x^2 -2(\sqrt{m}+2) x + 2\sqrt{m}(\sqrt{m}+\sqrt{d})}{2x^2-2(\sqrt{m}+2) x + 4\sqrt{m}} . $$
Since
$$ f|_{D}= \frac{\sqrt{m}-\sqrt{d}-2}{\sqrt{m}+\sqrt{d}} \cdot x^2 +\frac{\sqrt{d}}{2}(\sqrt{m}-\sqrt{d}+2) x + \sqrt{m} (2-\sqrt{m} -\sqrt{d})  $$
and
$$ v|_{D}= \frac{\sqrt{m}+\sqrt{d}-2}{\sqrt{m}+\sqrt{d}} x^2 - \sqrt{d}[1 + \frac{1}{2}(\sqrt{m}-\sqrt{d})] x + \sqrt{m} (\sqrt{d}-\sqrt{m}+2) , $$
one concludes that
$$ \partial_{D}(R_n) = \frac{\sqrt{m}-\sqrt{d}}{2}\cdot \frac{\sqrt{m}+\sqrt{d}-2}{\sqrt{m}-\sqrt{d}-2} =-1 \in E(D)^\times/E(D)^{\times n} . $$
The other
residues are
$$  \partial_{D} (R_n)= \begin{cases}  -1 \in E(D)^\times/E(D)^{\times n}\ \ \ & \text{$D\in \{l_1^{-}, \ l_3^{-} \}$} \\
\frac{\sqrt{d}-\sqrt{m}}{2} \in E(D)^\times/E(D)^{\times n} \ \ \ & \text{$D\in \{l_4^{-}, \ l_6^{-}\} $}  \end{cases} $$
 by (\ref{div}) and
straightforward computations.
\end{proof}

\begin{lem} \label{br-nec} Let $K=k(\sqrt{m}, \sqrt{d}) \subset \bar{k}$.  Then
$$\Br(U_K)/\Br_1(U_K)\supset (\Bbb Z/n)  \ \ \ \ \text{if and only if} \ \ \  \mu_n \subset K \ \text{and} \ -1, \frac{\sqrt{d}-\sqrt{m}}{2} \in K^{\times n}  . $$
In this case, the element $R_n \in \Br(V)$ as defined in Theorem \ref{tr-br} belongs to $\Br(U_K) \subset \Br(V)$, is of order $n$, and generates  the  $n$-torsion subgroup of $\Br(U_K)/\Br_1(U_K) \subset \Br(\bar{U})$.
\end{lem}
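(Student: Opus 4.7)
My plan is to exploit the injection $\Br(U_K)/\Br_1(U_K) \hookrightarrow \Br(\overline U) \simeq \Bbb Q/\Bbb Z(-1)$ from Proposition \ref{br-tran}, whose $n$-torsion is cyclic of order $n$ and generated by the class $\overline{R_n}$ of $R_n$ by Theorem \ref{tr-br}. So the containment of $\Bbb Z/n$ in $\Br(U_K)/\Br_1(U_K)$ is equivalent to (i) Galois invariance of the unique $\Bbb Z/n$ in $\Br(\overline U)$ and (ii) the lifting of $\overline{R_n}$ to an element of $\Br(U_K)$. By Lemma \ref{twist}, (i) amounts to $\mu_n \subset K$, which is therefore necessary; I assume it henceforth, so the cyclic algebra $R_n = (f/g, u/v)_\zeta$ is defined over $K$ and belongs to $\Br(V_K)$.

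For the sufficiency direction, I would assume in addition that $-1$ and $\frac{\sqrt d - \sqrt m}{2}$ lie in $K^{\times n}$. Using the identity $(\sqrt d - \sqrt m)(\sqrt d + \sqrt m) = d - m = -4$, one gets $-\frac{1}{2}(\sqrt d + \sqrt m) = \left(\frac{\sqrt d - \sqrt m}{2}\right)^{-1}$, so each of the six residues of $R_n$ tabulated in Proposition \ref{nonclosedresidues} becomes an $n$-th power in $K^\times$ and therefore vanishes in $K^\times/K^{\times n}$. The localisation exact sequence
$$0 \to \Br(U_K) \to \Br(V_K) \xrightarrow{\partial} \bigoplus_{i=1}^6 H^1(m_i, \Bbb Q/\Bbb Z) = \bigoplus_{i=1}^6 H^1(K, \Bbb Q/\Bbb Z)$$
recorded just before the lemma then places $R_n$ in $\Br(U_K)$; since $\overline{R_n}$ has order $n$ in $\Br(\overline U)$, its image in $\Br(U_K)/\Br_1(U_K)$ has order exactly $n$, exhibiting the desired $\Bbb Z/n$.

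For the necessity of the power conditions, suppose $\Bbb Z/n \subset \Br(U_K)/\Br_1(U_K)$. Then $\overline{R_n}$ lifts to some $\alpha \in \Br(U_K)$, and inside $\Br(V_K)$ the difference $R_n - \alpha$ has trivial image in $\Br(\overline V) = \Br(\overline U)$, hence lies in $\Br_1(V_K)$. Consequently $\partial(R_n) = \partial(R_n - \alpha) \in \partial(\Br_1(V_K))$. Using the $K$-rational contraction $V_K \cong \Bbb G_{m,K}^2$, one identifies $\Br_1(V_K)/\Br_0(V_K) \cong H^1(K, \Bbb Q/\Bbb Z)^2$ and, by recording the orders of vanishing of the two toric coordinates along the six boundary divisors $l_i^-$, realizes $\partial(\Br_1(V_K)) \subset (K^\times/K^{\times n})^6$ as the subgroup of sextuples of the form $(a, b, c, a, b, c)$ with $abc = 1$. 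Matching this subgroup against the explicit residues of $R_n$ in Proposition \ref{nonclosedresidues}, and again using the identity $(\sqrt d - \sqrt m)(\sqrt d + \sqrt m) = -4$, yields precisely the two conditions $-1, \frac{\sqrt d - \sqrt m}{2} \in K^{\times n}$.

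I expect the main obstacle to be the explicit description of $\partial(\Br_1(V_K))$ in the necessity step, which requires careful bookkeeping of the $K$-rational toric structure on $V_K$ and of which pairs of lines $l_i^-$ contract to points on each of the three coordinate axes of $\Bbb P^2$. A pleasant feature of the resulting calculation is that the conditions for $\partial(R_n) \in \partial(\Br_1(V_K))$ coincide with those for $\partial(R_n) = 0$, so whenever the containment holds the lift $\alpha$ may be taken to be $R_n$ itself; this confirms the final sentence of the lemma, namely that $R_n$ generates the $n$-torsion subgroup of $\Br(U_K)/\Br_1(U_K)$.
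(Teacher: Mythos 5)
Your proposal is correct and follows essentially the same route as the paper: both directions rest on the localisation sequence $0 \to \Br(U_K) \to \Br(V_K) \to \bigoplus_i H^1(K,\Bbb Q/\Bbb Z)$, the residue table of Proposition \ref{nonclosedresidues}, and (for necessity) the identification of $\partial(\Br_1(V_K))$ as the kernel of the map $\phi$ coming from the exact sequence of units/divisors/Picard. Where the paper writes out $\phi$ via identification (\ref{Z4ter}) and checks that $\phi(\partial_K(R_n))$ must vanish, you instead name $\ker\phi$ directly as the sextuples $(a,b,c,a,b,c)$ with $abc=1$ via the two toric coordinates $f/g$ and $u/v$ and their divisors on $\overline U$ from (\ref{div}) -- this is the same computation in a slightly different parametrization, and both correctly extract $-1,\,(\sqrt d-\sqrt m)/2 \in K^{\times n}$ using $(\sqrt d-\sqrt m)(\sqrt d+\sqrt m)=-4$.
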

\begin{proof}

Note that under the hypothesis $-1 \in K^{\times n}$, formula (\ref{obvious}) shows that the condition
$ \frac{\sqrt{d}-\sqrt{m}}{2} \in K^{\times n}  $ is independent of the choice of the square roots of $d$ and $m$
in $\bar{k}$.

If $\mu_n\subset K $ and $-1, (\sqrt{d}-\sqrt{m})/2\in K^{\times n}$, then $R_n\in \Br(U_K)$  by Proposition
\ref{nonclosedresidues} and it has
  image of order $n$ in $\Br({\overline U}) \simeq \Bbb Q/\Bbb Z$ by   Theorem \ref{tr-br}.
  This proves one implication.

\medskip

Let us  prove the converse statement.
Assume $(\Bbb Z/n)  \subset \Br(U_K)/\Br_1(U_K)$.
The isomorphism $\Br({\overline U}) \cong (\Bbb Q/ \Bbb Z)(-1)$ given by
Proposition \ref{br-tran} is Galois equivariant.
From  Lemma \ref{twist}, we then get $\mu_n\subset  K$.

Since
the lines
 $l_{i}^{-}$ in Lemma \ref{lines} are defined over $K \subset \bar{k}$ for $1\leq i\leq 6$, the open subset
 $$V=U_K\setminus \{ \bigcup_{i=1}^6  l_i^{-}\}$$ is
defined
 over $K$. It
 satisfies
 $\Pic(V_{\bar k})=0$ since $V_{\bar k} \cong \mathbb G_{m,\bar k}^2$.
 One has the following commutative
diagram
of exact sequences
 \begin{equation} \label{bd}
 \xymatrix{
0\ar[r] & \Br(K)=\Br_1(U_K) \ar[r]\ar[d] & \Br_1(V) \ar[r]^-{\partial_K} \ar[d] & \oplus_{i=1}^6  H^1(K, \Bbb Q/\Bbb Z)_{l_i^{-}}   \ar[d]^{=} \\
0 \ar[r] & \Br(U_K)  \ar[r]    & \Br(V) \ar[r]^-{\partial_K}  & \oplus_{i=1}^6 H^1(K, \Bbb Q/\Bbb Z)_{l_i^{-}}    }
 \end{equation}
 by \cite[Theorem 3.4.1, Remark 3.3.2]{CT}, \cite[Lemma 6.1]{Sansuc} and Theorem \ref{br} (which gives $\Br(K)=\Br_{1}(U_{K})$).
 From Prop. \ref{unit}
 we know that $\bar{k}^{\times}=\bar{k}[U]^{\times}$ and that $ \Pic({\overline U}) $ is a lattice.
From the exact sequence
 of lattices with trivial Galois action
$$ 1\rightarrow \bar{k}[V]^\times /\bar k^\times \xrightarrow{div} \oplus_{i=1}^6 \Bbb Z l_{i}^{-} \xrightarrow{\psi} \Pic({\overline U}) \rightarrow 1,$$
Galois cohomology
 gives the long exact sequence
$$  0=H^1(K, \Pic({\overline U})) \rightarrow H^2(K, \bar k[V]^\times/\bar k^\times) \xrightarrow{div} \oplus_{i=1}^6H^2(K, \Bbb Z)_{l_i^{-}} \rightarrow H^2(K, \Pic({\overline U})).$$
That $H^1(K, \Pic({\overline U}))=0$ follows from the fact that $\Pic({\overline U})$ is a lattice with trivial $\Gal(\bar{k}/K)$
action.
 The following diagram
$$\xymatrix{
H^2(K, \bar k[V]^\times) \ar[r]^{\simeq}\ar[d]_{div}  & \Br_1(V) \ar[d]^{\partial_K} \\
\oplus_{i=1}^6 H^2(K,  {\Bbb Z})_{    l_i^{-}    }   & \oplus_{i=1}^6 H^1(K, \Bbb Q/\Bbb Z)_{l_i^{-} } \ar[l]_-{\simeq} } $$ commutes up to sign by  \cite[Remark 3.3.2]{CT} and \cite[Lemma 2.1]{CTXu09}.

 Since $V$ has $K$-points, 
 the exact sequence 
$$ 1 \to    \bar{k}^\times \to  \bar{k}[V]^\times \to  \bar{k}[V]^\times/   \bar{k}^\times \to 1$$
splits as a sequence of Galois modules.
From identification  (\ref{Z4bis}) one gets   $$H^2(K, \Pic({\overline U})) \simeq \oplus_{i=1}^4 H^1(K, \Bbb Q/\Bbb Z)_{[l_i^{-}]}.$$
One then obtains the following  exact sequence
\begin{equation} \label{ext} 0\rightarrow \Br(K) \rightarrow \Br_1(V)  \xrightarrow{\partial_K}  \oplus_{i=1}^6 H^1(K, \Bbb Q/\Bbb Z)_{l_i^{-} } \xrightarrow{\phi} \oplus_{i=1}^4 H^1(K, \Bbb Q/\Bbb Z)_{[l_i^{-}]} \end{equation}
which extends the first line of (\ref{bd}). Here $\phi$ is induced by $\psi$. 
By   (\ref{Z4ter}), it   
is given on $(\chi_1,\cdots, \chi_6) \in \oplus_{i=1}^6 H^1(K, \Bbb Q/\Bbb Z)_{l_i^{-} }$
by the formula
 $$\phi(\chi_1,\cdots, \chi_6)=(\chi_1+\chi_5+\chi_6,\chi_2-\chi_5,\chi_3-\chi_6, \chi_4+\chi_5+\chi_6).$$

By Proposition \ref{nonclosedresidues},
one has
$$\partial_K(R_n)=(-1, -\frac{1}{2}(\sqrt{d}+\sqrt{m}),-1,\frac{\sqrt{d}-\sqrt{m}}{2} ,-1, \frac{\sqrt{d}-\sqrt{m}}{2})\in \oplus_{i=1}^6 K^\times/K^{\times n}.$$
 We now get:
$$ \phi(\partial_K(R_n))=(\frac{\sqrt{d}-\sqrt{m}}{2}, \frac{\sqrt{d}+\sqrt{m}}{2}, \frac{\sqrt{d}+\sqrt{m}}{2}, -(\frac{\sqrt{d}-\sqrt{m}}{2})^2) \in \oplus_{i=1}^4 K^\times/K^{\times n} .$$

By Theorem \ref{tr-br},   the class $R_n \in \Br(V)[n]$ is of order $n$, since it is  of order $n$ by going
over to $\bar k$.
By hypothesis, we have $  \mathbb Z/n \subset [\Br(U_K)/\Br_1(U_K)][n] \subset  \Br(\overline U)[n] \simeq \Bbb Z/n$.
The restriction map  $ \Br(\overline U)[n] \to \Br(V_{\bar{k}})[n]$ is an isomorphism, and the last
group is spanned by the class of $R_{n}$, which comes from $R_{n} \in \Br(V)$.
Thus there exists
$\mathcal B \in \Br(U_K)$ such that $R_n$ and $\mathcal B$  have the same image in $\Br(\overline U)$. Since $R_n, \mathcal B  $ are both contained in $ \Br(V)$, one concludes
$R_n-\mathcal B \in \Br_1(V)$.
 Then
$$\phi(\partial_K(R_n-\mathcal B))=\phi(\partial_K(R_n)) $$
$$=(\frac{\sqrt{d}-\sqrt{m}}{2}, \frac{\sqrt{d}+\sqrt{m}}{2}, \frac{\sqrt{d}+\sqrt{m}}{2}, -(\frac{\sqrt{d}-\sqrt{m}}{2})^2) \in \oplus_{i=1}^4 K^\times/K^{\times n}$$
is trivial.  This implies $-1 \text{ and } (\sqrt{d}-\sqrt{m})/2\in K^{\times n}$.
\end{proof}

 \begin{lem} \label{cores}
 Let $K=k(\sqrt{m}, \sqrt{d})$. Suppose  that $R_n=(f,g)_{\zeta_{n}}$ belongs to  $\Br(U_{K})$.
 Suppose $\mu_n\subset k$. Then the image of  $\mathcal B : = Cor_{K/k}(R_n) \in  \Br(U)$
  in $\Br(U)/\Br_1(U) \subset (\Bbb  Z/n)$ generates  a cyclic group  of order
  $n_1=n/gcd(n,[K:k])$.
 \end{lem}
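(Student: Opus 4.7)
The plan is to compute the image of $\mathcal B$ in $\Br({\overline U}) \simeq \Bbb Q/\Bbb Z(-1)$ and to exploit the injection $\Br(U)/\Br_1(U) \hookrightarrow \Br({\overline U})$, which is tautological from the definition of $\Br_1(U)$ as the kernel of base change to $\bar k$. The order of $\mathcal B$ in $\Br(U)/\Br_1(U)$ will then coincide with the order of its image in $\Br({\overline U})$, so the whole task reduces to identifying this image.

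First I would note that, since $\mu_n\subset k$, Lemma \ref{twist} together with Proposition \ref{br-tran} shows that $g=\Gal(\bar k/k)$ acts trivially on $\Br({\overline U})[n] \simeq \Bbb Z/n$. Let $\alpha\in \Br({\overline U})[n]$ denote the image of $R_n$; by Theorem \ref{tr-br}, $\alpha$ is a generator of $\Br({\overline U})[n]$.

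Next I would invoke the Mackey relation $\Res_{U_K/U}\circ \Cor_{K/k} = \sum_{\sigma \in \Gal(K/k)} \sigma$ on $\Br(U_K)$, which is valid because $K/k$ is Galois (at most biquadratic, since $K=k(\sqrt m,\sqrt d)$) and is a standard functoriality of restriction/corestriction for the Galois \'etale cover $U_K\to U$ with $\mathbb G_m$-coefficients. Applied to $R_n$ it yields
\[
\Res_{U_K/U}(\mathcal B) \; = \; \sum_{\sigma \in \Gal(K/k)} \sigma(R_n) \quad \text{in } \Br(U_K).
\]
Composing further with the restriction $\Br(U_K)\to\Br({\overline U})$, and using that this composite equals the direct restriction $\Br(U)\to\Br({\overline U})$ when applied to $\mathcal B\in\Br(U)$, the image of $\mathcal B$ in $\Br({\overline U})$ equals $\sum_{\sigma}\sigma(\alpha)$. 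Since the action of $g$ (hence of $g_K$) on $\Br({\overline U})[n]$ is trivial, this sum is $[K:k]\cdot \alpha$.

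The element $[K:k]\cdot \alpha \in \Bbb Z/n$ has exact order $n/\gcd(n,[K:k]) = n_1$, which is therefore the order of $\mathcal B$ in $\Br(U)/\Br_1(U)$. The one point requiring some care is the Mackey formula in its \'etale-cohomological form and its compatibility with pullback to ${\overline U}$; both follow from the functoriality of the edge maps of the Hochschild--Serre spectral sequences for $\bar k/K$ and $\bar k/k$ under the norm along $\Gal(K/k)$, and are essentially standard.
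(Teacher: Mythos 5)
Your proof is correct and follows essentially the same route as the paper's: both compute the image of $\mathcal B$ in $\Br(\overline U)$ via the restriction--corestriction formula, note that $\mu_n\subset k$ forces the Galois action on $\Br(\overline U)[n]$ to be trivial so each conjugate of $R_n$ equals $R_n$ there, and conclude that the image is $[K:k]\cdot R_n$. The paper restricts to $\bar k$ in one step while you pass through $\Br(U_K)$ first, but this is an inessential reorganization of the same argument.
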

\begin{proof} In $\Br(\overline U)$,
	one has $$Res_{k/\bar k}(\mathcal B)=Res_{k/\bar k}\circ Cor_{K/k}(R_n)=\sum_{\sigma}R_n^{\sigma},$$
	where $\sigma$ runs through the embeddings of $K$ into $\bar k$.
	Since $\mu_{n}\subset k$, one has $R_n^{\sigma}=R_n$. 
	Therefore $Res_{k/\bar k}(\mathcal B)=[K:k] \cdot R_n$ in $\Br(\overline U)$, and the proof is completed.
\end{proof}

\begin{lem} \label{br-odd} Let $K=k(\sqrt{m}, \sqrt{d})$. Suppose $\mu_n\subset k$.
Let $n_1=n/gcd(n,[K:k])$.

 1) Assume $-1  \in K^{\times n}$ and $(\sqrt{d}-\sqrt{m})/2\in K^{\times n}$. Then the element
   $\mathcal B : = Cor_{K/k}(R_n)$ belongs to $\Br(U)$ and generates
   the cyclic subgroup of order $n_{1}$ of
     $\Br(U)/\Br_1(U)$.

 2) Suppose $n$ is odd. Then
  $\Br(U)/\Br_1(U) \supset (\Bbb  Z/n)$ if and only if $(\sqrt{d}-\sqrt{m})/2\in K^{\times n}$.
 In that case, the element
   $\mathcal B : = Cor_{K/k}(R_n)$ belongs to $\Br(U)[n]$ and generates the cyclic subgroup
   of order $n$ of  $\Br(U)/\Br_1(U)$.
\end{lem}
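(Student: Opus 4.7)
The plan is to derive both parts directly from Lemma \ref{br-nec} and Lemma \ref{cores}, using two structural observations: first, both $\Br(U)/\Br_1(U)$ and $\Br(U_K)/\Br_1(U_K)$ embed into $\Br(\overline U) \simeq \mathbb{Q}/\mathbb{Z}(-1)$; second, $\mathbb{Q}/\mathbb{Z}$ contains a unique cyclic subgroup of each given finite order, so specifying the order of a class already pins down the cyclic subgroup it generates.

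For part 1), I would begin by noting that the hypotheses $-1 \in K^{\times n}$ and $(\sqrt{d}-\sqrt{m})/2 \in K^{\times n}$, together with $\mu_n \subset k \subset K$, are precisely the conditions of Lemma \ref{br-nec}. Hence $R_n$ lies in $\Br(U_K)$ and has order $n$ in $\Br(\overline U)$. Lemma \ref{cores} then applies: the corestriction $\mathcal{B} := \mathrm{Cor}_{K/k}(R_n)$ lies in $\Br(U)$, and its image in $\Br(\overline U)$ equals $[K:k]\cdot R_n$, an element of order $n_1 = n/\mathrm{gcd}(n,[K:k])$. By the uniqueness of cyclic subgroups of a given order inside $\mathbb{Q}/\mathbb{Z}$, the class of $\mathcal{B}$ generates the unique cyclic subgroup of order $n_1$ in $\Br(U)/\Br_1(U)$.

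For part 2), I would exploit two automatic simplifications when $n$ is odd: first, $-1 = (-1)^n \in K^{\times n}$; second, since $[K:k]$ divides $4$, we have $\mathrm{gcd}(n,[K:k]) = 1$, so $n_1 = n$. The ``if'' direction is then immediate from part 1) applied to the hypothesis $(\sqrt{d}-\sqrt{m})/2 \in K^{\times n}$. For the ``only if'' direction, the functoriality of the restriction map $\Br(U) \to \Br(U_K)$ carries $\Br_1(U)$ into $\Br_1(U_K)$, and since both quotients inject into $\Br(\overline U)$ we obtain a natural inclusion $\Br(U)/\Br_1(U) \subset \Br(U_K)/\Br_1(U_K)$ inside $\Br(\overline U)$. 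Thus if $\mathbb{Z}/n \subset \Br(U)/\Br_1(U)$, then $\mathbb{Z}/n \subset \Br(U_K)/\Br_1(U_K)$, and Lemma \ref{br-nec} forces $(\sqrt{d}-\sqrt{m})/2 \in K^{\times n}$ (the other condition $-1 \in K^{\times n}$ being automatic).

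I do not anticipate any genuine obstacle: all the substantive work---computing residues of $R_n$, verifying that it extends to $\Br(U_K)$, and showing that corestriction acts on $\Br(\overline U)$ as multiplication by $[K:k]$---has already been carried out in the two preceding lemmas. The only mildly delicate point is the inclusion $\Br(U)/\Br_1(U) \subset \Br(U_K)/\Br_1(U_K)$ used in the ``only if'' direction of part 2), which is however purely formal once both groups are regarded as subgroups of the single group $\Br(\overline U)$.
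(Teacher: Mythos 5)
Your proposal is correct and follows essentially the same route as the paper: part 1) is Lemma \ref{br-nec} (to place $R_n$ in $\Br(U_K)$ with full order $n$ over $\overline{k}$) combined with Lemma \ref{cores} (to compute the order $n_1$ of the corestricted class), and part 2) reduces to part 1) via $n_1=n$, $-1\in K^{\times n}$ for $n$ odd, with the converse coming from the inclusion chain $\Br(U)/\Br_1(U)\subset \Br(U_K)/\Br_1(U_K)\subset\Br(\overline U)$ and Lemma \ref{br-nec}. The only cosmetic difference is that you justify the inclusion of quotients and invoke uniqueness of cyclic subgroups in $\mathbb Q/\mathbb Z$ explicitly, steps the paper leaves implicit.
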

\begin{proof}

1) Suppose  $-1 \text{ and } (\sqrt{d}-\sqrt{m})/2\in K^{\times n}$, then $R_n\in \Br(U_K)$ by the computation of residues in
Proposition \ref{nonclosedresidues}. By Lemma \ref{cores}, the image of
  $\mathcal B \in \Br(U)$ in  $\Br(U)/\Br_1(U)$ is cyclic of order $n_{1}$.

2) Suppose $n$ is odd. Then $n=n_1$ and $-1\in K^{\times n}$. The sufficiency follows from 1).
The converse follows from  $$\Bbb  Z/n \subset \Br(U)/\Br_1(U) \subset  \Br(U_K)/\Br_1(U_K)  \subset \Br({\overline U}).$$
and Lemma \ref{br-nec}.
\end{proof}

\begin{lem} \label{lem-sur} Let $F=k(\sqrt{d})$ and $G=\Gal(F/k)$. Then the natural map $\Br(U)\rightarrow \Br(U_F)^G$ is surjective.
\end{lem}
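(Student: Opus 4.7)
If $d\in k^{\times 2}$ the statement is vacuous, so I assume $d\notin k^{\times 2}$ and $G=\langle\sigma\rangle$ is cyclic of order $2$. The plan is to apply the Hochschild--Serre spectral sequence
$$E_2^{p,q}=H^p(G,H^q_{\et}(U_F,\mathbb G_m))\Rightarrow H^{p+q}_{\et}(U,\mathbb G_m).$$
By Proposition \ref{unit}, $\overline k^\times=\overline k[U]^\times$, hence by Galois descent $F^\times=F[U]^\times$, so the $q=0$ row reads $E_2^{p,0}=H^p(G,F^\times)$. The edge map $\Br(U)\twoheadrightarrow E_\infty^{0,2}\subset E_2^{0,2}=\Br(U_F)^G$ is surjective onto $E_\infty^{0,2}$ by construction, so the lemma amounts to the vanishing of every differential leaving $E_r^{0,2}$. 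For $r\geq 4$ the target is zero, and for $r=3$ the target is a subquotient of $H^3(G,F^\times)$, which vanishes by the periodicity $H^3(G,-)\cong H^1(G,-)$ for cyclic $G$ together with Hilbert 90. The problem therefore reduces to showing $H^2(G,\Pic(U_F))=0$.

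For $G$ cyclic of order $2$, Tate periodicity gives $H^2(G,M)=M^G/N_GM$ with $N_G=1+\sigma$. Since $X_m$ has smooth $k$-points (e.g.\ $(0\!:\!0\!:\!1\!:\!0)$) and any smooth cubic surface over an infinite field with a rational point is $k$-unirational, $U$ has $k$-points and hence $U_F$ has $F$-points; combined with Proposition \ref{unit} and Hilbert 90 this yields $\Pic(U_F)=\Pic(\overline U)^{\Gal(\overline k/F)}$, a sublattice of the free $\mathbb Z$-module on $[l_1],\dots,[l_4]$ by (\ref{Z4}). I check $H^2(G,\Pic(U_F))=0$ case by case, using the Galois-action formulas derived in \S 3.

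Three subcases arise according to whether $\sqrt m\in F$. If $m\in k^{\times 2}$ then $F=k(\sqrt d)=K:=k(\sqrt m,\sqrt d)$, and formulas (\ref{rep}) combined with (\ref{l}) yield $\sigma[l_i]=-[l_i]$ for all $i$, so $M^G=0=N_GM$. If $md\in k^{\times 2}$ but $m\notin k^{\times 2}$, again $F=K$, and combining (\ref{rep}), (\ref{rho}) and (\ref{l}) produces $\sigma[l_i]=-[l_i]$ for $i=1,2,3$ together with $\sigma[l_4]=[l_1]-[l_2]-[l_3]+[l_4]$; direct computation then shows $M^G=N_GM=\mathbb Z\cdot([l_1]-[l_2]-[l_3]+2[l_4])$. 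If none of $m,d,md$ is a square then $F\subsetneq K$ with $[K:F]=2$ generated by some $\tau$; by (\ref{act2}) we have $\Pic(U_F)=\Pic(\overline U)^\tau=\oplus_{i=1}^3\mathbb Z[l_i]$, on which $\sigma$ acts by $-1$ via (\ref{act}), so again $M^G=0=N_GM$.

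The main potential pitfall is the bookkeeping involved in transporting the Galois action from $\Pic(\overline X)$, where the formulas (\ref{rep})--(\ref{rho}) are recorded, down to $\Pic(\overline U)$ via the relation $[L_j]=0$ of (\ref{l}). Once this identification is in place in each subcase, $H^2(G,\Pic(U_F))=0$, hence $d_2=0$, and the lemma follows.
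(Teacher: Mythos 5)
Your proof is correct and follows essentially the same route as the paper's: Hochschild--Serre for $G=\Gal(F/k)$ with $F[U]^\times=F^\times$, killing $H^3(G,F^\times)$ by periodicity and Hilbert~90, and then reducing to $H^2(G,\Pic(U_F))=\hat H^0(G,\Pic(U_F))=0$ via case analysis on the lattice $\Pic(\overline U)$ using the action formulas from \S 3. The only cosmetic difference is that the paper collapses your three subcases into two (treating ``$F\neq k(\sqrt m)$'' uniformly by embedding $\Pic(U_F)^G$ into $\Pic(\overline U)^g=0$, which subsumes your cases $m\in k^{\times2}$ and $[K:k]=4$), whereas you compute $\Pic(U_F)^G$ directly in each subcase; both verifications are correct.
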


\begin{proof}
 We may assume that $F/k$ is of degree 2.
  We know  that $F^{\times}=H^0(U_{F}, \Bbb G_{m})$ by Proposition \ref{unit}.
 This implies
  $$ H^3(G,H^0(U_{F},\Bbb G_{m}))=H^3(G,F^{\times}) = H^1(G, F^{\times})=0$$
 by periodicity of the cohomology of cyclic groups and by Hilbert's theorem 90.
  The spectral sequence
  $$ E_2^{p,q} = H^p(G, H^q(U_F, \Bbb G_m)) \Rightarrow H^{p+q}(U, \Bbb G_m).$$
   then gives an exact sequence
   $$ \Br(U) \to \Br(U_{F})^G \to H^2(G, \Pic(U_{F})),$$
  which by periodicity of  the cohomology of cyclic groups for Tate cohomology groups  reads
  $$ \Br(U) \to \Br(U_{F})^G \to \hat{H}^0(G, \Pic(U_F)).$$

a) Suppose $F\neq k(\sqrt{m})$. Since $\overline k[U]^\times =\overline k^\times$, the natural map $\Pic(U_F) \hookrightarrow \Pic({\overline U})^{g_F}$ is injective (in fact, it is an isomorphism since $U(F) \neq \emptyset$).
This implies that $\Pic(U_F)^G \hookrightarrow \Pic({\overline U})^{g}$ is injective.
Since  $$\Pic({\overline U})^{g} = \Pic(U_K)^{\Gal(K/k)}=0$$ with $K=F(\sqrt{m})$ by (\ref{act}) in the proof of Theorem \ref{br},
one has $\Pic(U_F)^G=0$, hence $\hat H^0(G, \Pic(U_F)) =0$.
 
b) Suppose $F= k(\sqrt{m})$. Let $\rho$ be the generator of $G$. By the computation in Theorem \ref{br}  for the case $k(\sqrt{d})=k(\sqrt{m})\neq k$, the group
$\Pic(U_F)^{G}$ is generated by 
$$2[l_4] +[l_1] - [l_2]-[l_3]=(1+\rho)[l_4],$$
hence $\hat H^0(G, \Pic(U_F)) =0$.
\end{proof}

Let $K=k(\sqrt{d},\sqrt{m})$. Define  \begin{equation} \label{i} I=\{n\in \mathbb N: \mu_n \subset k \text { and } -1, \frac{\sqrt{d}-\sqrt{m}}{2} \in K^{\times n}\}. \end{equation}
If $p,q$ are coprime integers,  then $\mu_{pq} \subset k$ if and only if
$\mu_{p} \subset k$ and $\mu_{q} \subset k$. Similarly, for $p$ and $q$ coprime integers, and $\rho \in K^{\times}$,
one has $\rho \in K^{\times pq}$ if and only if $\rho \in K^{\times p}$ and $\rho \in  K^{\times q}$.
Going over to primary components, one concludes that if
 $p, q$ are integers in $I$, then the least common multiple $[p,q]$ of $p$ and $q$ is in $I$.
Therefore $I$ is a directed
set with respect to divisibility. The following theorem is the main result of this section.

\begin{thm} \label{br-gen} Let $K=k(\sqrt{d},\sqrt{m})$. Let
$$\label{i} I=\{n\in \mathbb N: \mu_n \subset k \text { and } -1, \frac{\sqrt{d}-\sqrt{m}}{2} \in K^{\times n}\}.$$
Then  $$\Br(U)/\Br_1(U)\cong \varinjlim_{n\in I} \mathbb Z/n .$$
In particular, if $I$ is  finite, for instance if $k$ is a number field, then $$\Br(U)/\Br_1(U)\cong \mathbb Z/N $$ where $N$ is the biggest integer in $I$.
\end{thm}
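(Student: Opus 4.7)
By Proposition \ref{br-tran}, the natural inclusion $\Br(U)/\Br_1(U) \hookrightarrow \Br(\overline U) \cong \mathbb Q/\mathbb Z(-1)$ realises the quotient as a subgroup of $\mathbb Q/\mathbb Z$, and any such subgroup is the directed union of its cyclic subgroups. Accordingly, the theorem reduces to identifying the set
\[ I' := \{ n \in \mathbb N : \mathbb Z/n \subset \Br(U)/\Br_1(U) \} \]
with $I$. I will establish both inclusions $I' \subset I$ and $I \subset I'$.

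For $I' \subset I$, assume $\mathbb Z/n \subset \Br(U)/\Br_1(U)$. The inclusion $\Br(U)/\Br_1(U) \hookrightarrow \Br(U_K)/\Br_1(U_K)$ (both embedded in $\Br(\overline U)$) combined with Lemma \ref{br-nec} yields $\mu_n \subset K$ and $-1, (\sqrt{d}-\sqrt{m})/2 \in K^{\times n}$. Since also $\Br(U)/\Br_1(U) \subset \Br(\overline U)^g$, the cyclic subgroup of order $n$ inside $\mathbb Q/\mathbb Z(-1)$ is $g$-fixed, and Lemma \ref{twist} then forces $\mu_n \subset k$. Hence $n \in I$.

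For $I \subset I'$, fix $n \in I$. Since $\mathbb Z/n \cong \bigoplus_{\ell \mid n} \mathbb Z/\ell^{v_\ell(n)}$ and every divisor of $n$ again lies in $I$ (the conditions defining $I$ pass to divisors), it suffices to produce $\mathbb Z/\ell^a \subset \Br(U)/\Br_1(U)$ for each prime power $\ell^a \in I$. For odd $\ell$, Lemma \ref{br-odd}(2) applies directly, since $\gcd(\ell^a, [K:k]) = 1$. For $\ell = 2$, Lemma \ref{br-odd}(1) only delivers $Cor_{K/k}(R_{2^a}) \in \Br(U)$ of order $2^a/\gcd(2^a, [K:k])$ in $\Br(U)/\Br_1(U)$, which is insufficient when $[K:k] \in \{2,4\}$. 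I expect closing this $2$-primary gap to be the main obstacle. My plan is to invoke Lemma \ref{lem-sur}, giving the surjection $\Br(U) \twoheadrightarrow \Br(U_F)^{\Gal(F/k)}$ with $F = k(\sqrt{d})$, and to combine it with an analogue of the corestriction argument applied over $F$, exploiting that $[K:F] \leq 2$. A key observation is that once $\mu_{2^a} \subset k$, the image of $R_{2^a}$ in $\Br(\overline U) \cong \mathbb Q/\mathbb Z(-1)$ generates the $g$-fixed cyclic subgroup of order $2^a$, so $R_{2^a}$ is Galois-equivariant modulo $\Br_1(U_K)$; one must then promote this mod-$\Br_1$ invariance to an honest $\Gal(F/k)$-invariant element of $\Br(U_F)$, which by Lemma \ref{lem-sur} lifts to a class in $\Br(U)$ with the required image of order $2^a$ in $\Br(\overline U)$.

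Once both inclusions are proved, $\Br(U)/\Br_1(U) = \bigcup_{n \in I} \mathbb Z/n = \varinjlim_{n \in I} \mathbb Z/n$ inside $\mathbb Q/\mathbb Z$. For the final assertion, if $I$ is finite then, being directed under divisibility, it admits a maximum $N$, and the colimit collapses to $\mathbb Z/N$. Over a number field $k$, the group $\mu_\infty(k)$ is finite, so only finitely many $n$ satisfy $\mu_n \subset k$; hence $I$ is finite, and the last sentence of the theorem follows.
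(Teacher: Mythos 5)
Your reduction, the inclusion $I' \subset I$, and the treatment of odd prime powers all follow the paper's route correctly. But the $2$-primary case, which you correctly flag as ``the main obstacle,'' is where essentially all of the technical content of the paper's proof lies, and your plan leaves a genuine gap there.

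Three concrete points on which the sketch, as stated, does not close. First, ``promoting the mod-$\Br_1$ invariance to an honest $\Gal(F/k)$-invariant element of $\Br(U_F)$'' is exactly the hard step. From Proposition \ref{br-tran} and $\mu_n \subset k$ you get $\mathcal B^\sigma - \mathcal B \in \Br_1(U_F) = \Br(F)$; to apply Lemma \ref{lem-sur} you need this difference to vanish (or at least to lie in $(1-\sigma)\Br(F)$ so you can correct $\mathcal B$ by a constant), and neither is automatic. The paper has to prove $\mathcal B^\sigma = \mathcal B$ outright, by evaluating at a concrete rational point such as $(-2,0,\sqrt{d})$ and performing a fairly lengthy symbol computation in $\Br(F)$ — including a separate subcase when $\mu_{2n} \not\subset k$, where the cyclotomic twist of $\zeta_{2n}$ introduces an extra factor $(1+n)$. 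Second, the naive corestriction $Cor_{K/F}(R_n)$ loses a factor of $2$: by Lemma \ref{cores}, applied over $F$, its image in $\Br(\overline U)$ has order only $n/\gcd(n,[K:F])$. The paper circumvents this by working with the $2n$-torsion class $(\tfrac{f}{g},-\tfrac{u}{v})_{\zeta_{2n}}$ (using $-1\in K^{\times n}$ to get $\mu_{2n}\subset K$), and then has to add a correction term $Cor_{K/F}(\tfrac{u_1}{v_1},\tfrac{\sqrt d-\sqrt m}{2})_{\zeta_{2n}}$, with $u_1,v_1$ explicit quadrics, precisely so that all residues over $F$ cancel (the verification uses the residue table (\ref{res}) and a fresh computation (\ref{res-1}) over the lines $l_i^-$ and their images below $F$). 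Your ``analogue of the corestriction argument over $F$'' does not capture either adjustment. Third, the argument splits into several cases according to the structure of $K/k$ ($[K:k]=4$; $m\in k^{\times 2},\, d\notin k^{\times 2}$; $d\in k^{\times 2},\, m\notin k^{\times 2}$; $md\in k^{\times 2},\, d\notin k^{\times 2}$; $K=k$), and the case $md\in k^{\times 2}$ requires a separate observation — that the hypothesis $\tfrac{\sqrt d - \sqrt m}{2}\in K^{\times 2^s}$ forces $\sqrt{-d}\in k$ and hence $n=2$ — which has no counterpart in your plan. So the overall strategy matches the paper, but the proposal, as written, postpones rather than proves the decisive step.
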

\begin{proof}
One has $\Br(U)/\Br_1(U)\subset \mathbb Q/\mathbb Z (-1)^g $ by Proposition \ref{br-tran}. Hence
$\Br(U)/\Br_1(U)$  is
a subgroup of the
abelian group $\mathbb Q/\mathbb Z$.
We thus only need to show:
\begin{equation} \label{basicequiv}
  {\mathbb Z/n} \subset  \Br(U)/\Br_1(U)   \ \ \ \text{ if and only if } \ \ \ n\in I
  \end{equation}
and we only need to show this for $n$ a power of a prime number.

Suppose $ \Br(U)/\Br_1(U) \supset \mathbb Z/n$. Then
 $\mu_n\subset k$ by Proposition \ref{br-tran}
 and Lemma \ref{twist}. We have
 $$\Br(U)/\Br_1(U) \subset  \Br(U_K)/\Br_1(U_K)  \subset \Br({\overline U}).$$
 Thus
$ \Bbb  Z/n  \subset  \Br(U)/\Br_1(U) $ implies $\Bbb  Z/n \subset \Br(U_K)/\Br_1(U_K)$.
Then $ n\in I$  follows from Lemma \ref{br-nec}.
This establishes one direction of the equivalence (\ref{basicequiv}).

Suppose $n \in I$ is an odd integer. Lemma \ref{br-odd} gives the reverse
direction in (\ref{basicequiv}) in a very precise form, namely
the image of the element $ Cor_{K/k}(R_n) \in \Br(U)[n]$  generates
the cyclic subgroup of order $n$ of
$ \Br(U)/\Br_1(U)$.

\bigskip

To complete the proof of the theorem, it is now enough to prove:
\begin{equation}\label{basicimplic}
n=2^s  \ \text{and} \ n\in I \Longrightarrow  \Br(U)/\Br_1(U) \supset \mathbb Z/n.
\end{equation}

Since $-1 \in K^{\times n}$, one concludes that $\mu_{2n}\subset K$.
Fix a primitive $2n$-th root of unity $\zeta_{2n}\in K$.
Essentially the same computations as in Proposition  \ref{nonclosedresidues} give:
\begin{equation} \label{res}  \partial_{D} (\frac{f}{g}, -\frac{u}{v})_{\zeta_{2n}}= \begin{cases} \frac{\sqrt{d}+\sqrt{m}}{2} \in K(D)^\times/K(D)^{\times 2n} \ \ \ & \text{$D=l_2^{-}$} \\
-1 \in K(D)^\times/K(D)^{\times 2n} \ \ \ & \text{$D=l_3^{-}$} \\
\frac{\sqrt{m}-\sqrt{d}}{2} \in K(D)^\times/K(D)^{\times 2n} \ \ \ & \text{$D=l_4^{-}$} \\
\frac{\sqrt{d}-\sqrt{m}}{2} \in K(D)^\times/K(D)^{\times 2n} \ \ \ & \text{$D=l_6^{-}$} \\
 1 \in K(D)^\times/K(D)^{\times 2n} \ \ \ & \text{$D\in \{l_1^{-}, l_5^{-}\}$}
  \end{cases} \end{equation}

\medskip

 Let $F=k(\sqrt{d})$. If $K/F$ is of degree 2,  let $\tau$ be the generator of $\Gal(K/F)$.
 If $F/k$ is of degree 2, let $\sigma$ denote the generator of $\Gal(F/k)$.
 We break up the discussion according to the structure of the field extension $K/k$.
 In each case, we shall produce an explicit element  $\mathcal B \in \Br(U_{F})$
 which is of order $n$ over the algebraic closure and
 which is invariant under $\Gal(F/k)$. Lemma \ref{lem-sur} will then ensure  that it comes
  from a  class in $\Br(U)$ whose image in $\Br(U)/\Br_{1}(U)$ is of order $n$.

\bigskip

$\bullet$
 Suppose $[K:k]=4$.
 Let $$\mathcal B=Cor_{K/F}(\frac{f}{g}, -\frac{u}{v})_{\zeta_{2n}}+Cor_{K/F}(\frac{u_1}{v_1}, \frac{\sqrt{d}-\sqrt{m}}{2})_{\zeta_{2n}} \in \Br(F(X))$$
where  $u_1=y-2t $ and $ v_1=x+\frac{1}{2} (\sqrt{d}-\sqrt{m})y- z+\sqrt{m}t$.
Since
$$\{u_1=0\} \cap X =L_2+l_2^{-}+l_2 $$$$  \{v_1=0\} \cap X  =l_6^{-}+\tau(l_4^{-})+l_2  $$ by  Bezout's theorem,  one obtains that
\begin{equation} \label{res-1} \partial_D (\frac{u_1}{v_1}, \frac{\sqrt{d}-\sqrt{m}}{2})_{\zeta_{2n}} =  \frac{\sqrt{d}-\sqrt{m}}{2} \in K(D)^\times/K(D)^{\times 2n}
  \end{equation}
for $D\in \{l_2^{-}, \tau(l_4^{-}), l_6^{-}\}$.
Since $(\sqrt{d}-\sqrt{m})/2 \in K^{\times n}$, we have
$$-1=N_{K/F}((\sqrt{d}-\sqrt{m})/2 ) \in F^{\times n}  \ \ \ \text{and} \ \ \ \mu_{2n}\subset F.  $$

When $D$ is defined over $F$, the corestriction map
$$ H^1(K(D), \Bbb Z/2n)=K(D)^\times/K(D)^{\times 2n}  \xrightarrow{Cor_{K/F}} H^1(F(D), \Bbb Z/2n) = F(D)^\times/F(D)^{\times 2n} $$
is given by norm. Since the residue maps commute with corestriction, the residues of $\mathcal B$ at $D\in \{l_i^{-}\}_{i=1}^3$ are trivial by  (\ref{res}) and (\ref{res-1}).

Suppose we have $D\in \{l_i^{-}\} $  with 
 $i \in \{4,5,6\}$. Then  $D$ is not defined over $F$.  
 One can identify $K(D)$ 
 with $F({\mathcal D})$
where $\mathcal D$ is the integral divisor on  $X_{F}$ which is the image of the divisor $D$ on $X_{L}$
via the projection map $X_{L} \to X _{F}$. We shall say that $\mathcal D$ is below $D$.
   Then $\tau$ induces an isomorphism from $K(\tau D)$ to $F(\mathcal D)$.

For $\mathcal D$ below $l_4^{-}$, one has
$$ \partial_{\mathcal D} ({\mathcal B} )=\frac{\sqrt{m}-\sqrt{d}}{2} \cdot (\frac{\sqrt{d}+\sqrt{m}}{2})^{-1} =(\frac{\sqrt{m}-\sqrt{d}}{2})^2 \in F(\mathcal D)^\times /F(\mathcal D)^{\times 2n} $$ by (\ref{res}), (\ref{res-1}) and the above identification. For $\mathcal D$ below $l_6^{-}$, one has
$$ \partial_{\mathcal D} ({\mathcal B} )=1  \in F(\mathcal D)^\times /F(\mathcal D)^{\times 2n} $$ by (\ref{res}), (\ref{res-1}) and the above identification.
Since $$\frac{\sqrt{d}-\sqrt{m}}{2} \in K^{\times n} \subset K(D)^{\times n} = F(\mathcal D)^{\times n} ,$$ the class $ \partial_{\mathcal D} ({\mathcal B} )$ is trivial in $H^1(F(\mathcal D), \Bbb Z/2n)$.
We thus get
\begin{equation}\label{nonramF}
 \mathcal B\in \Br(U_F).
\end{equation}
 Note that $\mu_{2n}\subset F$.   Then $\mathcal B$ is of order $n$ in $ \Br({\overline U})$ by Lemma \ref{cores} (replacing  $k$ by $F$).

Since we have $\mu_n\subset k$, Proposition \ref{br-tran} shows that
 the Galois group $\Gal(\overline k/k)$ acts trivially on the unique subgroup of order $n$ in $\Br({\overline U})$.
 This implies that $\mathcal B-\mathcal B^\sigma \in \Br_1(U_F)$, and $\Br_1(U_F)=\Br(F)$ by
 Theorem \ref{br}. Let $A=\mathcal B-\mathcal B^\sigma\in \Br(F)$.
 We shall prove that $A=0$, hence $\mathcal B=\mathcal B^\sigma$.
 
 \medskip
 
We need to distinguish two subcases.
\medskip

Subcase a).  Suppose $\mu_{2n}\subset k$. By evaluating $\mathcal B$ and $\mathcal B^\sigma$ at the special point $(-2,0,\sqrt{d})$ in $U(F)$, one obtains  
\begin{align*}
A=&Cor_{K/F}(\frac{-2\sqrt{d}(\sqrt{m}-\sqrt{d})}{-m+\sqrt{md}+2\sqrt{m}},\frac{-\sqrt{m}}{\sqrt{d}-2})_{\zeta_{2n}}-Cor_{K/F}(\frac{-2\sqrt{d}}{\sqrt{d}-\sqrt{m}+2},\frac{2}{\sqrt{m}-\sqrt{d}})_{\zeta_{2n}}\\
     &+Cor_{K/F}(\frac{2}{\sqrt{d}-\sqrt{m}+2},\frac{\sqrt{d}-\sqrt{m}}{2})_{\zeta_{2n}}-Cor_{K/F}(\frac{2}{\sqrt{d}-\sqrt{m}+2},\frac{-\sqrt{d}-\sqrt{m}}{2})_{\zeta_{2n}}
     \end{align*}
in $\Br(F)$.
Since $(\alpha, \beta)_{\zeta_{2n}}= (\alpha^{-1}, \beta^{-1})_{\zeta_{2n}}$ in $\Br (K)$ for $\alpha, \beta\in K^\times$, and  $((1-\alpha)^{-1}, \alpha)_{\zeta_{2n}} =0$ for any $\alpha\neq 0,1$ in $K$, one has
\begin{align*}  &  (\frac{-2\sqrt{d}(\sqrt{m}-\sqrt{d})}{-m+\sqrt{md}+2\sqrt{m}},\frac{-\sqrt{m}}{\sqrt{d}-2})_{\zeta_{2n}}=(-\frac{\sqrt{m}(\sqrt{m}+\sqrt{d}-2)}{4\sqrt{d}}, -\frac{\sqrt{d}-2}{\sqrt{m}})_{\zeta_{2n}}  \\
& = (-\frac{\sqrt{m}(\sqrt{m}+\sqrt{d}-2)}{4\sqrt{d}} \cdot (1+\frac{\sqrt{d}-2}{\sqrt{m}})^{-1}, -\frac{\sqrt{d}-2}{\sqrt{m}})_{\zeta_{2n}} = (-\frac{m}{4\sqrt{d}}, -\frac{\sqrt{d}-2}{\sqrt{m}})_{\zeta_{2n}} \end{align*}
 in $\Br(K)$. 

 Similarly, one has
\begin{align*} & (\frac{-2\sqrt{d}}{\sqrt{d}-\sqrt{m}+2},\frac{2}{\sqrt{m}-\sqrt{d}})_{\zeta_{2n}}= (\frac{\sqrt{d}-\sqrt{m}+2}{-2\sqrt{d}},\frac{\sqrt{m}-\sqrt{d}}{2})_{\zeta_{2n}} \\
& = (\frac{\sqrt{d}-\sqrt{m}+2}{-2\sqrt{d}}\cdot (1-\frac{\sqrt{m}-\sqrt{d}}{2})^{-1},\frac{\sqrt{m}-\sqrt{d}}{2})_{\zeta_{2n}}=(\frac{-1}{\sqrt{d}},\frac{\sqrt{m}-\sqrt{d}}{2})_{\zeta_{2n}} .
\end{align*}
Therefore
\begin{align*} A =&Cor_{K/F}(-\frac{m}{4\sqrt{d}}, -\frac{\sqrt{d}-2}{\sqrt{m}})_{\zeta_{2n}}-Cor_{K/F}(\frac{-1}{\sqrt{d}},\frac{\sqrt{m}-\sqrt{d}}{2})_{\zeta_{2n}} \\
& +Cor_{K/F}(\frac{2}{\sqrt{d}-\sqrt{m}+2},(\frac{\sqrt{d}-\sqrt{m}}{2})^2)_{\zeta_{2n}}\\
     =&(-\frac{m}{4\sqrt{d}}, \frac{m-4\sqrt{d}}{-m})_{\zeta_{2n}}+(-\frac{1}{\sqrt{d}},-1)_{\zeta_{2n}}.
\end{align*}
Since $(\alpha, -\alpha)_{\zeta_{2n}}=0$ in $\Br(F)$ for any $\alpha\in F^\times$, one has
\begin{align*}  & (-\frac{m}{4\sqrt{d}}, \frac{m-4\sqrt{d}}{-m})_{\zeta_{2n}}= (-\frac{m}{4\sqrt{d}}, \frac{m}{4\sqrt{d}}\cdot \frac{m-4\sqrt{d}}{-m})_{\zeta_{2n}}=(-\frac{m}{4\sqrt{d}}, 1-\frac{m}{4\sqrt{d}})_{\zeta_{2n}} \\
& = (-1, 1-\frac{m}{4\sqrt{d}})_{\zeta_{2n}} = (-1, \frac{(\sqrt{d}-2)^2}{-4\sqrt{d}})_{\zeta_{2n}} =(-1, \frac{1}{-4\sqrt{d}})_{\zeta_{2n}}=(-1, -\frac{1}{\sqrt{d}})_{\zeta_{2n}} .
\end{align*}
One concludes that $A=0$.

\medskip

Subcase b). Suppose $\mu_{2n} \not \subset k$. Since $\mu_{2n}\subset F$ and $[F:k]=2$, one has $F=k(\zeta_{2n})$. Note that $\mu_n\subset k$, one gets $\zeta_{2n}^\sigma=\zeta_{2n}^{1+n}$. Considering the action of Galois group on the cyclic algebra $(a,b)_{\zeta_{2n}}$ for $a,b \in K(U)^\times$, one has $$(a,b)_{\zeta_{2n}}^{\sigma} =(a^\sigma, b^\sigma)_{\zeta_{2n}^\sigma} . $$
Since the character given by $b^\sigma$ and $\zeta_{2n}^\sigma$ is the $(n+1)$-th power of the character given by $b^\sigma$ and $\zeta_{2n}$, one concludes
$$ (a^\sigma, b^\sigma)_{\zeta_{2n}^\sigma} =(n+1) (a^\sigma, b^\sigma)_{\zeta_{2n}} $$ in $\Br(K(U))$.

By evaluating $\mathcal B$ and $\mathcal B^\sigma$ at the special point $(-2,0,\sqrt{d})$ in $U(F)$, one concludes  
\begin{align*}
A=&Cor_{K/F}(\frac{-2\sqrt{d}(\sqrt{m}-\sqrt{d})}{-m+\sqrt{md}+2\sqrt{m}},\frac{-\sqrt{m}}{\sqrt{d}-2})_{\zeta_{2n}}-(1+n)Cor_{K/F}(\frac{-2\sqrt{d}}{\sqrt{d}-\sqrt{m}+2},\frac{2}{\sqrt{m}-\sqrt{d}})_{\zeta_{2n}}\\
     &+Cor_{K/F}(\frac{2}{\sqrt{d}-\sqrt{m}+2},\frac{\sqrt{d}-\sqrt{m}}{2})_{\zeta_{2n}}-(1+n)Cor_{K/F}(\frac{2}{\sqrt{d}-\sqrt{m}+2},\frac{-\sqrt{d}-\sqrt{m}}{2})_{\zeta_{2n}}
\end{align*}
in $\Br(F)$.
Since  $$\frac{2}{\sqrt{m}-\sqrt{d}}, \ \ \frac{-\sqrt{d}-\sqrt{m}}{2} \in K^{\times n} , $$ one obtains  
$$    n(\frac{-2\sqrt{d}}{\sqrt{d}-\sqrt{m}+2},\frac{2}{\sqrt{m}-\sqrt{d}})_{\zeta_{2n}}=n(\frac{2}{\sqrt{d}-\sqrt{m}+2},\frac{-\sqrt{d}-\sqrt{m}}{2})_{\zeta_{2n}} =0 $$ in $\Br(K)$. Therefore the computation in Subcase a) is still available and $A=0$.

We have thus proved $\mathcal B \in \Br(U_F)^G$. By Lemma \ref{lem-sur}, this implies that
  $\mathcal B$  is in the image of $\Br(U) \to \Br(U_{F})$.

\bigskip
$\bullet$
Suppose $m \in k^{\times 2}$ and $d \notin k^{\times 2}$.
Then $F=K$. Let $\mathcal B=R_n$ as in Theorem \ref{tr-br}. Then $\mathcal B\in \Br(U_F)$
by Lemma \ref{br-nec}. By Proposition \ref{br-tran}, we have $R_n^\sigma-R_n \in \Br_1(U_F)$.
By Theorem \ref{br}, we have $ \Br(F)=\Br_1(U_F)$. Thus 
$R_n^\sigma=R_n + A  \in \Br(F(U))$ with $A \in \Br(F)$.
 By evaluating $R_n$ and $R_n^\sigma$ at the special point $(-\sqrt{m},0,0)$, one concludes that $A=0$. Therefore $R_n\in \Br(U_F)^G$ and the result  again follows   from Lemma \ref{lem-sur}.

\bigskip
$\bullet$
Suppose $d \in k^{\times 2}$ and $m \notin k^{\times 2}$.
 Let $$\mathcal B=Cor_{K/k}(\frac{f}{g}, -\frac{u}{v})_{\zeta_{2n}}+Cor_{K/k}(\frac{u_1}{v_1}, \frac{\sqrt{d}-\sqrt{m}}{2})_{\zeta_{2n}} $$
where $$u_1=y-2 \ \ \ \text{ and } \ \ \ v_1=x+\frac{1}{2}(\sqrt{d}-\sqrt{m})y- z+\sqrt{m} . $$ The result follows from (\ref{nonramF}) and $F=k$.

\bigskip
$\bullet$
 Suppose $md \in  k^{\times 2}$ and $d \notin k^{\times 2}$.
Recall that  $n=2^s >1$. By the definition of $I$, one has $\frac{\sqrt{d}-\sqrt{m}}{2} =(\alpha+\beta \sqrt{d})^2$ 
 where $\alpha,\beta \in k^\times$. 
Therefore we have $\alpha^2+d\beta^2=0$. This implies $\sqrt{-d}\in k$. 
Therefore $F=k(\sqrt{d})=k(\sqrt{-1})\neq k$, hence $\sqrt{-1}\not \in k$, so $n=2$ by the definition of $I$.

 Let $\mathcal B=R_2$ in Theorem \ref{tr-br}. Then $\mathcal B\in \Br(U_F)$ by Lemma \ref{br-nec}.  
 Let  $\rho$ be the generator of $\Gal(F/k)$.
 By Proposition \ref{br-tran} and Theorem \ref{br}, there exists $$A\in \Br_1(U_F)=\Br(F) \ \ \text{ such that } \ \ R_2^\rho=R_2 + A .$$

 By evaluating $R_2$ and $R_2^\sigma$ at the special point $(-2,0,\sqrt{d})$ and a similar computation as in case $[K:k]=4$, one concludes
 \begin{align*}
A=&-(\frac{-2\sqrt{d}(\sqrt{m}-\sqrt{d})}{-m+\sqrt{md}+2\sqrt{m}},\frac{\sqrt{m}}{\sqrt{d}-2})_{-1}+(\frac{-2\sqrt{d}}{\sqrt{d}+\sqrt{m}+2},\frac{2}{\sqrt{m}+\sqrt{d}})_{-1}\\
=&-(\frac{-2\sqrt{d}(\sqrt{m}-\sqrt{d})}{-m+\sqrt{md}+2\sqrt{m}},\frac{-\sqrt{m}}{\sqrt{d}-2})_{-1}+0=-(-\frac{m}{4\sqrt{d}}, -\frac{\sqrt{d}-2}{\sqrt{m}})_{-1}\\
=&-(-\frac{m}{4\sqrt{d}}, \frac{(\sqrt{d}-2)^2}{m})_{\zeta_4}=-(-\frac{m}{4\sqrt{d}},\frac{m-4\sqrt{d}}{m})_{\zeta_4}= (-\frac{4\sqrt{d}}{m},1-\frac{4\sqrt{d}}{m})_{\zeta_4}\\
 =& (-1, 1-\frac{4\sqrt{d}}{m})_{\zeta_4}=(-1, \frac{(\sqrt{d}-2)^2}{m})_{\zeta_4}
\end{align*}
in $\Br(F)$,
where $\zeta_4$ is a primitive $4$-th root of unity. Note that $\sqrt{-1}, \sqrt{m} \in F$. Thus we have $A=0$.
Therefore $R_2\in \Br(U_F)^G$ and the result follows from Lemma \ref{lem-sur}.

\bigskip

$\bullet$
 The case $K=k$ follows from Lemma \ref{br-nec}.
\end{proof}

\begin{cor}\label{br-prac} Suppose that $k$ is a field with an ordering. Then
$\Br(U)/\Br_{1}(U) \subset \Bbb Z/2$.
If  $d$ is positive in that ordering, then
 $\Br_1(U)=\Br(U)$. \end{cor}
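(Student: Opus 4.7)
The plan is to apply Theorem \ref{br-gen} and analyze the index set
$$I=\{n\in \mathbb N: \mu_n \subset k \text { and } -1, \tfrac{\sqrt{d}-\sqrt{m}}{2} \in K^{\times n}\},$$
where $K=k(\sqrt d,\sqrt m)$. Since $k$ admits an ordering, $k$ is formally real, in particular of characteristic zero, and the only roots of unity in $k$ are $\pm 1$. Thus $\mu_n\subset k$ forces $n\mid 2$, so $I\subset\{1,2\}$. Since $I$ is a directed set under divisibility, Theorem \ref{br-gen} yields
$$\Br(U)/\Br_1(U)\cong \varinjlim_{n\in I}\mathbb Z/n\subset \mathbb Z/2,$$
which gives the first assertion.

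For the second assertion, assume further that $d>0$ in the given ordering. Then $m=d+4>0$ as well. I would then invoke the standard fact that if $a\in k^\times$ is positive in an ordering of $k$, then either $\sqrt a\in k$ or the ordering extends to $k(\sqrt a)$ (equivalently, $-1$ is not a sum of squares in $k(\sqrt a)$). Applying this first to $d$ and then to $m$ over $k(\sqrt d)$, the compositum $K=k(\sqrt d,\sqrt m)$ inherits an ordering, hence is formally real. In particular $-1\notin K^{\times 2}$, so $2\notin I$. Therefore $I=\{1\}$ and $\Br(U)/\Br_1(U)=0$, i.e.\ $\Br_1(U)=\Br(U)$.

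There is essentially no obstacle: once Theorem \ref{br-gen} is in hand, the argument reduces to the elementary observation that an ordered field contains no nontrivial roots of unity and that positive elements admit square roots in a real extension. The only point requiring any care is the two-step extension of the ordering from $k$ to $K$ in the case $d>0$, and this is a standard consequence of Artin--Schreier theory.
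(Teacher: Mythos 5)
Your proposal is correct and follows essentially the same route as the paper: both parts reduce via Theorem \ref{br-gen} to analysing the index set $I$, the first assertion coming from the observation that an ordered field contains only $\pm1$ as roots of unity. The only cosmetic difference is in showing $-1\notin K^{\times 2}$ when $d>0$: you extend the ordering in two steps to $K=k(\sqrt d,\sqrt m)$, whereas the paper observes directly that $K$ embeds in the real closure $R$ of $k$ because $d$ and $m=d+4$ are positive and hence squares in $R$ — the same Artin--Schreier input in a slightly different package.
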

\begin{proof}  Let $n \in I$. By (the easy part of the proof of) Theorem \ref{br-gen}, we have $\mu_{n} \subset k$ and $-1 \in K^{\times 2}$.
 If $k$ can be ordered, this implies $n \in \{1,2\}$.
If  $d$ is positive with respect to an ordering, then $d$ and $m=d+4$ are both positive in the real closure $R$ of $k$ with respect to this ordering. There
 is an embedding $K \subset R$. Thus $-1$ is not a square in $K$.
This implies $I=\{1\}$.
\end{proof}

\begin{cor}\label{br-Q} Let $k$ be a field of characteristic zero. If  $-1 \notin k^{\times 2}$ and $-d \notin k^{\times 2}$,
then the quotient $\Br(U)/\Br_1(U)$ has no $2$-primary part.
If moreover $k$ admits an ordering then $\Br_1(U)=\Br(U)$.
\end{cor}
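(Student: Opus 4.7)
The plan is to invoke Theorem~\ref{br-gen} and show that $2\notin I$; since $I$ is closed under divisors (if $m\mid n$ then $\mu_m\subset\mu_n$ and every $n$-th power in $K^\times$ is also an $m$-th power), this eliminates every $2^s\geq 2$ from $I$, so the $2$-primary part of $\Br(U)/\Br_1(U)\cong\varinjlim_{n\in I}\mathbb{Z}/n$ vanishes. The second assertion is then immediate from Corollary~\ref{br-prac}: an ordering on $k$ forces $-1\notin k^{\times 2}$, and Corollary~\ref{br-prac} already bounds $\Br(U)/\Br_1(U)$ inside $\mathbb{Z}/2$, which is exactly the $2$-primary part we will have shown to vanish.

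Suppose for contradiction that $2\in I$, so $-1\in K^{\times 2}$ and $(\sqrt{d}-\sqrt{m})/2\in K^{\times 2}$, where $K=k(\sqrt{d},\sqrt{m})$. Let $i\in K$ satisfy $i^2=-1$. The hypothesis $-1\notin k^{\times 2}$ makes $k(i)$ a proper quadratic subfield of $K$, so $[K:k]\in\{2,4\}$. If $K=k(i)$, then writing $\sqrt{d}=\alpha+\beta i$ with $\alpha,\beta\in k$ forces $\alpha\beta=0$ and $d=\alpha^2-\beta^2$; the option $\alpha=0$ is ruled out by $-d\notin k^{\times 2}$, so $d=\alpha^2$. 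The analogous discussion for $\sqrt{m}$ rules out $m\in k^{\times 2}$ (which would give $K=k$) and gives $-m=c^2$ with $\sqrt{m}=\pm ci$, so that $d-m=-4$ reads $\alpha^2+c^2=-4$. Setting $(\sqrt{d}-\sqrt{m})/2=(\alpha\mp ci)/2=(p+qi)^2$ with $p,q\in k$ and equating real and imaginary parts yields $(p^2+q^2)^2=(p^2-q^2)^2+(2pq)^2=(\alpha^2+c^2)/4=-1$, whence $-1\in k^{\times 2}$, a contradiction.

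If instead $[K:k]=4$, then $k(i)$ coincides with one of the three quadratic subfields $k(\sqrt{d}),k(\sqrt{m}),k(\sqrt{dm})$ of the biquadratic extension $K/k$; the hypothesis $-d\notin k^{\times 2}$ excludes $k(i)=k(\sqrt{d})$. In the subcase $-m=c^2$ with $c\in k$, set $F=k(i)$, $K=F(\sqrt{d})$, $\sqrt{m}=ci$, and write $(\sqrt{d}-ci)/2=(a+b\sqrt{d})^2$ with $a,b\in F$; eliminating $b$ via $2ab=1/2$ produces $16a^4+8cia^2+d=0$, and $m-d=4$ makes the solutions $a^2=(\pm 2-ci)/4$. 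Writing $a^2=(p+qi)^2$ with $p,q\in k$ then forces $(p^2+q^2)^2=1/4+c^2/16=-d/16$, so $-d\in k^{\times 2}$, a contradiction. In the remaining subcase $-dm=c^2$, the element $\sqrt{m}=ci/\sqrt{d}$ does not lie in $F$, so I use instead the necessary condition that $N_{K/F}((\sqrt{d}-\sqrt{m})/2)=-(d+m)/4+ci/2$ be a square in $F$; setting it equal to $(p+qi)^2$ with $p,q\in k$ and using $(d-m)^2=16$ gives $(p^2+q^2)^2=1$, so $p^2=-d/4$ or $p^2=-m/4$. The first contradicts the hypothesis; the second, together with $-dm\in k^{\times 2}$, makes the product $(-m)(-dm)=dm^2$ a square in $k$ and hence $d\in k^{\times 2}$, incompatible with $[K:k]=4$.

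The delicate step is the last subcase, where $\sqrt{m}\notin F$ makes the direct ``square of $a+b\sqrt{d}$'' expansion awkward and must be replaced by the norm condition $N_{K/F}(\eta^2)=N_{K/F}(\eta)^2\in F^{\times 2}$; otherwise the arithmetic is entirely driven by the elementary identity $d-m=-4$.
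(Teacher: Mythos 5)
Your argument is correct, but it chooses a less convenient intermediate field for the norm, and that choice is what produces the long case analysis. Both you and the paper start from the same consequence of Theorem~\ref{br-gen}: if $2\in I$ then $\sqrt{-1}\in K=k(\sqrt{d},\sqrt{m})$ and $(\sqrt{d}-\sqrt{m})/2\in K^{\times 2}$. You then descend to $k(i)$, which forces you to distinguish $[K:k]=2$ from $[K:k]=4$, and within the latter to split on which quadratic subfield of $K$ contains $i$; each branch then re-extracts the relation $(d-m)^2=16$ by hand from an explicit square decomposition. The paper instead first observes that the pair of hypotheses $-1\notin k^{\times 2}$ and $-d\notin k^{\times 2}$ is equivalent to the single condition $\sqrt{-1}\notin k(\sqrt{d})$; thus $\sqrt{-1}\in K$ already forces $K\supsetneq k(\sqrt{d})$, and then
$$N_{K/k(\sqrt{d})}\Bigl(\tfrac{\sqrt{d}-\sqrt{m}}{2}\Bigr)=\frac{(\sqrt{d}-\sqrt{m})(\sqrt{d}+\sqrt{m})}{4}=\frac{d-m}{4}=-1,$$
which must lie in $k(\sqrt{d})^{\times 2}$ since norms of squares are squares --- exactly what the reformulated hypothesis forbids. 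Taking the norm to $k(\sqrt{d})$ rather than $k(i)$ turns the identity $d-m=-4$ into the norm value itself, so a single line replaces your three subcases. The trade-off: your version is more explicit about how $i$ sits inside $K$, but that extra information is never used, whereas the paper's reformulation of the hypothesis packages both non-square conditions into one statement that the norm contradicts directly. Two minor points to tidy: the relation you quote should read $N_{K/F}(\xi^2)=N_{K/F}(\xi)^2$ applied to $\xi$ with $\xi^2=(\sqrt{d}-\sqrt{m})/2$ (not $N_{K/F}(\eta^2)$); and for the first assertion you do not need closure of $I$ under divisors --- since $\mathbb{Z}/2\subset\mathbb{Z}/2^s$ for all $s\ge 1$, any $2$-power torsion in $\Br(U)/\Br_1(U)$ already forces $2\in I$.
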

\begin{proof} The hypothesis is equivalent to $\sqrt{-1}\not \in k(\sqrt{d})$.
Suppose $2 \in I$. By (the easy part of the proof of) Theorem \ref{br-gen},
we then have $$\sqrt{-1} \in K^\times \ \ \ \text{and} \ \ \  \frac{\sqrt{d}-\sqrt{m}}{2} \in K^{\times 2}$$ with $K=k(\sqrt{m}, \sqrt{d})$.  Since $\sqrt{-1}\not \in k(\sqrt{d})$, one has $k(\sqrt{d})\neq K$ and $\sqrt{m} \not \in k(\sqrt{d})$. Therefore $$-1=N_{K/k (\sqrt{d})}(\frac{\sqrt{d}-\sqrt{m}}{2}) \in k(\sqrt{d})^{\times 2}$$ which contradicts $-d \notin k^{\times 2}$.
\end{proof}

\begin{rem}
In the case $k=\Bbb Q$,  we find that $\Br_{1}(U)=\Br(U)$ as soon as $-d  \notin \Bbb Q^{\times 2}$.
\end{rem}
\begin{rem}
Suppose $-1 \notin k^{\times 2}$.
There exist $\gamma,\delta  \in k^{\times}$ be such that $\gamma^2+\delta ^2=1$ and $\gamma \neq \pm \delta $.
Set $u=4\gamma\delta $ and $v=2(\delta ^2-\gamma^2)$. Then $u^2+v^2=4$. Let $d=-u^2$ and $m=4-u^2=v^2$.
Fix $i:=\sqrt{-1} \in \bar k$.
Then $K=k(\sqrt{d},\sqrt{m})=k(i)$ is of degree 2 over $k$, contains $\sqrt{-1}$ and we have:
$$(\sqrt{d}-\sqrt{m})/2= (ui-v)/2 = \gamma^2-\delta ^2+2 \gamma \delta i= (\gamma+\delta  i)^2 \in K^{\times 2}.$$
For $U=U_{m}$, the hard part of the proof of Theorem \ref{br-gen} then gives $\Bbb Z/2 \subset \Br(U)/\Br_{1}(U)$.
If $k=\Bbb Q$,  
it then gives  $\Br(U)/\Br_{1}(U)=\Bbb Z/2$.
\end{rem}

\begin{rem} Suppose $m \in k^{\times 2}$
 and $d \notin k^{\times 2}$, so that
 $K=k(\sqrt{d})\neq k $.   Suppose $n \in I$ is a power of $2$. If $n=2$, assume $\mu_{4} \subset k$.
Then we can write down an explicit element   in $\Br(U)$ whose image generates
the cyclic subgroup of order $n$ of $\Br(U)/\Br_{1}(U)$.

Indeed, by assumption we have  $\mu_n \subset  k$ and $-1,\alpha\in K^{\times n}$ where $\alpha=(\sqrt{d}-\sqrt{m})/2$.
Let $$\chi_1 \in H^1(\Gal(k(\mu_{4n})/k),\mathbb Q/\mathbb Z) \ \ \ \text{and} \ \ \ \chi_2\in H^1(\Gal(k(\sqrt{d}, \sqrt[2n]{\alpha})/k),\mathbb Q/\mathbb Z)$$ be such that the restrictions of $\chi_1$ and $\chi_2$ to
$$\Gal(K(\mu_{4n})/K) \ \ \ \text{and} \ \ \  \Gal(k(\sqrt{d}, \sqrt[2n]{\alpha})/k(\sqrt{d}))$$ are  respective generators of these groups. Then the element
$$\mathcal B=Cor_{K/k}(\frac{f}{g}, \frac{u}{v})_{\zeta_{2n}}+((x-2)(y-\sqrt{m})(z-2), \chi_1) +((x-\sqrt{m})(y-2)(z-\sqrt{m}),\chi_2)$$
 is in $\Br(U)[2n]$, where $\zeta_{2n}$ is a primitive $2n$-th root of unity. Under the assumption  $\mu_4 \subset k$ if $n=2$,
 the image of $\mathcal B$ is of order $n$ in $\Br({\overline U})$.
\end{rem}

\section{Failure of the integral Hasse principle}

In this section, we explain that all examples which do not satisfy the Hasse principle in \cite{GS}
can be accounted for by integral Brauer-Manin obstruction or by the combination of integral Brauer-Manin obstruction with the reduction theory.

Given a scheme $\mathcal{U}$ over $\Bbb Z$, and $U:=\mathcal{U}\times_{\Bbb Z}\Bbb Q$,
we let $\mathcal{U}(A_{\Bbb Z})
= \prod_{p}
\mathcal{U}(\Bbb Z_{p}),$ where $p$ runs through all primes
and $\infty$, and $\Bbb Z_{\infty}={\Bbb R}$. We let
$${\mathcal{U}(A_{\Bbb Z}  )}_{\bullet} =
 \prod_{p < \infty}
  \mathcal{U}(\Bbb Z_{p})
 \times \pi_{0}(U(\Bbb R))$$
where $ \pi_{0}(U(\Bbb R))$ is the set of connected components of $U(\Bbb R)$.
We have the Brauer-Manin pairing
$${\mathcal{U}(A_{\Bbb Z}  )}_{\bullet} \times \Br(U) \to {\Bbb Q/\Bbb Z}.$$
The (reduced) Brauer-Manin set is the left kernel of this pairing.
Note that the Legendre symbol takes values in $\pm 1$ but the Hilbert symbols used below
take values $0$ or $1/2$ in $\Bbb Q/\Bbb Z$.

\subsection{Integral Brauer-Manin obstructions}

Let  $m \neq 0, 4$ be an integer and $d=m-4$.
 Let $ {\mathcal U}_{m}$ be the scheme over $\Bbb Z$ defined  by  equation (\ref{markoff1})
and $U_{m} ={\mathcal U}_{m} \times_{\Bbb Z} \Bbb Q$.

\begin{lem} \label{gen}    If $p$ is an odd prime with $(p, d)=1$, then each element in the following set
$$\{ (x\pm 2, d), \ (y\pm 2, d),   \ (z\pm 2, d)   \} \subset \Br(U_{m})$$   vanishes over ${\mathcal U}_{m}(\Bbb Z_p)$ and $(x^2-4,d)=(y^2-4,d)=(z^2-4,d)$ vanishes over $U_{m}(\Bbb Q_p)$.
If $d>0$, these elements vanish over $U_{m}(\Bbb R)$.
\end{lem}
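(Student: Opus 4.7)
The plan is to reduce every case to a direct Hilbert-symbol computation and to exploit the alternative form (\ref{markoff2}) of the defining equation. I first dispose of the easy situations: over $\Bbb R$ the hypothesis $d>0$ makes $d$ a square, so $(a,d)_{\infty}=0$ for every $a\in\Bbb R^{\times}$; similarly, if $d$ happens to be a square in $\Bbb Q_{p}^{\times}$ (equivalently, a square mod $p$, since $p$ is odd and $p\nmid d$) then every symbol $(a,d)_{p}$ vanishes. Thus I may assume for the remainder of the $p$-adic argument that $d$ is a non-square unit of $\Bbb Q_{p}$, in which case $(a,d)_{p}=0$ if and only if $v_{p}(a)$ is even.

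For the integral part of the lemma, let $(x_{0},y_{0},z_{0})\in{\mathcal U}_{m}(\Bbb Z_{p})$. Since $p$ is odd, at most one of $v_{p}(x_{0}-2)$ and $v_{p}(x_{0}+2)$ can be positive, so it suffices to show $v_{p}(x_{0}-2)=0$, the case of $x_{0}+2$ and the analogues for $y_{0}$ and $z_{0}$ being identical. I would argue by contradiction: if $x_{0}\equiv 2\pmod{p}$, the left side of (\ref{markoff2}) has positive $p$-adic valuation, so the right side $(2z_{0}-x_{0}y_{0})^{2}-4d$ must reduce to $0$ mod $p$; but $2z_{0}-x_{0}y_{0}\equiv 2(z_{0}-y_{0})\pmod{p}$, yielding $(z_{0}-y_{0})^{2}\equiv d\pmod{p}$, which contradicts non-squareness. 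Hence $v_{p}(x_{0}\pm 2)=0$, and all six Hilbert symbols at $(x_{0},y_{0},z_{0})$ are trivial.

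For the rational part, I would first observe that the equality $(x^{2}-4,d)=(y^{2}-4,d)=(z^{2}-4,d)$ already holds in $\Br(U_{m})$: the right side of (\ref{markoff2}) is the norm of $(2z-xy)+2\sqrt{d}$ from $\Bbb Q(\sqrt{d})$, so $((x^{2}-4)(y^{2}-4),d)=0$ in the function field, and cyclically the same holds for $(y^{2}-4)(z^{2}-4)$. To evaluate this common class at $(x_{0},y_{0},z_{0})\in U_{m}(\Bbb Q_{p})$ (still assuming $d$ is a non-square unit), two cases arise. If all three coordinates lie in $\Bbb Z_{p}$, I invoke the integral case already proved. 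Otherwise some coordinate, say $x_{0}$, has $v_{p}(x_{0})<0$; then $v_{p}(x_{0}^{2}-4)=2v_{p}(x_{0})$ is even, forcing $(x_{0}^{2}-4,d)_{p}=0$, and the equality of classes in $\Br(U_{m})$ transports this vanishing to $y_{0}$ and $z_{0}$.

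The only step requiring a real idea is the mod-$p$ reduction of (\ref{markoff2}) in the second paragraph, which converts the assumption $x_{0}\equiv 2\pmod{p}$ into the forbidden statement that $d$ is a square mod $p$; everything else is bookkeeping with Hilbert symbols and the norm factorisation of $(2z-xy)^{2}-4d$.
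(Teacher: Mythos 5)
Your proof is correct and follows essentially the same route as the paper's: reduce to the case where $d$ is a non-square unit mod $p$, use equation (\ref{markoff2}) to see that $(x_p^2-4)(y_p^2-4) = (2z_p - x_p y_p)^2 - 4d$ forces $\mathrm{ord}_p(x_p^2-4)=\mathrm{ord}_p(y_p^2-4)=0$ for $\Bbb Z_p$-points (else $d$ would be a square mod $p$), extend to $z$ by symmetry, and for non-integral $\Bbb Q_p$-points note that a coordinate of negative valuation makes $\mathrm{ord}_p(x_p^2-4)$ even. One small slip: you call $(2z-xy)^2-4d$ the ``right side'' and $(x^2-4)(y^2-4)$ the ``left side'' of (\ref{markoff2}), which is swapped relative to how the equation is displayed in the paper, but the mathematics is unaffected.
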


\begin{proof}  One only needs to consider the case that $(\frac{d}{p})=-1$. Since (\ref{markoff1}) is equivalent to (\ref{markoff2}),
  over $\Bbb Z$,
one concludes that
$$ord_p(x_p^2-4)=ord_p(y_p^2-4)=0$$ for all $M_p=(x_p,y_p,z_p)\in \mathcal U_{m}(\Bbb Z_p)$. By symmetry, one  further obtains $$ord_p(x_p^2-4)=ord_p(y_p^2-4) =ord_p(z_p^2-4)=0$$ for all $M_p=(x_p,y_p,z_p)\in \mathcal U_{m}(\Bbb Z_p)$. This implies that the elements $(x\pm 2, d), (y\pm 2, d),  (z\pm 2, d)$ vanish over $\mathcal U_{m}(\Bbb Z_p)$.

If $(x_p,y_p,z_p)\in U_{m}(\Bbb Q_p)\setminus \mathcal U_{m}(\Bbb Z_p)$, one of $x_p, y_p, z_p\in \Bbb Q_p\setminus \Bbb Z_p$. Without loss of generality, we assume that $x_p\in \Bbb Q_p\setminus \Bbb Z_p$. Then $ord_p(x_p^2-4)$ is even and $(x_p^2-4,d)_p=0$. The result follows.
\end{proof}

\begin{lem} \label{infty} If $m<0$, then $|x|>2, |y|>2, |z|>2$ for any $(x,y,z)\in \mathcal U_m(\mathbb R)$.
\end{lem}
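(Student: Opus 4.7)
The plan is to exploit the alternative form \eqref{markoff2} of the defining equation, namely
\[
(2z-xy)^2 - 4d = (x^2-4)(y^2-4),
\]
together with its two permutations obtained by cycling the roles of $x,y,z$. Rearranging gives
\[
(x^2-4)(y^2-4) \;=\; (2z-xy)^2 + 4(4-m).
\]
Since $m<0$, the constant $4(4-m)$ is strictly greater than $16$, so for any real point $(x,y,z)\in \mathcal U_m(\mathbb R)$ we obtain the strict lower bound
\[
(x^2-4)(y^2-4) \;>\; 16.
\]

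Now I would argue that $x^2-4$ and $y^2-4$ cannot both be negative. Indeed, if both were negative, then $|x|<2$ and $|y|<2$, so
\[
(x^2-4)(y^2-4) \;=\; (4-x^2)(4-y^2) \;<\; 4\cdot 4 \;=\; 16,
\]
contradicting the strict lower bound above. Hence both factors are positive (they cannot vanish either, given the strict bound), which yields $|x|>2$ and $|y|>2$. Applying the same reasoning to the permuted equation
\[
(2x-yz)^2 - 4d = (y^2-4)(z^2-4)
\]
gives $|z|>2$ as well. The argument is essentially a one-line positivity check — there is no genuine obstacle, only the observation that the negativity of $m$ forces the constant term on the right-hand side of \eqref{markoff2} to exceed the trivial bound on $(4-x^2)(4-y^2)$ in the interior region.
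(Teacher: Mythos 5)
Your proof is correct. The paper's argument is different in detail: it fixes a coordinate, say $x$, assumes $|x|\leq 2$, and completes the square in $y$ to write
\[
m=\Bigl(y-\tfrac{xz}{2}\Bigr)^2+\Bigl(1-\tfrac{x^2}{4}\Bigr)z^2+x^2,
\]
which is a sum of three nonnegative terms, giving $m\geq 0$ directly and contradicting $m<0$. You instead rearrange \eqref{markoff2} as $(x^2-4)(y^2-4)=(2z-xy)^2+4(4-m)>16$, observe that the product being $>16>0$ forces the factors to share a sign, rule out the ``both negative'' case by the elementary bound $(4-x^2)(4-y^2)<16$, and conclude both factors are positive; a cyclic permutation then gives the third inequality. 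Both arguments are short positivity checks using equivalent algebraic normal forms of the defining equation, and both are perfectly valid. The paper's version is marginally leaner, since it needs only one inequality per coordinate and no case split on signs; yours has the small advantage of yielding two of the three bounds from a single application of \eqref{markoff2}. One stylistic nitpick on your write-up: you should make explicit that positivity of the product forces $x^2-4$ and $y^2-4$ to have the same sign before discarding the ``both negative'' case, so that ``hence both factors are positive'' does not silently skip the opposite-sign possibility (which is of course also excluded by positivity of the product, but this is worth a clause).
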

\begin{proof} Let $(x,y,z)\in \mathcal U_m(\mathbb R)$.   
 Suppose $|x|\leq 2$. Then $$m=(y-xz/2)^2+(1-x^2/4)z^2 + x^2 \geq 0 $$ which contradicts $m<0$. So $|x|>2$. Similarly $|y|>2,|z|>2$.
\end{proof}

\begin{rem} Let $f: U_m \rightarrow \mathbb A^2$ be the morphism defined by projecting $(x,y,z)$ to $(x,y)$. Therefore the image of $U_m(\mathbb R)$ by $f$ is the subset   $$W:=\{(x,y)\in \mathbb R^2: (x^2-4)(y^2-4)+4(m-4)\geq 0\} \subset \mathbb R^2.$$
The connected components of $U_m(\mathbb R)$ are just the preimages of connected components of  $W$ by $f$. The four lines $x=\pm 2$ and $y=\pm 2$ divide the plane $\mathbb R^2$ into nine parts.
Considering the signature of $(x^2-4)(y^2-4)$ on the nine parts, we have
$$\#\pi_{0}(U_m(\mathbb R))=\#\pi_{0}(W)=\begin{cases}
1 \ \ \ & \text{ if } m\geq 4\\
5 \ \ \ & \text{ if } 0\leq m <4\\
4 \ \ \ & \text{ if } m<0. 
\end{cases}$$
All connected components of $U_m(\mathbb R)$ are unbounded except the connected component defined by $|x|,|y|<2$ when $0\leq m <4$, and the bounded connected component becomes a single  point $(0,0,0)$ when $m=0$.
If $m<4$, $\Gamma$ permutes the
  four  unbounded components  transitively. 
  Full details are given in section \ref{real}.
\end{rem}

Let 
$ \mathcal B_1=(x-2,d)$, $ \mathcal B_2=(y-2,d)$, $  \mathcal B_3=(z-2,d) $ in $ \Br_{1}(U_{m}).$
By Theorem \ref{br}, for $m$ not a square, these  three elements  generate $\Br_1(U_m)/\Br_0(U_m)$.
 Let $B=(\mathcal B_1, \mathcal B_2,  \mathcal B_3) $. One can define the evaluation of $B$ over  $\mathcal U_m(\mathbb Z_p)$ by
 $$ B(M_p)= (\mathcal B_1(M_p), \mathcal B_2(M_p), \mathcal B_3(M_p) ) \in (\Bbb Q/\Bbb Z)^3$$ for $M_p\in \mathcal U_m(\Bbb Z_p)$ and
 $$B(\mathcal U_m(\Bbb Z_p))= \{ B(M_p): \ M_p \in \mathcal U_m(\Bbb Z_p) \}\subset (\Bbb Q/\Bbb Z)^3 $$ for $p\leq \infty$. By the symmetry of the  
 coordinates of (\ref{markoff1}),
the symmetric group $S_3$ acts on $B(\mathcal U_m(\Bbb Z_p))$ by coordinate permutation.

\begin{lem}\label{value-2} If $m\equiv 1 \mod 8$, then  $$B(\mathcal U_m(\Bbb Z_2))=  \{(1/2,1/2,0),(1/2,0,1/2),(0,1/2,1/2)\}.$$
\end{lem}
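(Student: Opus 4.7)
The plan is to directly compute each Hilbert symbol $(x_2-2,d)_2$, $(y_2-2,d)_2$, $(z_2-2,d)_2$ as an element of $\Bbb Q/\Bbb Z$ for every $(x_2,y_2,z_2)\in \mathcal U_m(\Bbb Z_2)$. Since $m\equiv 1 \pmod 8$, we have $d=m-4\equiv 5 \pmod 8$, in particular $d\in \Bbb Z_2^{\times}$ but $d\notin (\Bbb Z_2^{\times})^2$. By the standard formula for the Hilbert symbol over $\Bbb Q_2$ (Serre, \emph{Cours d'arithm\'etique}), writing $a=2^{\alpha}u$ with $u\in \Bbb Z_2^{\times}$, one finds $(a,d)_2 = \alpha/2 \pmod 1$. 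So the symbol vanishes exactly when $v_2(a)$ is even and equals $1/2$ when $v_2(a)$ is odd.

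The task thus reduces to determining the $2$-adic valuations of $x_2-2$, $y_2-2$, $z_2-2$ for points in $\mathcal U_m(\Bbb Z_2)$. First, reducing equation \eqref{markoff1} modulo $2$ and using $m\equiv 1\pmod 2$, a short case analysis on the parities of $(x_2,y_2,z_2)$ shows that \emph{exactly one} of the three coordinates is odd. By symmetry of the equation assume $x_2$ is odd and $y_2,z_2$ are even; write $y_2=2y'$, $z_2=2z'$. Since $x_2$ odd gives $x_2^2\equiv 1 \pmod 8$, equation \eqref{markoff1} becomes
\[
1 + 4(y'^{\,2}+z'^{\,2}-x_2 y' z') \equiv m \equiv 1 \pmod 8,
\]
i.e.\ $y'^{\,2}+z'^{\,2}-x_2 y' z'\equiv 0 \pmod 2$, which (since $x_2$ is odd) simplifies to $y'+z'+y'z'\equiv 0\pmod 2$. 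The only way this holds is $y'\equiv z'\equiv 0\pmod 2$, that is $y_2,z_2\in 4\Bbb Z_2$.

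Putting the two steps together: if $x_2$ is the odd coordinate, then $v_2(x_2-2)=0$ while $y_2-2\equiv -2\pmod 4$ and $z_2-2\equiv -2\pmod 4$ have $v_2=1$, so
\[
B(x_2,y_2,z_2)=\bigl((x_2-2,d)_2,(y_2-2,d)_2,(z_2-2,d)_2\bigr)=(0,1/2,1/2).
\]
The analogous cases where $y_2$ or $z_2$ is the unique odd coordinate give $(1/2,0,1/2)$ and $(1/2,1/2,0)$ respectively, so $B(\mathcal U_m(\Bbb Z_2))$ is contained in the asserted three-element set. Finally, since $m\equiv 1\pmod 8$, $m$ is a square in $\Bbb Z_2$, so $(\pm\sqrt m,0,0)\in \mathcal U_m(\Bbb Z_2)$ realises the configuration with $x_2$ odd, and its two coordinate permutations realise the other two configurations; hence all three values are attained.

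The only step requiring genuine calculation is the mod-$8$ refinement forcing $y_2,z_2\in 4\Bbb Z_2$; everything else is either bookkeeping with the explicit Hilbert symbol formula at $p=2$ or an application of symmetry and Hensel's lemma. No real obstacle is anticipated.
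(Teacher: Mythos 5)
Your proposal is correct and follows essentially the same approach as the paper: reduce modulo $2$ and modulo $8$ to see that exactly one coordinate is a unit and the other two lie in $4\Bbb Z_2$, then evaluate the Hilbert symbols $(\cdot,d)_2$ for $d\equiv 5\pmod 8$ and use the $S_3$-symmetry. The paper leaves the Hilbert-symbol evaluation and the attainment of all three values as a ``straightforward computation,'' whereas you spell them out explicitly (including the nice observation that $(\pm\sqrt m,0,0)$ realizes one configuration and its permutations the others); this is more detail, not a different route.
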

\begin{proof} Since $m\equiv 1 \mod 8$, one obtains that $d\equiv 5 \mod 8$ and 
 by (\ref{markoff1})
there is one and only one coordinate of any point in $\mathcal U_m(\mathbb Z_2)$ belonging to $\Bbb Z_2^\times$.

 The remaining two coordinates belong to $4\Bbb Z_2$ 
  by (\ref{markoff1}).
 The result follows from the straightforward computation of the Hilbert symbols and the symmetry of the coordinates.
\end{proof}

\begin{lem} \label{value-35} If $p=3$ or $p=5$ and
 $\ord_p(d)$ is odd, then
$$ B(\mathcal U_m(\mathbb Z_p)) = \begin{cases}   \{(1/2,0,0),(0,1/2,0),(0,0,1/2)\} \ \ \ & \text{for $p=3$ and $\ord_3(d)=1$} \\
 (\frac{1}{2}\Bbb Z/\Bbb Z)^3 \ \ \ & \text{for $p=3$ and $\ord_3(d)\geq 3$} \\
 (\frac{1}{2}\Bbb Z/\Bbb Z)^3\setminus (0,0,0) \ \ \ & \text{for $p=5$ and $\ord_5(d)=1$} \\
  (\frac{1}{2}\Bbb Z/\Bbb Z)^3 \ \ \ & \text{for $p=5$ and $\ord_5(d)\geq 3$.}
\end{cases} $$
\end{lem}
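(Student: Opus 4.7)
The plan is to combine (i) algebraic relations among the Hilbert symbols $(x\pm 2,d)_p$, $(y\pm 2,d)_p$, $(z\pm 2,d)_p$ derived from the factorizations (\ref{markoff2}) and (\ref{markoff3}) together with the sign-change symmetries of the Markoff equation, with (ii) a residue analysis of $\mathcal U_m$ modulo an appropriate power of $p$. Since $\ord_p(d)$ is odd, $\Bbb Q_p(\sqrt{d})/\Bbb Q_p$ is a ramified quadratic extension, so $(a,d)_p$ is computed by the tame symbol from $\ord_p(a)$ and the residue modulo $p$ of the unit part of $a$.

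For step (i), applying the three nontrivial sign changes $(x,y,z)\mapsto(\pm x,\pm y,\pm z)$ preserving the Markoff equation to (\ref{markoff3}) yields four factorizations, each exhibiting one of the products $(x\pm 2)(y\pm 2)(z\pm 2)$ with an even number of $+2$'s as a norm $A^2-d$ from $\Bbb Q_p(\sqrt{d})$. Combined with the consequence $(x^2-4,d)_p=(y^2-4,d)_p=(z^2-4,d)_p=:c_p$ of (\ref{markoff2}) and its permutations, this gives the parity constraint
$$\mathcal B_1(M_p)+\mathcal B_2(M_p)+\mathcal B_3(M_p)=c_p\in\Bbb Q/\Bbb Z$$
for every $M_p\in \mathcal U_m(\Bbb Z_p)$, cutting the \emph{a priori} list of $8$ candidate triples in $(\tfrac12\Bbb Z/\Bbb Z)^3$ down to $4$ of each parity.

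For step (ii), enumerate the residue classes of $(x,y,z)$ modulo $p$ satisfying $x^2+y^2+z^2-xyz\equiv 4\pmod p$; in each class substitute $x=a+px'$, etc., and use the Markoff equation to read off $\ord_p(x\pm 2)$ and the unit parts, from which the tame symbol formula gives $\mathcal B_i$ explicitly. When $\ord_p(d)=1$ the analysis is rigid. For $p=3$, writing $d=3u$ with $u\in\Bbb Z_3^\times$, a mod-$9$ expansion shows that both subcases $(x,y,z)\equiv (2,2,2)\pmod 3$ and $(x,y,z)\equiv (2,1,1)\pmod 3$ (up to permutation) of the ``all nonzero modulo $3$'' branch force $u\equiv 0\pmod 3$, contradicting $u\in\Bbb Z_3^\times$; the only surviving branch has exactly two coordinates in $3\Bbb Z_3$, and a direct computation shows it always produces the triple $(1/2,0,0)$ up to permutation, so the image is $\{(1/2,0,0),(0,1/2,0),(0,0,1/2)\}$. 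A parallel rigidity argument at $p=5,\ \ord_5(d)=1$ excludes precisely the triple $(0,0,0)$.

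The main obstacle is the case $\ord_p(d)\geq 3$, where one must verify that all eight triples in $(\tfrac12\Bbb Z/\Bbb Z)^3$ are realized. Here the Markoff equation modulo $p^3$ is much more permissive: one can freely choose base residues for $(x,y,z)$ modulo $p$ and then adjust the next-order terms to independently prescribe both the valuations of $x\pm 2, y\pm 2, z\pm 2$ and their unit parts modulo $p$. By varying these data one realizes every parity-compatible triple, and since $c_p$ itself takes both values as the base residues vary, all eight triples are achieved. Honest $\Bbb Z_p$-points lifting each mod-$p^k$ solution are then produced by Hensel's lemma applied at a smooth point of the reduction.
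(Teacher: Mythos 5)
Your overall strategy — combine the $2$-torsion relation in $\Br(U)$ forced by (\ref{relation-br}) (and its sign-changed variants) with a residue-class analysis of $\mathcal U_m(\Bbb Z_p)$ and Hensel lifting — is the same strategy the paper uses, and the $p=3$, $\ord_3(d)=1$ branch you sketch reproduces the paper's argument essentially verbatim: two coordinates forced into $3\Bbb Z_3$, one unit, and then the tame symbol plus (\ref{relation-br}) give $(0,0,1/2)$ up to permutation. (Small bookkeeping: the norm-form identities coming from (\ref{markoff3}) and the allowed sign changes $(x,y,z)\mapsto(\pm x,\pm y,\pm z)$ with an \emph{even} number of minus signs produce products $(x\pm2)(y\pm2)(z\pm2)$ with an \emph{odd} number of $+2$'s, not even; this doesn't affect the resulting linear constraint, which is exactly (\ref{relation-br}).)

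The gap is in the $\ord_p(d)\geq 3$ branches, which is where the real work of the lemma lives. You assert that for $\ord_p(d)\geq 3$ the congruence $x^2+y^2+z^2-xyz\equiv m\bmod p^3$ is ``permissive'' enough that ``one can freely choose base residues for $(x,y,z)$ modulo $p$ and then adjust the next-order terms to independently prescribe both the valuations of $x\pm2,y\pm2,z\pm2$ and their unit parts modulo $p$.'' That is not self-evident: the Markoff equation is a single quadratic constraint, and the valuations and unit parts of the six quantities $x\pm2$, $y\pm2$, $z\pm2$ are far from independently prescribable once that constraint is imposed; moreover the target set $(\tfrac12\Bbb Z/\Bbb Z)^3$ consists of $8$ classes but you only have a $2$-parameter family of solutions to play with, so surjectivity onto all $8$ cells must be checked, not deduced from a dimension count. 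In the paper this step is done by exhibiting, for each needed value of $B$, an explicit $\Bbb Z_p$-point produced by Hensel's lemma after a careful choice of first- and second-order residues (e.g.\ for $p=3$: $(0,0,2+3^{2n+1}\xi)$, $(2+3\xi,2+3a,2+3a)$ with $a$ in one of two classes mod $3$, and $(-2+3d_0,-2+3d_0,2+3\xi)$), and each choice requires verifying a specific Hilbert-symbol identity such as $(3\xi,d)_3=(-1,3d_0)_3$. None of that is captured by your sketch, and without it the claim that every parity-compatible triple is realized is unproved. Likewise, the $p=5$, $\ord_5(d)=1$ case is dismissed as ``a parallel rigidity argument''; in the paper this requires separate case analyses according to whether a coordinate is $\equiv\pm2\bmod5$ and, via Proposition \ref{x} and (\ref{markoff3}), pins down the impossible value $(0,0,0)$ — this is not the same shape of argument as the $p=3$ case and should not be elided. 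In short: correct plan, correct easy case, but the lemma's actual content — the explicit realizability computations for $\ord_p(d)\geq3$ and the exclusion of $(0,0,0)$ at $p=5$ — is asserted rather than proved.
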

\begin{proof} 
$\bullet$   
Assume $p=3$ and $ord_3(d)=1$. Since
  (\ref{markoff1})
 is equivalent to
equation (\ref{markoff2})
and its variants by  coordinate permutations, any point in $\mathcal U(\mathbb Z_3)$ must have two coordinates in $3\Bbb Z_3$ and  the remaining coordinate  in $\Bbb Z_3^\times$  
by (\ref{markoff2}).
Without loss of generality, we assume $x, y\in 3\Bbb Z_3$ and $z\in \Bbb Z_3^\times$.
Therefore $$(x-2,d)_3=(y-2,d)_3=0 \text{ and } (x+2,d)_3=1/2.$$
By (\ref{relation-br}), one has $(z-2, d)_3=1/2$, hence $B((x,y,z))=(0,0,1/2)$.
The result follows by permutation of the coordinates.

$\bullet$
Assume $p=3$ and $ord_3(d)\geq 3$. Let $d=3^{2n+1}d_0$ with $d_0\in \Bbb Z_3^\times$ and $n\geq 1$.

By Hensel's lemma, there is $\xi\in \Bbb Z_3^\times$ such that $$4\xi+ 3^{2n+1} \xi^2=d_0 .$$ This implies:
$$(3^{2n+1}\xi, d)_3=(3\xi, d)_3=(3d_0,d)_3=(3d_0,3d_0)_3=(-1, 3d_0)_3=1/2.$$  
Then for $M_3= (0,0, 2+3^{2n+1} \xi)\in \mathcal U_m(\Bbb Z_3)$
we have $B(M_3)= (0, 0, 1/2)$.

By Hensel's lemma, for any $a\in \Bbb Z_3^\times$, there is $\xi\in \Bbb Z_3^\times$ such that $$\xi^2-(4a+3a^2)  \xi =3^{2n-1} d_0.$$
This implies:  $$\xi\in a(\Bbb Z_3^\times)^2 \ \ \  \text{and} \ \ \ \ (3\xi, d)_3=(3a, d)_3=(-ad_0, 3d_0)_3 .$$
Take $$M_3=(2+3\xi, 2+3a, 2+3a)\in \mathcal U_m(\Bbb Z_3) . $$ Then
$$ B(M_3)= \begin{cases} (0, 0, 0) \ \ \ & \text{if $ad_0\in 2 + 3\Bbb Z_3$} \\
(1/2, 1/2,1/2) \ \ \ & \text{if $ad_0 \in 1+ 3\Bbb Z_3$ .} \end{cases} $$

Since there is $\xi\in \Bbb Z_3^\times$ such that
 $$ \xi^2+d_0(4-3d_0)\xi=3^{2n-1} d_0$$ by Hensel's lemma, one obtains: $$-\xi\in d_0(\Bbb Z_3^\times)^2 \ \ \ \text{and} \ \ \  (3\xi, d)_3=(-3d_0, 3d_0)_3=0 . $$
Then
 $$ M_3=(-2+3d_0,-2+3d_0, 2+3\xi)\in \mathcal U_m(\mathbb Z_3) \ \ \ \text{and} \ \ \ B(M_3)=(1/2, 1/2, 0) . $$
The result follows by permutation of the coordinates.

$\bullet$ 
Assume  $p=5$ and $\ord_5(d)=1$. One can use the lifting of smooth points of  $\mathcal U_m(\mathbb Z/5)$ as in \cite[Proposition 5.7]{LM}
to show that $B$ can take all possible values over $\mathcal U_m(\Bbb Z_5)$ except $(0,0,0)$. We prove  $(0,0,0)\not\in B(\mathcal U_m(\Bbb Z_5))$.

By (\ref{markoff2}), there is
 at most one coordinate of  a point in $\mathcal U_m(\Bbb Z_5)$ which is congruent to $3 \mod 5$.
 If that is the case, the sum of  the two remaining coordinates is congruent  to $0 \mod 5$  as one sees by reducing (\ref{markoff1}) over $\Bbb Z/5$.
 By inspecting cases, one sees that
  $B$ cannot take  the value $(0,0,0)$ over such points.

By (\ref{markoff2}), there is at most one coordinate of a point in $\mathcal U_m(\Bbb Z_5)$ which  is congruent to $2 \mod 5$.  If that is the case,  both remaining coordinates are congruent to  $1$ or $4  \mod 5$ simultaneously as one sees by reducing 
(\ref{markoff1})
over $\Bbb Z/5$. One only needs to show that  $B$ cannot take the value $(0,0,0)$ when both remaining coordinates are congruent to  $1 \mod 5$. Without loss of generality, we assume that $(x_5,y_5,z_5)\in \mathcal U_m(\Bbb Z_5)$ satisfies $x_5\equiv y_5\equiv 1 \mod 5$ and $z_5\equiv 2 \mod 5$. Since $(x_5-2, d)_5=(y_5-2, d)_5=0 $, one obtains that $(z_5+2, d)_5=0$ by (\ref{markoff3}). By Proposition \ref{x}, one has
$$(x_5^2-4, d)_5=(y_5^2-4, d)_5=(z_5^2-4, d)_5=1/2 .$$ This implies   $(z_5-2, d)_5=1/2$.

The only remaining  possibility which one needs to consider is that all coordinates of the points in $\mathcal U_m(\Bbb Z_5)$ are congruent to $1 \mod 5$. This is impossible as one sees by reducing (\ref{markoff1}) over $\Bbb Z/5$.

$\bullet$  
Assume $p=5$ and $ord_5(d)\geq 3$. One only needs to show $(0,0,0)\in B(\mathcal U_m(\Bbb Z_5))$. Let $d=5^{2n+1}d_0$ with $(d_0, 5)=1$ and $n\geq 1$.  There is $\xi\in \Bbb Z_5^\times$ such that $$ \xi^2+d_0(4-5d_0)\xi = 5^{2n-1} d_0 $$ by Hensel's lemma. This implies that $\xi\equiv -d_0 \mod 5$ and $(5\xi, d)_5=(-5d_0, 5d_0)_5=0$.
Then  $$M_5=(2+5\xi,-2+5d_0,-2+5d_0)\in \mathcal U_m(\mathbb Z_5) \ \ \ \text{and} \ \ \ B(M_5)=(0, 0, 0) $$ as required.
\end{proof}

\medskip

The following Proposition
 extends  \cite[Prop.8.1(i) and Prop. 8.2]{GS}, propositions which only involve elements in $\Br(X)$.

\begin{prop}  \label{8.1-8.2} Let $\mathcal U$ be the scheme over $\Bbb Z$ given by
\begin{equation} \label{p-eq}  x^2+y^2+z^2 -xyz=4+r v^2 \end{equation}
where $r \in \Bbb Z$ is one of $2, -2, -3, 12, -12$ and all prime factors of $v$ are congruent to
$$  \begin{cases}    \pm 1  \mod 8 \  \  & \text{when} \ r=2 \\
  \pm 1  \mod 12 \   \text{ and } \ v^2  \equiv 25  \mod 32 \ & \text{when} \  r=12   \\
  1 \  \  \text{or} \  \  3 \mod 8 \ \ & \text{when} \ r=-2 \\
 1 \mod 3  & \text{when} \ r=-3 \\
 1 \mod 3  & \text{when} \ r=-12
  \end{cases}
$$
and $v\neq \pm 1$ when $r=-2,-3$.
Let
$$B=(x^2-4,  r)=(y^2-4,  r)=(z^2-4,  r)\in \Br_1(U) $$ with $U=\mathcal U\times_\Bbb Z \Bbb Q$.
Then
$$ \mathcal U(A_\mathbb Z)^{B} =\emptyset . $$
\end{prop}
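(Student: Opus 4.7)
Since $B$ has order at most $2$ in $\Br(U_m)$, it suffices to show that the global invariant $\sum_p \mathrm{inv}_p B(M_p) \in \Bbb Q/\Bbb Z$ equals $1/2$ for every $(M_p) \in \mathcal U(A_\Bbb Z)$. Because $d = rv^2$ we have $B = (x^2-4, r) = (x^2-4, d)$ in $\Br(U)$, and \arf{gen} then makes $B$ vanish at $\mathcal U(\Bbb Z_p)$ for every odd prime $p$ with $p \nmid rv$. Only the primes in $\{\infty, 2\} \cup \{p \text{ odd}: p \mid r\} \cup \{p \text{ odd}: p \mid v\}$ can contribute.

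The odd primes dividing $v$ (but not $r$) contribute $0$ by the very design of the congruences on $v$: in each case the congruence is the quadratic-reciprocity statement that $r$ is a square in $\Bbb Z_p^\times$, so $(a, r)_p = 0$ for every $a \in \Bbb Q_p^\times$. For instance, $p \equiv \pm 1 \pmod 8$ makes $2$ a square; $p \equiv 1, 3 \pmod 8$ makes $-2$ a square; $p \equiv 1 \pmod 3$ makes $-3$ and hence $-12 = 4\cdot(-3)$ a square; $p \equiv \pm 1 \pmod{12}$ makes $3$ and hence $12$ a square. The archimedean place also contributes $0$: when $r > 0$ this is automatic, and when $r < 0$ the hypothesis $v \neq \pm 1$ together with the congruence restrictions on the prime divisors of $v$ forces $m = 4 + rv^2 < 0$, whereupon \arf{infty} gives $|x|, |y|, |z| > 2$ on $U_m(\Bbb R)$ and so $(x^2-4, r)_\infty = 0$.

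It remains to analyse $p = 2$ and, when $3 \mid r$, the prime $p = 3$. The Markoff identity (\ref{markoff3}) shows that $(x+2)(y-2)(z-2) = (x-y-z+2)^2 - d$ is a norm from $\Bbb Q(\sqrt d)$, hence $((x+2)(y-2)(z-2), d) = 0$, giving $(x+2, d) = \mathcal B_2 + \mathcal B_3$ and so $B = \mathcal B_1 + \mathcal B_2 + \mathcal B_3$. For $r = -3$, all prime factors of $v$ are congruent to $1 \pmod 3$ and hence odd, so $d \equiv -3 \equiv 5 \pmod 8$ and $m \equiv 1 \pmod 8$; \arf{value-2} then gives $B(M_2) = 1/2 + 1/2 + 0 = 0$ uniformly, while $\mathrm{ord}_3(d) = 1$ and \arf{value-35} gives $B(M_3) = 1/2$ uniformly, for a total of $1/2$. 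For $r = \pm 12$, again $3 \nmid v$ so $\mathrm{ord}_3(d) = 1$ and \arf{value-35} yields $B(M_3) = 1/2$ uniformly; the condition $v^2 \equiv 25 \pmod{32}$ is calibrated so that a direct enumeration of $\mathcal U(\Bbb Z_2)$ modulo $32$ yields $B(M_2) = 0$ uniformly. For $r = \pm 2$ only $p = 2$ contributes, and a similar dyadic case analysis gives $B(M_2) = 1/2$ uniformly on $\mathcal U(\Bbb Z_2)$.

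The main obstacle is the explicit $2$-adic computation for $r = \pm 2, \pm 12$: one enumerates the residue classes of $(x, y, z) \in \mathcal U(\Bbb Z_2)$ modulo a suitable power of $2$ and, on each, evaluates $(x^2 - 4, r)_2$ via the formula for dyadic Hilbert symbols. The tight condition $v^2 \equiv 25 \pmod{32}$ in the case $r = 12$ is exactly what forces the $2$-adic invariant to be constant; a weaker restriction would leave residue classes carrying a nonzero local invariant, producing adelic points with vanishing global invariant and collapsing the obstruction.
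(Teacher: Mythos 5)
Your overall strategy matches the paper's: since $B$ has order $\le 2$, it suffices to show $\sum_p B(M_p) = 1/2$ for all adelic points, and the contribution can be nonzero only at $\infty$, $2$, and the odd primes dividing $rv$. Your handling of the odd primes $p \mid v$ (where the congruence hypotheses on $v$ make $r$ a square in $\Bbb Z_p^\times$) and of $\infty$ is correct, and for $r=-3$ your route through the relation $B = \mathcal B_1 + \mathcal B_2 + \mathcal B_3$ from (\ref{relation-br}) combined with Lemmas \ref{value-2} and \ref{value-35} is a legitimate, slightly slicker alternative to the paper's direct computation at $p=2,3$. Likewise your use of Lemma \ref{value-35} at $p=3$ for $r=\pm 12$ (noting $\ord_3(d)=1$) is fine.

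The real gap is exactly where you flag it: for $r=\pm 2$ you assert ``a similar dyadic case analysis gives $B(M_2)=1/2$ uniformly'' and for $r=\pm 12$ you assert the $2$-adic invariant is $0$ ``by a direct enumeration mod $32$,'' but neither computation is carried out, and they are in no sense automatic. For $r=\pm 2$ the argument is short (since $m=4+2v^2\equiv 2\bmod 4$ is not divisible by $4$, not all coordinates can be even, and an odd coordinate gives $x^2-4\equiv 5\bmod 8$, hence $(x^2-4,\pm 2)_2=1/2$), so the omission is minor. For $r=-12$ one must show at least one coordinate of $M_2$ is a unit — the paper does this by assuming all are even, dividing out, using $\pm 3\notin\Bbb Z_2^{\times 2}$ to force a further division, and deriving a contradiction mod $4$ — and then $(u^2-4,-3)_2=(-3,-3)_2=0$. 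For $r=12$ the argument is genuinely longer: the paper must iterate the division by $2$ one more step and split into four subcases on the parities of $\xi_1,\eta_1,\delta_1$, and the hypothesis $v^2\equiv 25\bmod 32$ is invoked precisely to rule out the two surviving subcases. None of this is ``calibrated so that it works''; it has to be done, and a weaker congruence on $v$ (e.g.\ $v^2\equiv 1$ or $9\bmod 32$) would genuinely leave local solutions with $B(M_2)=1/2$, making the argument fail. You should supply these $2$-adic case analyses in full before the proof can be considered complete.
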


\begin{proof}
When $r=\pm 2$,  for any $M_2=(x_2, y_2,z_2)\in \mathcal U(\Bbb Z_2)$, one of $x_2, y_2, z_2$ is a unit of $\Bbb Z_2$ by (\ref{p-eq}). For example, 
if $x_2$ is a unit, then
$$ x_2^2-4\equiv 5 \mod 8 \ \ \  \text{and} \ \ \  \ (x_2^2-4, \pm 2)_2= 1/2 .$$
Under the assumption $v\neq \pm 1$ when $r=-2$,  by Lemma \ref{infty}, $(x_\infty^2-4, \pm 2)_\infty=0$. 
For $M_p\in \mathcal U(\mathbb Z_p)$, one has
$$
B(M_p)=\begin{cases} 1/2 &\text{ if } p=2,\\
0 &\text{ otherwise }
\end{cases}
$$
by Lemma \ref{gen} and the given condition for $v$.
This implies $$\sum_{p\leq \infty}B(M_p)=1/2\neq 0,$$
 hence
$$ \mathcal U(A_\mathbb Z)^{B} =\emptyset.$$

\medskip

Suppose $r=-3,\pm 12$.
For any local solution $M_3=(x_3,y_3,z_3)\in \mathcal U(\Bbb Z_3)$, there is at least one coordinate of $M_3$ belonging to $3\Bbb Z_3$.  Otherwise, suppose $x_3$ and $y_3$ are in $\Bbb Z_3^\times$. Then $(x_3^2-4)(y_3^2-4)\in 9\Bbb Z_3$. A contradiction is derived by (\ref{p-eq}).
Since $(\alpha^2-4, r)_3= 1/2$ for  $\alpha\in 3\Bbb Z_3$,  one concludes that $B(M_3)=1/2$.

When $r=12$,  then $B=(x^2-4,  3)=(y^2-4,  3)=(z^2-4,  3)$. Since
$ (\frac{3}{p})=(-1)^{\frac{1}{2}(p-1)} (\frac{p}{3}) =1 $ for any $p\equiv \pm 1 \mod 12$ by the quadratic reciprocity law, by Lemma \ref{gen},  one only needs to consider $p=2$.  Similarly, for $r=-3,-12$, since $(\frac{-3}{p})=(\frac{p}{3})=1$ for $p\equiv 1 \mod 3$,  by Lemma \ref{infty} one reduces to the computation for $p=2$.

We claim that for any local solution $M_2=(x_2,y_2,z_2)\in \mathcal U(\Bbb Z_2)$,  there is at least one coordinate of $M_2$ in $\Bbb Z_2^\times$ when for $r=-3,\pm 12$. This is clear for $r=-3$ since $v$ is odd. Suppose $r=\pm 12$,  otherwise,  we can write $x_2=2\xi, y_2=2\eta$ and $z_2=2\delta$ with $\xi, \eta, \delta\in \Bbb Z_2$ and obtain the following equation
\begin{equation} \label{equ1} ( \xi^2-1)(\eta^2-1)=(\delta-\xi \eta)^2 - rv^2/4  \end{equation}  by (\ref{p-eq}). Since $\pm 3\not\in \Bbb Z_2^{\times 2}$, one concludes that $\xi$ and $\eta$ are in $2\Bbb Z_2$ by (\ref{equ1}). Similarly, $\delta \in 2\Bbb Z_2$.

Suppose $r=-12$. The left hand side of (\ref{equ1})  is $\equiv 1 \mod 4$, but the right hand side is $\equiv 3 \mod 4$, which is impossible. So there is at least one coordinate of $M_2$ in $\Bbb Z_2^\times$.

Suppose $r=12$. Write $\xi=2\xi_1, \eta=2\eta_1$ and $\delta=2\delta_1$ with $\xi_1, \eta_1, \delta_1\in \Bbb Z_2$.  One obtains that
\begin{equation} \label{equ2}  (4\xi_1^2-1)(4\eta_1^2-1) = 4 (\delta_1-2\xi_1\eta_1)^2 -3 v^2 . \end{equation}

If all $\xi_1, \eta_1$ and $\delta_1$ are in $2\Bbb Z_2$, then $-3\in \Bbb Z_2^{\times 2}$ by (\ref{equ2}), which is impossible.

If two of $\{ \xi_1, \eta_1, \delta_1\}$ are in $2\Bbb Z_2$ and the remaining one is in $\Bbb Z_2^\times$, we can write $$\xi_1=2a, \ \ \ \eta_1=2b \ \ \text{ with }  \ \ a, b  \in \Bbb Z_2$$ and $\delta_1\in \Bbb Z_2^\times$ by symmetry. Then
$$ 4-3v^2 \equiv (16 a^2-1)(16 b^2-1) \equiv \begin{cases}  1 \mod 32 \  & \text{when $a\in 2\Bbb Z_2$, $b\in 2\Bbb Z_2$} \\
-15  \mod 32 \  & \text{when } ab \in 2\Bbb Z_2 \\
15^2  \mod 32 \  & \text{when }  ab\in \Bbb Z_2^\times \end{cases} $$ by (\ref{equ2}).
This implies  
$$ v^2\equiv \begin{cases} 1 \mod 32 \  & \text{when $a\in 2\Bbb Z_2$, $b\in 2\Bbb Z_2$} \\
17 \mod 32 \ & \text{when $ab\in 2\Bbb Z_2$} \\
1 \mod 32 \ & \text{when $ab\in \Bbb Z_2^\times$} \end{cases} $$
which contradicts  the assumption on $v$.

If two of $\{ \xi_1, \eta_1, \delta_1\}$ are in $\Bbb Z_2^\times$ and the remaining one is in $2\Bbb Z_2$, we can assume $\delta_1\in 2\Bbb Z_2$ and $\xi_1, \eta_1\in \Bbb Z_2^\times$ by symmetry. This implies that $-3\in (\Bbb Z_2^\times)^2$ by (\ref{equ2}), which is impossible.

If all $\xi_1, \eta_1$ and $\delta_1$ are in $\Bbb Z_2^\times$, then
$ 3\cdot 3 \equiv 4- 3v^2 \mod 32 $ by (\ref{equ2}). Therefore $v^2\equiv 9 \mod 32$ which contradicts   the assumption on $v$.

Therefore the above claim follows, i.e., there is at least one coordinate of $M_2$ in $\Bbb Z_2^\times$.
Since $(\alpha_2^2-4,  \pm 3)_2=(-3,\pm 3)_2=0$ for $\alpha_2\in \Bbb Z_2^\times$,
one concludes that $B$ vanishes over $\mathcal U(\Bbb Z_2)$. For $M_p\in \mathcal U(\mathbb Z_p)$, one has
$$
B(M_p)=\begin{cases} 1/2 &\text{ if } p=3,\\
0 &\text{ otherwise. }
\end{cases}
$$
This implies
$$\sum_{p\leq \infty}B(M_p)=1/2\neq 0,$$
 hence $ \mathcal U(A_\mathbb Z)^{B} =\emptyset  $.
\end{proof}

\begin{rem}
	The element $B=(x^2-4,r) \in \Br(U)$ actually belongs to $\Br(X)$.
	Let $S$ be the finite set of primes which divide  $2d= 2rv^2$.
	For a prime $p \notin S$,  the element $B$
	vanishes not only on ${\mathcal U}(\Bbb Z_{p})$ but also on ${\mathcal U}(\Bbb Q_{p})$
	(Lemma \ref{gen}).  
	From $m>4$  and $m<0$  
	we get that $B$ vanishes on $U(\Bbb R)$
	(Lemma \ref{gen} and Lemma \ref{infty}).
	The above proof then shows that
	$$[\prod_{p \in S} \mathcal U(\Bbb Z_{p}) \times \prod_{p \notin S} U(\Bbb Q_{p})]^B$$ is empty.
	In particular, assuming there are $\Bbb Q_{p}$-points everywhere locally, we get that $U(\Bbb Q)$
	does not meet the open subset of
	$\prod_{p \in S} \mathcal U(\Bbb Z_{p}) $ which is orthogonal to the element $B$.
	This represents a lack of weak approximation -- which is a stronger result than
	the same statement for $\mathcal U(\Bbb Z)$.  
	
	On the other hand, for $m\neq 0,4$, it is a special case of a theorem of Salberger and Skorobogatov  \cite{SS} that the smooth cubic surface given by
	$t(x^2+y^2+z^2)-xyz= mt^3$  satisfies weak approximation with Brauer--Manin 
	obstruction.
	
\end{rem}

\begin{rem}  There is an error in the proof of \cite[Proposition 8.1 (i)]{GS}.  A contradiction is derived from the fact that $q\equiv \pm 5 \mod 8$ and $\{\pm 2\}$ is a quadratic residue modulo $q$. However, when $q\equiv 3 \mod 8$, $-2$ is a quadratic residue modulo $q$ and this is not a contradiction. The corresponding result should be modified. Moreover, the additional requirement that $v\in \{ 0,\pm 3, \pm 4 \} \mod 9$ can be replaced by the local condition in \cite[Proposition 6.1]{GS}.
\end{rem}

Proposition 8.3 in \cite{GS} can be improved as follows.

\begin{prop}  Let $v$ be an integer all prime factors of which are  congruent to $\pm 1 \mod 5$.
Let $\mathcal U$ be the scheme over $\Bbb Z$ given by the equation
$$ x^2+y^2+z^2 -xyz=m=4+20 v^2$$
and let $U=\mathcal U\times_{\Bbb Z} \Bbb Q$. 
Then
$ \mathcal U(A_\mathbb Z)^{\Br_1(U)} =\emptyset$.  \end{prop}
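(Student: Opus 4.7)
The plan is to exhibit a Brauer-Manin obstruction using $\Br_1(U)$. Since $d=20v^2=5\cdot(2v)^2$, the classes of $d$ and $5$ coincide in $\Bbb Q^\times/\Bbb Q^{\times 2}$, so I may replace $d$ by $5$ in quaternion classes $(\cdot\,,d)$. The hypothesis on $v$ forces $v$ odd and coprime to $5$; in particular $d$ is not a square. In the generic situation where $m$ and $md$ are also not squares (other cases handled analogously via Theorem \ref{br}), Theorem \ref{br} yields $\Br_1(U)/\Br_0(U)\cong(\Bbb Z/2)^3$ with generators $\mathcal B_i=(x_i-2,5)$, where $(x_1,x_2,x_3)=(x,y,z)$. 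The statement $\mathcal U(A_\Bbb Z)^{\Br_1(U)}=\emptyset$ is equivalent to: for every $(M_p)\in\mathcal U(A_\Bbb Z)$, the triple $\bigl(\sum_p\mathcal B_i(M_p)\bigr)_{i=1,2,3}\in(\Bbb Z/2)^3$ is nonzero.

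I would first compute the local invariants $\mathcal B_i(M_p)$ for $p\neq 2$. At $p=\infty$, Lemma \ref{gen} applies because $d>0$. At an odd prime $p$ different from $5$ and not dividing $v$, Lemma \ref{gen} gives vanishing invariants. At an odd $p\mid v$, the hypothesis $p\equiv\pm 1\pmod 5$ and quadratic reciprocity give $\bigl(\tfrac 5p\bigr)=\bigl(\tfrac p5\bigr)=1$, so $5\in\Bbb Q_p^{\times 2}$ and the $\mathcal B_i$ vanish in $\Br(\Bbb Q_p)$. At $p=5$, one has $\ord_5(d)=1$, so Lemma \ref{value-35} gives $B(\mathcal U(\Bbb Z_5))=(\tfrac12\Bbb Z/\Bbb Z)^3\setminus\{(0,0,0)\}$. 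Consequently $\sum_p\mathcal B_i(M_p)=\mathcal B_i(M_2)+\mathcal B_i(M_5)$, and it suffices to show $B(\mathcal U(\Bbb Z_2))=\{(0,0,0)\}$, for then the sum equals $B(M_5)\neq 0$ for some $i$.

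For the $p=2$ claim I use that $5\equiv 5\pmod 8$ makes $\Bbb Q_2(\sqrt5)/\Bbb Q_2$ unramified, so $(a,5)_2=0$ iff $\ord_2(a)$ is even; hence I must show $\ord_2(x-2),\,\ord_2(y-2),\,\ord_2(z-2)$ are all even for any $(x,y,z)\in\mathcal U(\Bbb Z_2)$. Since $v$ is odd, $m=4+20v^2\equiv 24\pmod{32}$. A mod $2$ reduction shows the number of odd coordinates is $0$, $2$, or $3$. The all-odd case is trivial since each $x_i-2$ is odd. In the all-even case, writing $x=2x_1$, $y=2y_1$, $z=2z_1$ the equation becomes $x_1^2+y_1^2+z_1^2-2x_1y_1z_1=m/4\equiv 6\pmod 8$, forcing exactly two of $x_1,y_1,z_1$ to be odd; a further mod $8$ analysis then produces $c(c-x_1y_1)\equiv 1\pmod 2$ for the remaining (even) coordinate $2c$, which is impossible because $x_1y_1$ is odd and so the two factors have opposite parities.

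The main obstacle is the two-odd-one-even case, say $x,y$ odd and $z$ even. Then $\ord_2(x-2)=\ord_2(y-2)=0$, and I must show $\ord_2(z-2)$ is even. A mod $4$ analysis forces $z\equiv 2\pmod 4$, hence $\ord_2(z-2)\geq 2$. A mod $8$ reduction splits into $z\equiv 6\pmod 8$ (giving $\ord_2(z-2)=2$) and $z\equiv 2\pmod 8$. In the latter subcase I would push the $2$-adic analysis mod $16$ and then mod $32$, repeatedly using $m\equiv 24\pmod{32}$, to force $z\equiv 18\pmod{32}$ and hence $\ord_2(z-2)=4$. In every subcase $\ord_2(z-2)$ is even, yielding $B(M_2)=(0,0,0)$. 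The technical heart of the proof is thus the casework at $p=2$, carried out to the appropriate $2$-adic precision (up to mod $32$) to control the parity of $\ord_2(z-2)$.
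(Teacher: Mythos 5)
Your proof is correct and takes a genuinely different route from the paper's. The paper works with the six classes $(x\pm 2,5)$, $(y\pm 2,5)$, $(z\pm 2,5)$ and does not attempt to pin down their individual values at $p=2$; it splits into cases a5), b5) at $p=5$ and a2), b2) at $p=2$ and checks that in each of the four combinations some class has total invariant $1/2$. You instead use the three generators $\mathcal B_i=(x_i-2,5)$ of $\Br_1(U)/\Br_0(U)$ (from Theorem~\ref{br}) and establish the sharper local statement $B(\mathcal U(\Bbb Z_2))=\{(0,0,0)\}$; combined with Lemma~\ref{value-35}, which says $B(\mathcal U(\Bbb Z_5))$ avoids $(0,0,0)$, and the vanishing of all other local contributions, this gives the result immediately. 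Along the way you observe, correctly, that the all-even case at $p=2$ is in fact vacuous, a fact the paper handles but does not remark.

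Two observations. The hedge ``in the generic situation where $m$ and $md$ are also not squares'' is unnecessary: since $v$ is odd, $\ord_2(m)=\ord_2\bigl(4(1+5v^2)\bigr)=3$ and $\ord_2(md)=5$ are odd, so $m$ and $md$ are never squares and Theorem~\ref{br} applies without further hypothesis. And the mod~$16$ and mod~$32$ casework in the two-odd-one-even case is only sketched; I have checked it does force $z\equiv 18\pmod{32}$ when $z\equiv 2\pmod 8$, but there is a shorter route that avoids the congruence chasing. A direct algebraic consequence of $x^2+y^2+z^2-xyz=m$ is the identity $(z-2)(z-xy+2)=d-(x-y)^2$. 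With $x,y$ odd and $z$ even, $z-xy+2$ is odd, so $\ord_2(z-2)=\ord_2\bigl(d-(x-y)^2\bigr)$. Writing $x-y=2t$ gives $d-(x-y)^2=4(5v^2-t^2)$ with $5v^2$ odd; when $t$ is even, $5v^2-t^2$ is odd, and when $t$ is odd, $5v^2-t^2\equiv 5-1=4\pmod 8$. Either way $\ord_2(z-2)\in\{2,4\}$ is even, so $(z-2,5)_2=0$, which gives $B(\mathcal U(\Bbb Z_2))=\{(0,0,0)\}$ cleanly.
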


The smallest positive  such $v$ is $v=11$, which gives $m=4+20 v^2=2424$.

\begin{proof} We only consider the following subset  $A$ of $\Br_1(U)$
$$ \{(x \pm 2,  5), (y \pm 2,  5), (z \pm 2,  5)\}. $$
Then each element  $\beta \in A$  vanishes  over $\mathcal U(\Bbb Z_p)$ for $p\neq 2, 5$ by Lemma \ref{gen} and  the property $(\frac{5}{p})=(\frac{p}{5})=1$ for $p\equiv \pm 1 \mod 5$.

\medskip

 Let $M_5=(x_5,y_5,z_5)\in \mathcal U(\Bbb Z_5)$.
 By permutation of the coordinates and reduction of the equation
 $$ (x^2-4)(y^2-4)=(2z-xy)^2 -80 v^2$$
  modulo 25, one sees that there is
 at most one coordinate of $M_5$ which is congruent $\pm 2 \mod 5$.

We consider
$$ V= (x_5^2-4, 5)_5=(y_5^2-4, 5)_5=(z_5^2-4, 5)_5.$$

We  have two possibilities.

a5) At least one of the coordinates is $\pm 1 \mod 5$, then
$V=1/2$. Therefore
half of the elements in $A$  vanish at $M_5$ and the other half do not vanish.

b5)  Two coordinates of $M_5$ are in $5\Bbb Z_5$ and the remaining one is $\pm 2 \mod 5$.
In this case, $V=0$. Without loss of generality, we assume $x_5, y_5 \in 5 \Bbb Z_5$. Then  $ z_5^2 \equiv 4+ 20 \mod 25$ by the given equation. This implies that $z_5\equiv \pm 7 \mod 25$. Therefore
$$ (x_5\pm 2,  5)_5= (y_5\pm 2,  5)_5=1/2 \ \ \ \text{and} \ \ \ (z_5\pm 2,  5)_5=0 .$$

Thus for any point $M_{5} \in U(\Bbb Z_5)$ at most 3 of the elements in $A$ vanish at $M_{5}$.

\medskip

Let now  $M_2=(x_2,y_2,z_2)\in \mathcal U(\Bbb Z_2)$.
Recall that $(2,5)_{2}=1/2$ and $(u,5)_{2}=0$ for any $u \in \Bbb Z_{2}^{\times}$.

a2) If one  coordinate, say $x_{2}$, belongs to $\Bbb Z_2^{\times}$,  then
each of $x_{2} \pm 2 $ is in $\Bbb Z_2^{\times}$ hence $(x_{2} \pm 2 ,5)_{2}=0$.
From the given equation we immediately see that if
 $M_2$  has one coordinate in  $\Bbb Z_2^\times$,  then it has at least 2.
 This then implies that at least 4 elements   in $A$ vanish at $M_2$.

b2)  If no coordinate of $M_{2}$  is in $\Bbb Z_2^\times$, then
one can write $$x_2=2\xi,  \ y_2=2\eta, \ z_2=2\delta \  \ \ \text{with} \  \ \  \xi, \eta, \delta\in \Bbb Z_2$$
 and the equation gives
$$ ( \xi^2-1)(\eta^2-1)=(\delta-\xi \eta)^2 - 5v^2 . $$
Since $5\not\in \Bbb Z_2^{\times 2}$, one concludes that $\xi$ and $\eta$ are in $2\Bbb Z_2$.
Similarly, $\delta \in 2\Bbb Z_2$.  For each element in the set
$$  \{(x \pm 2,  5),  (y \pm 2,  5),  (z \pm 2,  5)\}  $$
the value it takes on $M_{2}$ is of the shape $(2u,5)_{2}$ with $u \in \Bbb Z_{2}^{\times}$.  We see that
 all elements in $A$  take the value $1/2$ at $M_2$.

 \medskip
It is then an easy matter to see that in whichever combination of one of a5), b5) with one of a2),  b2),
there exists an element $\beta \in B$ such that
$\beta(M_{5})+ \beta(M_{2}) \neq 0$.  Hence
for any ad\`ele $\{ M_{p}\} \in  \mathcal{U} (A_\mathbb Z)$ there exists an element $\beta \in A$
with the property
$$ \sum_{p} \beta(M_{p}) \neq 0 \in \Bbb Q/\Bbb Z.$$
\end{proof}

\subsection{Combination of Brauer-Manin obstruction with the reduction theory}

\begin{lem} \label{vanish} Suppose $m\neq 0,4$ and $d=m-4$. Let $p$ be an odd prime such that $ord_p(d)$ is even and positive.  Then there is a point $(x_p,y_p,z_p)\in \mathcal U_m(\mathbb Z_p)$ such that
$$(x_p-2,d)_p=(y_p-2,d)_p=(z_p-2,d)_p=0.$$
\end{lem}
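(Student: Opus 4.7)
The plan is to exhibit an explicit $\mathbb{Z}_p$-point with the three valuations $\ord_p(x_p-2),\ \ord_p(y_p-2),\ \ord_p(z_p-2)$ all even, because for an odd prime $p$ and $d=p^{2n}d_0$ with $d_0\in \Bbb Z_p^\times$ and $n\ge 1$, the explicit formula for the Hilbert symbol on $\Bbb Q_p^\times$ gives $(a,d)_p=\ord_p(a)\cdot(p,d_0)_p$, which vanishes for every $a$ with even $p$-adic valuation (and also vanishes identically when $d_0$ is a square). Thus the whole lemma reduces to producing one point whose three $-2$-translates have even $p$-adic valuations.

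The construction proceeds as follows. I would first pick $X\in \Bbb Z_p^\times$ such that $X+4\in \Bbb Z_p^\times$; for $p$ odd such an $X$ clearly exists (take $X=1$ if $p\ne 5$, and $X=2$ if $p=5$). Then I impose $x_p=y_p=2+X$ and use (\ref{markoff1}) to solve for $z_p$. Substituting, $z_p$ satisfies the quadratic
\[
z^2-(2+X)^2\,z + \bigl(2(2+X)^2 - m\bigr)=0,
\]
whose discriminant simplifies to $[X(X+4)]^2+4d$. Writing it as $[X(X+4)]^2\bigl(1+\tfrac{4d}{[X(X+4)]^2}\bigr)$, the correction term lies in $p^{2n}\Bbb Z_p$ with $n\ge 1$. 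Since $p$ is odd, $1+p\Bbb Z_p\subset (\Bbb Z_p^\times)^2$ by Hensel's lemma, so the discriminant is a unit square and both roots $z_p^\pm\in \Bbb Z_p$ are defined.

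I then select the correct root. Writing $w$ for the square root of the discriminant, chosen to satisfy $w\equiv X(X+4)\pmod p$, the two solutions are $z_p^\pm=\bigl((2+X)^2\pm w\bigr)/2$, so that $z_p^\pm-2=\bigl(X(X+4)\pm w\bigr)/2$. The identity $(X(X+4)-w)(X(X+4)+w)=-4d$ together with $X(X+4)+w\equiv 2X(X+4)\in \Bbb Z_p^\times$ forces $\ord_p\bigl(X(X+4)-w\bigr)=\ord_p(4d)=2n$. Therefore for the point $(x_p,y_p,z_p):=(2+X,2+X,z_p^{-})$ we have $\ord_p(x_p-2)=\ord_p(y_p-2)=0$ and $\ord_p(z_p-2)=2n$, all even, which gives the three vanishing Hilbert symbols as required.

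The only subtle step is the valuation count for $z_p^{-}-2$, since a naive mod-$p$ reduction of the quadratic would only produce the Case $+$ root (with $z_p-2$ a unit). The key is not to work modulo $p$ but to use the algebraic identity for the product of the two roots, together with the fact that $\ord_p(d)$ is \emph{even}, to produce a root whose valuation is exactly $2n$; this is where the evenness hypothesis on $\ord_p(d)$ in the statement is actually used.
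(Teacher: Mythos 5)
Your proof is correct, and at the level of constructing the point it is essentially the same as the paper's: both take $x_p=y_p=a$ for a unit $a$ with $a\not\equiv\pm 2\pmod p$ (in your notation $a=2+X$) and then solve the resulting quadratic in $z$, using Hensel's lemma to see that the discriminant $(a^2-4)^2+4d$ is a unit square because $\ord_p(d)>0$. Where the two arguments diverge is in the treatment of the third symbol $(z_p-2,d)_p$. The paper's proof invokes the identity (\ref{relation-br}), $((x-2)(y-2)(z-2),d)=(x^2-4,d)$, so that once $(x_p-2,d)_p=(x_p+2,d)_p=(y_p-2,d)_p=0$ are known, $(z_p-2,d)_p=0$ follows without ever computing $\ord_p(z_p-2)$. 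You instead extract the valuation of $z_p^{-}-2$ directly from the norm relation $(z_p^{+}-2)(z_p^{-}-2)=-d$ together with $z_p^{+}-2\in\Bbb Z_p^\times$, getting $\ord_p(z_p^{-}-2)=\ord_p(d)=2n$, which is even; this is exactly where the evenness hypothesis enters your argument. Both routes are valid and of comparable length: yours is more self-contained (it avoids (\ref{relation-br}) and exhibits the parity of all three valuations explicitly), while the paper's is slightly slicker by off-loading the third symbol onto an identity already in place.
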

\begin{proof} For any odd prime $p$ and  $a\neq \pm 2$ in the  finite field $\mathbb F_p$,
the point  $(a,a,2)$  is a smooth point of   the affine variety over $\mathbb F_p$ defined by  $x^2+y^2+z^2-xyz=4$.
  By Hensel's Lemma, there exists a point $(x_p,y_p,z_p)\equiv (a,a,2) \mod p$ in $\mathcal U_m(\mathbb Z_p)$. Therefore
$$(x_p+2,d)_p=(x_p-2,d)_p=(y_p-2,d)_p=0.$$
By (\ref{relation-br}),
one has $(z_p-2,d)_p=0$.
\end{proof}

The following proposition points out that \cite[Proposition 8.1 ii)]{GS} cannot be explained only by Brauer-Manin obstruction.

\begin{prop}\label{non-empty}  Let $\mathcal U$ be the scheme over $\Bbb Z$ given by
\begin{equation}\label{equ-non-empty}
 x^2+y^2+z^2 -xyz=4+ 2  l^2w^2
 \end{equation}
 where $w$ is an odd integer and $l$ is a prime with $l \equiv   \pm 3 \mod 8$.

  If $l w \equiv \pm 4 \mod 9$,
 then
$\mathcal U(A_\mathbb Z)^{\Br}  \neq \emptyset$.
\end{prop}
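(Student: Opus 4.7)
First observe that $m=4+2l^{2}w^{2}$ satisfies $m\equiv 6\pmod{16}$ (since $l,w$ are odd, so $(lw)^{2}\equiv 1\pmod 8$) and $m\equiv 0\pmod 9$ (since $(lw)^{2}\equiv 16\equiv 7\pmod 9$). In particular $m\not\equiv 3\pmod 4$ and $m\not\equiv \pm 3\pmod 9$, so $m$ is admissible and $\mathcal{U}(A_{\mathbb{Z}})\neq\emptyset$ by fact (0) of the Introduction. Moreover $d=2(lw)^{2}>0$, and none of $m,d,md$ is a rational square (for instance $m\equiv 6\pmod 8$ is not a $2$-adic square, $d$ lies in $2\cdot\mathbb{Q}^{\times 2}$, and $md/4=(lw)^{2}(2+(lw)^{2})$ requires $2+(lw)^{2}\equiv 3\pmod 8$ to be a square, impossible). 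By Theorem~\ref{br} in the case $[k(\sqrt{m},\sqrt{d}):k]=4$ and Corollary~\ref{br-prac} (since $d>0$), this gives $\Br(U)=\Br_{1}(U)$ and
\[
\Br_{1}(U)/\Br_{0}(U)\cong(\mathbb{Z}/2)^{3},
\]
generated by $\mathcal{B}_{1}=(x-2,d)$, $\mathcal{B}_{2}=(y-2,d)$, $\mathcal{B}_{3}=(z-2,d)$.

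Set $B(M_{p})=(\mathcal{B}_{1}(M_{p}),\mathcal{B}_{2}(M_{p}),\mathcal{B}_{3}(M_{p}))\in(\mathbb{Z}/2)^{3}$. Since $d>0$ and $(p,d)=1$ for odd $p\nmid 2lw$, Lemma~\ref{gen} forces $B(M_{p})=0$ at $p=\infty$ and at every odd $p\nmid 2lw$. At every odd $p\mid w$ with $p\neq l$, $v_{p}(d)=2v_{p}(w)$ is positive and even, so Lemma~\ref{vanish} produces $M_{p}\in\mathcal{U}(\mathbb{Z}_{p})$ with $B(M_{p})=0$. The entire obstruction thus collapses to finding $M_{2}\in\mathcal{U}(\mathbb{Z}_{2})$ and $M_{l}\in\mathcal{U}(\mathbb{Z}_{l})$ with $B(M_{2})+B(M_{l})=0$. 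Using $(x-y-z+2)^{2}-d=(x+2)(y-2)(z-2)$ from (\ref{markoff3}) together with $(A^{2}-d,d)=0$, one records the constraint that the three coordinates of $B(M_{p})$ sum to $V(M_{p}):=(x_{p}^{2}-4,d)_{p}$ at every place.

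At $p=2$, $d\equiv 2\pmod{\mathbb{Q}_{2}^{\times 2}}$ and a short congruence analysis modulo $16$ shows that $V(M_{2})=1/2$ for every $M_{2}\in\mathcal{U}(\mathbb{Z}_{2})$, so the coordinates of $B(M_{2})$ sum to $1/2$. Explicit lifts via the quadratic in one coordinate produce, for instance, the values $B(M_{2})=(1/2,0,0)$ (from $(x_{2},y_{2},z_{2})\equiv(5,1,4)\pmod 8$, the discriminant in $z$ being $8(lw)^{2}-79\equiv 9\pmod{32}$, hence a $2$-adic square) and $B(M_{2})=(1/2,1/2,1/2)$ (from $(x_{2},y_{2},z_{2})\equiv(13,13,13)\pmod{16}$, which is a smooth $\mathbb{Z}_{2}$-point). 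At $p=l$, $d\equiv 2\pmod{\mathbb{Q}_{l}^{\times 2}}$ and $2$ is a non-square modulo $l$ (as $l\equiv\pm 3\pmod 8$), so $(\alpha,d)_{l}=\tfrac{1}{2}v_{l}(\alpha)\pmod 1$. We construct $M_{l}=(2+l,\,-2+l\eta,\,-2+l\zeta)$ for suitable units $\eta,\zeta\in\mathbb{Z}_{l}^{\times}$: expanding the equation modulo $l^{3}$ reduces to the $\mathbb{F}_{l}$-condition
\[
(1+\eta+\zeta)^{2}-4\eta\zeta\equiv 2w^{2}\pmod{l},
\]
a quadratic in $\eta$ of discriminant $8(w^{2}-2\zeta)$. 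Since $8$ is itself a non-square modulo $l$, we pick any $\zeta\in\mathbb{F}_{l}^{\times}$ making $w^{2}-2\zeta$ a non-square (and avoiding the at most two values of $\zeta$ forcing $1+\eta+\zeta\equiv 0$), which then yields $\eta\in\mathbb{F}_{l}^{\times}$ solving the equation.

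Although $(2,-2,-2)$ is a node of $\mathcal{U}_{4}\bmod l$, one computes $\partial_{x}f=2l(1+\eta+\zeta)+O(l^{2})$, so $v_{l}(\partial_{x}f)=1$ while our parametrization gives $v_{l}(f)\geq 3$; Hensel's lemma applied to the quadratic in $x$ (with discriminant a square in $\mathbb{Z}_{l}$ by the preceding computation) lifts the mod-$l^{2}$ solution to a genuine point of $\mathcal{U}(\mathbb{Z}_{l})$. Finally, $(x_{l}-2,d)_{l}=(l,2)_{l}=1/2$, while $(y_{l}-2,d)_{l}=(-4+l\eta,2)_{l}$ equals $0$ if $-4$ is a square modulo $l$ (i.e., $l\equiv 5\pmod 8$) and $1/2$ otherwise (i.e., $l\equiv 3\pmod 8$), and similarly for $z_{l}$. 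Hence $B(M_{l})=(1/2,0,0)$ when $l\equiv 5\pmod 8$ and $B(M_{l})=(1/2,1/2,1/2)$ when $l\equiv 3\pmod 8$; in either case, matching with the appropriate $M_{2}$ from the 2-adic list above gives $B(M_{2})+B(M_{l})=0$ and hence the desired adelic point in $\mathcal{U}(A_{\mathbb{Z}})^{\Br}$. The main technical obstacle is the Hensel-lifting step at $p=l$ near the node of the singular reduction, which is handled via the explicit parametrization above.
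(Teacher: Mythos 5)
Your overall strategy mirrors the paper's: verify local solvability via \cite[Prop.~6.1]{GS}, compute $\Br(U_m)/\Br_0(U_m)\cong(\Bbb Z/2)^3$ (using Theorem~\ref{br} and Corollary~\ref{br-prac}), kill the obstruction at $\infty$ and at primes $p\nmid 2lw$ via Lemma~\ref{gen}, at $p\mid w$, $p\neq l$ via Lemma~\ref{vanish}, and then construct explicit points $M_2\in\mathcal U(\Bbb Z_2)$ and $M_l\in\mathcal U(\Bbb Z_l)$ whose Brauer pairings cancel. However, your $l$-adic Hilbert symbol computation is wrong. You assert that $(y_l-2,d)_l=(-4+l\eta,2)_l$ equals $0$ if $-4$ is a square mod $l$ and $1/2$ otherwise. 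But $-4+l\eta$ and $2$ are both $l$-adic units, and the Hilbert symbol of two units at an odd prime is always $0$. You appear to have substituted the Legendre symbol $\left(\tfrac{-4}{l}\right)$, which would compute $(y_l-2,l)_l$, not $(y_l-2,2)_l$; since $d=2(lw)^2$ has even $l$-adic valuation, its class in $\Bbb Q_l^\times/\Bbb Q_l^{\times 2}$ is that of the unit $2$, and no Legendre symbol arises. Consequently $B(M_l)=(1/2,0,0)$ for every admissible $l$: the case split $l\equiv 3$ vs.\ $5\pmod 8$ is spurious, and the auxiliary $M_2$ with $B(M_2)=(1/2,1/2,1/2)$ is never needed. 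Fortunately the argument still closes once corrected: with $(x_2,y_2)\equiv(5,1)\pmod 8$, the identity (\ref{relation-br}) forces $B(M_2)=(1/2,0,0)$, so $B(M_2)+B(M_l)=0$. (Minor slip: for $(x_2,y_2)=(5,1)$ the $z$-discriminant is $4m-79=8(lw)^2-63$, not $8(lw)^2-79$; the conclusion that it is $\equiv 9\pmod{64}$, hence a $2$-adic square, is correct.)

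For comparison, the paper's explicit points are simpler and avoid the delicate Hensel-lifting near the node that you discuss. At $p=2$ it takes $x_2=y_2=1$ and solves $z^2-z=2+2(lw)^2$, yielding $B(M_2)=(0,0,1/2)$ directly. At $p=l$ it substitutes $(x,y,z)=(-2+\alpha l,-2+\beta l,2+\gamma l)$, reducing to $(\alpha+\beta+\gamma)^2-\alpha\beta(4+l\gamma)=2w^2$ and then to finding $(a,b,c)\in\Bbb F_l\times\Bbb F_l^\times\times\Bbb F_l^\times$ with $a^2-4bc=2w^2$ and $a\neq b+c$ (the latter forced by $\left(\tfrac{2}{l}\right)=-1$), giving $B(M_l)=(0,0,1/2)$ — a coordinate permutation of your (corrected) value.
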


\begin{proof}  By \cite[Proposition 6.1]{GS}, the condition $l w \equiv \pm 4 \mod 9$ implies 
$\prod_{p\leq \infty} \mathcal U(\Bbb Z_p) \neq \emptyset $. Since $l w$ is odd, the integer $4+2l^2w^2$ is not a square. Therefore, by Corollary \ref{br-prac} and Theorem \ref{br}, the quotient
  $\Br(U)/\Br_0(U)$ is generated by \begin{equation} \label{elements} \{(x-2, 2), (y-2, 2), (z-2, 2) \} \end{equation}

By Lemma \ref{gen}, for $p\nmid 2lw$,
the  three elements in (\ref{elements})  vanish over $\mathcal U(\Bbb Z_p)$.  By Lemma \ref{vanish}, there is a $\mathbb Z_p$-point $M_p$
at which all three elements in (\ref{elements})  vanish for any $p\mid w$ and $p\neq l$. We fix such points.

We shall construct suitable  local points $M_p=(x_p,y_p,z_p)$ for $p=2, l$.

For $p=2$, we take $x_2=y_2=1$.  By Hensel's Lemma, there is $z_2\in \Bbb Z_2^\times$ satisfying  \begin{equation} \label{hr} z^2-z=2+2l^2w^2 \end{equation}.
 Then
$  (x_2- 2,  2)_2= (y_2- 2,  2)_2=0$ and
$$(z_2-2, 2)_2= (-1-r, 2)_2= \frac{1}{2},$$
where $r$ is the other root of (\ref{hr}) with $\ord_2(r)=\ord_2(2+2l^2w^2)=2$.

Over the finite field $\Bbb F_{l}$, we can choose $(a,b,c)\in \Bbb F_{l}\times \Bbb F_{l}^\times\times\Bbb F_{l}^\times$ satisfying $a^2-4bc=2w^2$. Obviously $a-b-c\neq 0$, otherwise
we have $(b-c)^2=2w^2$, which is impossible since $(\frac{2}{l})=-1$.
Therefore $(b,c, a-b-c)$
is a solution of  the equation
$$(x'+y'+z')^2-4x'y'=2w^2 \ \mod l$$ with $x'y'z'\neq 0$, hence  by Hensel's lemma there is a solution $(\alpha_l, \beta_l, \gamma_l)$ of the equation
$$ (x'+y'+z')^2-x'y'(4+l \cdot z')=2w^2$$ over $\Bbb Z_l$ with $\gamma_l \in \Bbb Z_l^\times$.
Then $$(x_l,y_l,z_l)=(-2+\alpha_l  l,-2+\beta_l l, 2+\gamma_l l)\in \mathcal U_m(\mathbb Z_l) $$ with  $$(x_l-2,2)_l=(y_l-2,2)_l=0 \text{ and } (z_l-2,2)_l=1/2.$$
One concludes that  $$(x_p,y_p,z_p)_{p\leq \infty}\in \mathcal U(A_\mathbb Z)^{\Br} $$ as desired.
\end{proof}

If   $w=1$ in Proposition \ref{non-empty} and
$l$ is a sufficiently large prime,
 one can still prove
  the equation (\ref{equ-non-empty})
has no integral solutions by combining Brauer-Manin obstruction with the reduction theory as given in \cite[Proposition 8.1 ii)]{GS}. In fact, we  produce more counterexamples.

\begin{prop}  \label{cb} The equation
$$ x^2+y^2+z^2-xyz= 4+ r l^2$$ has no integral solution in each of the following cases:

i) $r=2$ and $l\geq 13$ is a prime with $l \equiv \pm 4 \mod 9$;

ii) $r=12$ and $l\geq 37$ is a prime, $l^2 \equiv 25 \mod 32$ and $1+3l^2$ is not a sum of two squares (e.g. $l=37,43,...$);

iii) $r=-2$ and $l\geq 13$ is a prime;

iv) $r=-3$ and $l\geq 17$ is a prime;

v) $r=-12$ and  $l\geq 37$ is a prime.
\end{prop}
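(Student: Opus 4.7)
The strategy is to adapt Ghosh--Sarnak's method in \cite[Prop.~8.1(ii)]{GS} by combining the Brauer--Manin obstruction of Proposition~\ref{8.1-8.2} with Markov--Hurwitz reduction theory. As Proposition~\ref{non-empty} already warns, in several of the listed subcases the set $\mathcal{U}(A_{\Bbb Z})^{\Br}$ is \emph{non-empty}, so the Brauer--Manin obstruction alone cannot suffice and the reduction theory is essential.

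First I would verify local solvability: in each of the five cases, the congruences on $l$ together with \cite[Prop.~6.1]{GS} ensure the mod $4$ and mod $9$ conditions, and $U(\Bbb R)\neq\emptyset$ is immediate, so $\mathcal{U}(A_{\Bbb Z})\neq\emptyset$.  Now assume for contradiction that $P_{0}\in\mathcal{U}(\Bbb Z)$.  Since $\Gamma$ acts on $\mathcal{U}(\Bbb Z)$, Proposition~\ref{induction} allows me to replace $P_{0}$ by a reduced representative $(x,y,z)$, say with $|x|\leq|y|\leq|z|$ and $|z|\leq|xy|/2$.  The equation (\ref{markoff1}) then forces an explicit bound $\max(|x|,|y|,|z|)\leq C\sqrt{|m|}$ that is essentially linear in $l$.

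Next, I would view $P_{0}$ as an adelic point and pair it with the Brauer class $B=(x^{2}-4,r)\in\Br(U)$ of Proposition~\ref{8.1-8.2}.  By Lemma~\ref{gen} combined with the hypothesized congruences on $l$, the local contribution at every prime $p\neq 2,3,l$ vanishes, and at $p=2$ (in cases (i) and (iii)) or $p=3$ (in cases (ii), (iv), (v)) the contribution equals $1/2$, by the same calculation carried out in Proposition~\ref{8.1-8.2}.  Global reciprocity for $B$ then forces $B(P_{0,l})=1/2$, which is a strong constraint on $(x,y,z) \bmod l$.  Combining this forced residue with the reduction bound and with the auxiliary arithmetic hypotheses (the explicit lower bounds $l\geq 13,17,37$, and in case (ii) the conditions $l^{2}\equiv 25\bmod 32$ and ``$1+3l^{2}$ not a sum of two squares''), one then enumerates the finitely many candidate reduced $(x,y,z)$ compatible with these constraints and checks that none satisfies $x^{2}+y^{2}+z^{2}-xyz=4+rl^{2}$.

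The hard part will be making the reduction-theory bound quantitatively precise enough to enumerate the finitely many reduced candidates, and then dispatching the boundary cases in (ii) and (v) where $l$ evades the direct Brauer--Manin hypothesis of Proposition~\ref{8.1-8.2} (i.e.\ $l\not\equiv\pm 1\bmod 12$).  The auxiliary congruence $l^{2}\equiv 25\bmod 32$ is exactly what fixes the $2$-adic Hilbert symbol needed to make the global reciprocity force a contradiction at $l$, while the sum-of-two-squares condition in (v) eliminates the last few reduced candidates that would otherwise survive; these are precisely the arithmetic inputs that reduction theory cannot see on its own.
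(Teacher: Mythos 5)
Your overall plan — combine the Hilbert-reciprocity argument from Proposition~\ref{8.1-8.2} with the reduction theory of Proposition~\ref{induction} — is exactly the paper's strategy, and you correctly observe that this is forced by Proposition~\ref{non-empty}. You also correctly identify that $l$ is the one prime which escapes the congruence hypotheses of Proposition~\ref{8.1-8.2}. However, your description of the contradiction misses the actual mechanism, and two of your claims about the auxiliary hypotheses are off.

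The key gap: you propose to ``enumerate the finitely many candidate reduced $(x,y,z)$ compatible with these constraints and check that none satisfies the equation,'' but there is no enumeration to do, and it is not clear such an enumeration would even terminate cleanly. The paper's argument is much sharper. Reduction theory produces a representative with smallest coordinate $x_0$ satisfying $3 \leq |x_0|$ (this is where genericity is essential) together with an explicit upper bound. For $m>0$ the bound is $|x_0| \leq (4+rl^2)^{1/3} < l-2$ for $l$ as large as stipulated; for $m<0$ the chain $|m|-4 \geq \tfrac{1}{2}(l-6)(l-2)^2$ (assuming $x_0 \geq l-2$) yields a contradiction, so again $x_0 < l-2$. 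The two-sided bound $3 \leq |x_0| < l-2$ then directly implies $l \nmid x_0^2-4$, hence $(x_0^2-4,r)_l = 0$. But the local computations at $p \in \{2,3,\infty\}$ and Lemma~\ref{gen} give $\sum_{p\neq l}(x_0^2-4,r)_p = 1/2$, so reciprocity would force $(x_0^2-4,r)_l = 1/2$. That is the contradiction; no candidate ever needs to be examined. In short, the reduction bound doesn't narrow a candidate list — it directly annihilates the $l$-adic Hilbert symbol.

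Two further corrections: the sum-of-two-squares hypothesis appears in case (ii), not case (v); and its role is not to ``eliminate reduced candidates'' but to verify that $m = 4+12l^2$ is generic in the sense of Ghosh–Sarnak, i.e.\ that no integral point has a coordinate equal to $0,1,2$ (the non-solvability of $u^2+v^2 = 4+12l^2$ rules out the coordinate value $2$). Genericity is what licenses the assumption $|x_0|\geq 3$ in the reduced representative, and without it the whole reduction step would fail. Similarly the conditions like $l\equiv\pm 4\bmod 9$ in case (i) and $l^2\equiv 25\bmod 32$ in case (ii) feed into genericity and the $2$-adic local computation, not into a $2$-adic reciprocity constraint ``at $l$.''
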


\begin{proof}
 Let us first check that in each of the above cases, $m=4+rl^2$ is ``generic''
as defined in \cite{GS}, i.e.
there is no integral solution  with one of the coordinates of absolute value 0, 1 or 2.
This is automatic for $m<0$, hence in cases (iii), (iv), (v).
In case i), see the proof of \cite[Proposition 8.1]{GS}.
In case  ii),  $u^2+3v^2=4(m-1)=4(3+12l^2)$ is not solvable over $\mathbb Z$ because
$$(-3, 4(3+12l^2))_3=(-3,1+4l^2)_3=(-3,5)_3=1/2.$$
By our assumption, $u^2+v^2=4+12l^2$ is not solvable over $\mathbb Z$. Since $12l^2$ is not a square, $4+12 l^2$ is generic.

Let us now   suppose that  one of the given equations has an integral solution.

In the cases i) and ii),
by the reduction theory
  (\cite[Theorem 1.1]{GS}),
 there is an integral solution $(x_0, y_0, z_0)$ satisfying $$3\leq |x_0|\leq |y_0|\leq |z_0| \text {   and } |x_0| \leq (4+rl^2)^{\frac{1}{3}}.$$

Suppose $r=2$ and $l\geq 13$, or  $r=12$ and $l\geq 37$.  We have $|x_0|+2< l$. This implies that $x_0^2-4$ has no $l$-factor. We therefore have $(x_0^2-4, r)_l=0$.

By the purely local computations in Proposition \ref{8.1-8.2}, if $r=2$, we have $(x_0^2-4, r)_2=1/2$.
Then   we have
$$(x_0^2-4, r)_p=\begin{cases} 0 & \text{   if }p\neq 2\\
1/2 & \text{   if }p= 2;
\end{cases}
$$
Similarly, by the purely local computations in Proposition \ref{8.1-8.2}, if $r=12$,  we have $$(x_0^2-4, r)_2=0 \text{ and }(x_0^2-4, r)_3=1/2. $$
Therefore
$$(x_0^2-4, r)_p=\begin{cases} 0 & \text{   if }p\neq 3\\
1/2 & \text{   if }p= 3.
\end{cases}$$ This contradicts the Hilbert reciprocity law.
\medskip

In the cases iii), iv) and v),
by the reduction theory
(\cite[Theorem 1.1]{GS}), there is an integral solution $(x_0, y_0, z_0)$ satisfying $$3\leq x_0\leq y_0\leq z_0\leq \frac{1}{2} x_0y_0 . $$

We claim $x_0<l-2$. Otherwise, we would have
\begin{align*}
-rl^2-4=&x_0y_0z_0-x_0^2-y_0^2-z_0^2\geq x_0y_0z_0-x_0^2-y_0^2-\frac{1}{2}x_0y_0z_0\\
=&\frac{1}{2}x_0y_0z_0-x_0^2-y_0^2\geq \frac{1}{2}(l-2)y_0^2-2y_0^2\\
=&\frac{1}{2}(l-6)y_0^2 \geq \frac{1}{2}(l-6)(l-2)^2.
\end{align*}
If $r=-2 \text{ and } l\geq 13$, or $r=-3 \text{ and } l\geq 17$, or $r=-12 \text{ and } l\geq 37$. This  is impossible. This implies that $x_0^2-4$ has no $l$-factor and thus
$(x_0^2-4, 2)_l=0$.

\medskip

By the purely local computations in Proposition \ref{8.1-8.2}, if $r=-2$, we have $(x_0^2-4, r)_2=1/2$.
 Then
$$(x_0^2-4, r)_p=\begin{cases} 0 & \text{   if }p\neq 2\\
1/2 & \text{   if }p= 2.
\end{cases}
$$
This contradicts  the Hilbert reciprocity law.

By the purely local computations in Proposition \ref{8.1-8.2},  if $r=-3,-12$,  one has $$(x_0^2-4, r)_2=0 \text{ and }(x_0^2-4, r)_3=1/2. $$ So
$$(x_0^2-4, r)_p=\begin{cases} 0 & \text{   if }p\neq 3\\
1/2 & \text{   if }p= 3.
\end{cases}$$ This contradicts  the Hilbert reciprocity law.
\end{proof}

The following Lemma is an extension of the previous proposition.  One needs this extension in order to get the lower bound in Theorem \ref{bound}.
\begin{lem}\label{almost} Let $r=2,-2,-3,-12$. Let $a>0$ be an integer and $l$ be a prime.
Let $m=4+ra^2l^2$. Suppose $a>0$ is
 prime to $r$ and that the Hilbert symbol $(p,r)_p=0$ for any prime divisor $p$  of $a$. In the case $r=2$,   suppose moreover
 $al\equiv \pm 4 \mod 9$.

Then there exists
 a positive constant $\theta_r>0$ only depending on $r$, such that, if $a< \theta_r l^{1/2}$ and $l$ is large enough (depending on $\theta_r$),
 then
the equation
$$ x^2+y^2+z^2-xyz= 4+ r a^2 l^2$$ has no integral solution.
\end{lem}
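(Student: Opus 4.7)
The plan is to adapt the proof of Proposition \ref{cb} to the range $a\ge 1$, with the hypothesis $a<\theta_r l^{1/2}$ playing precisely the role that ``$l$ large'' did there. First I would verify that $m=4+ra^2 l^2$ is \emph{generic} in the sense of \cite{GS} (admits no integer solution with a coordinate of absolute value $\le 2$): since $r\in\{2,-2,-3,-12\}$ is never a square in $\Bbb Z$, $d=m-4=ra^2 l^2$ is not a square, so the case $|x|=2$ is impossible; the remaining cases $|x|\in\{0,1\}$ are handled as in the genericity checks of Proposition \ref{cb}, using that $l$ is a large prime and the arithmetic conditions on the prime divisors of $a$ prevent $m$ or $m-1$ from being a norm from the relevant imaginary quadratic orders. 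With genericity in hand, the Ghosh--Sarnak reduction theory \cite[Theorem 1.1]{GS} produces a fundamental solution $(x_0,y_0,z_0)$ with $|x_0|\le|y_0|\le|z_0|$, satisfying $|x_0|\le m^{1/3}$ when $r>0$ and $z_0\le\tfrac12 x_0 y_0$ when $r<0$.

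The crux is then to show that, for $\theta_r$ small enough and $l$ large enough, $|x_0|<l-2$ so that $l\nmid x_0^2-4$. When $r>0$, $|x_0|\le (4+ra^2l^2)^{1/3}$, and $|x_0|<l-2$ is equivalent, up to lower-order terms, to $ra^2<l$, i.e.\ $a<r^{-1/2}l^{1/2}$. When $r<0$, the chain of inequalities in the proof of Proposition \ref{cb}(iii)--(v) shows that $x_0\ge l-2$ forces $|m|\ge\tfrac12(l-6)(l-2)^2$; since $|m|=|r|a^2l^2+O(1)$, this is violated whenever $a<\theta_r l^{1/2}$ for a sufficiently small $\theta_r$ and all large $l$. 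Either way, $l\nmid x_0\pm 2$, hence $l\nmid x_0^2-4$.

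Now I would apply Hilbert reciprocity to the Brauer element $B=(x^2-4,r)$, which coincides with $(x^2-4,d)$ in $\Br(U_m)$ since $d/r=a^2l^2$ is a square. The local terms $(x_0^2-4,r)_p$ are computed as follows: for $p\nmid 2ra^2l^2$, Lemma \ref{gen} gives $0$; at $p=l$, both $x_0^2-4$ and $r$ are $l$-adic units, so the symbol vanishes; at any odd prime $p\mid a$, the hypothesis $(p,r)_p=0$ together with $\gcd(a,r)=1$ forces $r\in(\Bbb Q_p^\times)^2$, whence every symbol $(\,\cdot\,,r)_p$ vanishes (observe also that the hypothesis prevents $2\mid a$ in each of our cases, so $a$ is automatically odd); at $p=\infty$, one uses $r>0$, or Lemma \ref{infty} to ensure $x_0^2-4>0$ when $r<0$. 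Finally, at $p=2$ for $r=\pm 2$ and at $p=3$ for $r=-3,\pm12$, the purely local calculations in Proposition \ref{8.1-8.2} apply verbatim with $v=al$ in place of $v=l$, producing exactly one contribution of $1/2$. The resulting sum $1/2\neq 0\in\Bbb Q/\Bbb Z$ contradicts Hilbert reciprocity.

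The main obstacle I anticipate is the fine $2$-adic analysis for $r=\pm12$: Proposition \ref{8.1-8.2} uses the hypothesis $v^2\equiv 25\pmod{32}$, and for $v=al$ this becomes a nontrivial congruence on $al$ modulo $16$. One would either embed this into the admissible choice of $l$ (restricting $l$ to a suitable arithmetic progression modulo $16$, which is compatible with $l$ being an arbitrarily large prime by Dirichlet) or absorb it into a slightly strengthened version of the hypothesis. The constant $\theta_r$ itself is then determined by the larger of the two constants appearing in the reduction-theory bounds for $|x_0|$ in the two sign regimes for $r$.
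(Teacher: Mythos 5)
Your proposal follows essentially the same route as the paper: verify genericity, invoke the Ghosh--Sarnak reduction to get a fundamental solution with $x_0 < l-2$ when $a<\theta_r l^{1/2}$, then use Hilbert reciprocity against the single nonzero local term from Proposition \ref{8.1-8.2}. The argument for the crucial step (choosing $\theta_r$ so that the reduction bound forces $l \nmid x_0^2-4$) is correct in both sign regimes, as is the observation that the hypothesis $(p,r)_p=0$ for $p\mid a$ makes $r$ a square in $\Bbb Q_p^\times$ and kills all local symbols there, and forces $a$ odd.

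One point to flag: your closing paragraph raises an ``obstacle'' for $r=\pm 12$ stemming from the condition $v^2\equiv 25\pmod{32}$ in Proposition \ref{8.1-8.2}. But the lemma's hypothesis only allows $r\in\{2,-2,-3,-12\}$; the troublesome case $r=12$ is deliberately excluded. For $r=-12$ the $2$-adic computation in Proposition \ref{8.1-8.2} needs no congruence condition on $v$ (the contradiction mod $4$ works for any $v=al$), so there is nothing to absorb or strengthen. Similarly, your genericity verification is overengineered: for $r<0$ genericity is automatic since $m<0$, and for $r=2$ it is exactly what the added hypothesis $al\equiv\pm 4\pmod 9$ provides via \cite{GS}, rather than anything about norms from imaginary quadratic orders. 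These are readability issues rather than errors, and the core argument matches the paper's.
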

\begin{proof}

   Assume there is an integral solution.

i) Suppose $r=2$.  By the last part of the proof of \cite[Proposition 8.1]{GS},  it is clear that $4+ r a^2 l^2$ is "generic". By the reduction theory (\cite[Theorem 1.1]{GS}),
 there is an integral solution $(x_0, y_0, z_0)$ satisfying $$3\leq |x_0|\leq |y_0|\leq |z_0| \text {   and } |x_0| \leq (4+2a^2l^2)^{\frac{1}{3}}.$$

If $\theta_2 <1/\sqrt{2}$, then
$$|x_0| \leq (4+2a^2l^2)^{\frac{1}{3}} <(4+2\theta_2^2l^3)^{1/3} <l-2,$$
the last inequality holds for $l$ large enough. This implies that $x_0^2-4$ has no $l$-factor. Therefore 
$(x_0^2-4, 2)_l=0$.
By  similar purely local computations as in Proposition \ref{cb},
we conclude that the integral  Brauer-Manin set of the equation
$$ x^2+y^2+z^2-xyz= 4+ r a^2 l^2$$ is empty, hence this equation
has no integral solution.

ii) Suppose $r=-2,-3,-12$.
 By the reduction theory
(\cite[Theorem 1.1]{GS}), there is an integral solution $(x_0, y_0, z_0)$ satisfying $$3\leq x_0\leq y_0\leq z_0\leq x_0y_0/2. $$

We have
\begin{align*}
-ra^2l^2-4=&x_0y_0z_0-x_0^2-y_0^2-z_0^2 \geq x_0y_0z_0/2-x_0^2-y_0^2\\
\geq & (x_0 /2-1)y_0^2     -x_0^2 \geq x_0\cdot x_0^2/2-x_0^2-x_0^2= x_0^3/2-2x_0^2.
\end{align*}
If we choose $0<\theta_r< 1/\sqrt{-2r}$, then $x_0<l-2$ for $l$ large enough. Therefore  
 $(x_0^2-4, r)_l=0$. By    purely local computations as in Proposition \ref{cb},
we conclude that the integral  Brauer-Manin set of the equation
$$ x^2+y^2+z^2-xyz= 4+ r a^2 l^2$$ is empty, hence this equation
has no integral solution.
\end{proof}

The following result improves
  upon
 the lower bound $\sqrt{N}(\log N)^{-1}$ in \cite[Theorem 1.5]{LM}.
\begin{thm}\label{bound}
Let  $\mathcal U_m$ be the affine scheme over $\Bbb Z$ defined by the equation
$$ x^2+y^2+z^2-xyz= m. $$
We have
\begin{align*}
 &\#\{m\in \mathbb Z: 0<m<N, \ \mathcal U_m(A_\mathbb Z)^\Br\neq \emptyset \ \text{ but } \ \mathcal U_m(\mathbb Z)=\emptyset\}\gg  \sqrt{N}(\log N)^{-1/2};\\
 &\#\{m\in \mathbb Z: -N<m<0, \ \mathcal U_m(A_\mathbb Z)^\Br\neq \emptyset \ \text{ but } \ \mathcal U_m(\mathbb Z)=\emptyset\}\gg  \sqrt{N}(\log N)^{-1/2}
 \end{align*}
 as $N \to +\infty$.
\end{thm}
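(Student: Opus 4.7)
The plan is to construct, for each integer $N$, sets of positive (resp.\ negative) integers $m$ with $|m|<N$ for which $\mathcal{U}_m(A_{\mathbb{Z}})^{\Br}\ne\emptyset$ but $\mathcal{U}_m(\mathbb{Z})=\emptyset$, of cardinality $\gg\sqrt{N}(\log N)^{-1/2}$. These $m$ will all be of the form $m=4+ra^2l^2$ with $r=2$ in the positive case and $r=-2$ in the negative case, $l$ a large prime, and $a$ a positive integer belonging to a suitable multiplicative family. The gain of the factor $(\log N)^{1/2}$ over \cite[Theorem 1.5]{LM} comes from letting $a$ range rather than fixing $a=1$.

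For the positive case, let $\mathcal{A}$ be the set of squarefree positive integers $a$ coprime to $6$ all of whose prime factors $p$ satisfy $p\equiv\pm 1\pmod 8$, equivalently $(p,2)_p=0$. For each $a\in\mathcal{A}$ and each prime $l$ satisfying $al\equiv\pm 4\pmod 9$, $a<\theta_2 l^{1/2}$, and $l$ larger than the absolute threshold supplied by Lemma \ref{almost}, the integer $m=4+2a^2l^2$ has no integral solution by that lemma. Since $d=2a^2l^2$ is not a square and $-d<0$ is not a square in $\mathbb{Q}$, Corollary \ref{br-Q} gives $\Br(U_m)=\Br_1(U_m)$; by Theorem \ref{br} this group is generated modulo constants by the three classes $\mathcal{B}_1=(x-2,d)$, $\mathcal{B}_2=(y-2,d)$, $\mathcal{B}_3=(z-2,d)$. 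To see $\mathcal{U}_m(A_{\mathbb{Z}})^{\Br}\ne\emptyset$, we build an adelic point orthogonal to $\mathcal{B}_1,\mathcal{B}_2,\mathcal{B}_3$ along the lines of the proof of Proposition \ref{non-empty}: at primes $p\nmid 2ld$ invoke Lemma \ref{gen}; at primes $p\mid a$, where $\mathrm{ord}_p(d)=2$ is even and positive, use Lemma \ref{vanish} to supply a local point at which all three $\mathcal{B}_i$ vanish; at $p=2$ and $p=l$ take the explicit local points constructed in the proof of Proposition \ref{non-empty} (the hypothesis $(p,2)_p=0$ for $p\mid a$ makes the same Hensel construction go through without interference); at $p=3$ the condition $al\equiv\pm 4\pmod 9$ secures local solubility via \cite[Proposition 6.1]{GS}. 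The invariant sum then has exactly the same cancellation pattern as in Proposition \ref{non-empty}: the only nonzero contributions are at $p=2$ and $p=l$ and they cancel.

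For the count, for each $a\in\mathcal{A}$ with $a\le N^{1/4}$ the Siegel--Walfisz theorem yields
$$\#\bigl\{l\text{ prime}:\ a^2/\theta_2^2<l\le\sqrt{(N-4)/2}/a,\ al\equiv\pm 4\pmod 9\bigr\}\gg\frac{\sqrt{N}}{a\log N}.$$
Distinct pairs $(a,l)$ with $a\le N^{1/4}<l$ produce distinct $m$, since $l$ is then the largest prime divisor of $(m-4)/2$. Summing,
$$\sum_{\substack{a\in\mathcal{A}\\ a\le N^{1/4}}}\frac{\sqrt{N}}{a\log N}\;\gg\;\frac{\sqrt{N}}{\log N}\sum_{\substack{a\in\mathcal{A}\\ a\le N^{1/4}}}\frac{1}{a}.$$
Since the primes $p\equiv\pm 1\pmod 8$ contribute $\sum_{p\in P,\,p\le x}1/p=\tfrac12\log\log x+O(1)$, the Wirsing (equivalently Landau--Selberg--Delange) theorem on multiplicative functions supported on a set of primes of Dirichlet density $1/2$ yields $\sum_{a\in\mathcal{A},\,a\le A}1/a\asymp(\log A)^{1/2}$. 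Taking $A=N^{1/4}$ produces the claimed bound $\gg\sqrt{N}(\log N)^{-1/2}$.

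For negative $m$ the plan is identical with $r=-2$ in place of $r=2$: Lemma \ref{almost}, Lemma \ref{gen}, Lemma \ref{vanish}, and the local $2$-adic computation from the proof of Proposition \ref{8.1-8.2} all apply verbatim, and the multiplicative family of $a$'s (those squarefree $a$ coprime to $6$ whose prime factors $p$ satisfy $(p,-2)_p=0$, i.e.\ $p\equiv 1,3\pmod 8$) again has Dirichlet density $1/2$, so the same counting argument works. The main obstacle is the Brauer-Manin verification for general $a\in\mathcal{A}$ with many prime factors rather than only $a=1$ as in Proposition \ref{cb}; this is precisely what the Hilbert-symbol hypothesis on primes dividing $a$, combined with Lemma \ref{vanish}, is designed to overcome.
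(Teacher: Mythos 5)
Your overall strategy matches the paper's: take $m = 4 \pm 2a^2l^2$, apply Lemma \ref{almost} for $\mathcal U_m(\mathbb Z)=\emptyset$, and invoke the construction of Proposition \ref{non-empty} for $\mathcal U_m(A_{\mathbb Z})^{\Br}\neq\emptyset$; you then count by fixing $a$ and letting $l$ vary (the paper does the reverse), and use Wirsing/Landau--Selberg--Delange where the paper cites \cite[\S 5.8]{LM}. These choices are equivalent and fine.

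There is, however, a genuine gap: you impose no congruence condition on $l$ modulo $8$. The paper takes $l\equiv 19 \pmod{72}$, which in particular forces $l\equiv 3 \pmod 8$, i.e.\ $(\tfrac{2}{l})=-1$. This is not cosmetic: the $l$-adic half of the construction in the proof of Proposition \ref{non-empty} explicitly uses $(\tfrac{2}{l})=-1$ to produce a point with $l$-adic invariant $(0,0,1/2)$, which then cancels the $2$-adic invariant $(0,0,1/2)$. If instead $l\equiv\pm 1\pmod 8$, then $2$ is a square in $\mathbb Z_l^\times$ and so $(f,2)_l=0$ for \emph{every} $f\in\mathbb Q_l^\times$; since $d=2a^2l^2\in 2\mathbb Q^{\times 2}$, all three generators $\mathcal B_i=(\cdot-2,d)$ evaluate to $0$ at every $\mathbb Z_l$-point. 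The same vanishing holds at every $p\mid a$ (because $(p,2)_p=0$), at $\infty$ (because $d>0$), and at all $p\nmid 2d$ (Lemma \ref{gen}). Meanwhile, by the relation $\mathcal B_1+\mathcal B_2+\mathcal B_3=(x^2-4,d)$ of (\ref{relation-br}) and the computation in Proposition \ref{8.1-8.2} (one coordinate of any $\mathbb Z_2$-point is a $2$-adic unit, whence $(x^2-4,2)_2=1/2$ when $m\equiv 6\pmod 8$), the $2$-adic evaluation of $\mathcal B_1+\mathcal B_2+\mathcal B_3$ is always $1/2$. Therefore, for $l\equiv\pm 1\pmod 8$ the total invariant of $\mathcal B_1+\mathcal B_2+\mathcal B_3$ is $1/2\neq 0$ and $\mathcal U_m(A_{\mathbb Z})^{\Br}$ is \emph{empty}, the opposite of what you claim. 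You must add $l\equiv\pm 3\pmod 8$ (and the analogous condition $(\tfrac{-2}{l})=-1$, i.e.\ $l\equiv 5,7\pmod 8$, in the negative case). This extra congruence does not change the order of the count.

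Two smaller points. First, your constraint $a\le N^{1/4}$ is too generous: the interval $(a^2/\theta_2^2,\ \sqrt{(N-4)/2}/a\,]$ is non-empty only when $a\lesssim N^{1/6}$, so the displayed lower bound $\gg\sqrt N/(a\log N)$ on the number of admissible $l$ is false for $N^{1/6}\ll a\le N^{1/4}$; this is easily fixed by truncating at $a\le cN^{1/6}$ and does not affect the final order $\sqrt N(\log N)^{-1/2}$, since $(\log N^{1/6})^{1/2}\asymp(\log N)^{1/2}$. Second, your parenthetical remark that ``$(p,2)_p=0$ for $p\mid a$ makes the same Hensel construction go through'' at $p=2,l$ is misplaced: that hypothesis plays no role in the local constructions at $p=2$ and $p=l$; it is used in Lemma \ref{almost} (the Hilbert reciprocity argument proving $\mathcal U_m(\mathbb Z)=\emptyset$).
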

\begin{proof}
a)  To prove the first asymptotic inequality, we restrict attention to positive integers   $m=4+2a^2l^2$  
with  $l$   a prime, $l \equiv 19  \mod 72$ and   $a$  an odd positive integer satisfying
$$ (*):  \ \   a \equiv \pm 4 \mod 9 \  \text{ and all prime divisors of $a$ are congruent to $\pm 1 \mod 8$ }.$$
Fix $\theta_2 < 1/\sqrt{2}$ as in the proof of Lemma  \ref{almost}.
By this lemma, 
if $a< \theta_2 l^{1/2}$
and $l$ is large enough, then
the equation
$$ x^2+y^2+z^2-xyz= 4+ 2 a^2 l^2$$ has no integral solution.
  By Proposition \ref{non-empty}, we have $\mathcal U_m(A_\mathbb Z)^\Br\neq \emptyset$ for the above values of $m$.

Let $$N_B = \#\{m\in \mathbb Z: 0<m<N, \ \mathcal U_m(A_\mathbb Z)^\Br\neq \emptyset \ \text{ but } \ \mathcal U_m(\mathbb Z)=\emptyset\} . $$ By Lemma \ref{almost},
one obtains
\begin{align*}
N_B \geq &\sum_{l<\sqrt{N},  \ l\equiv 19\mod 72 }\#\{a:a<\theta_2\sqrt{l}, a<\sqrt{N}/l, a \text { satisfies } (*)\}\\
\geq &\sum_{\theta_2^{-2/3}N^{1/3}< l<N^{1/2} , \ l\equiv 19\mod 72 }\#\{a:a<\sqrt{N}/l, a \text { satisfies } (*)\}\\
\geq &\sum_{\theta_2^{-2/3}N^{1/3}< l<N^{5/12} , \ l\equiv 19\mod 72 }\#\{a:a<\sqrt{N}/l, a \text { satisfies } (*)\}\\
\end{align*} 
By a well known lemma (e.g., \cite[\S 5.8]{LM}), 
one has
 $$\#\{a<N: a \text { satisfies } (*)\} \thicksim c N (\log N)^{-1/2} \ \ \ \ \ \ \ \text{as $N\to +\infty$} $$ where $c>0$ is a constant.
Using  \cite[p.156, Ex. 6]{A}, we obtain
 \begin{align*}
N_B\gg &\sum_{\theta_2^{-2/3}N^{1/3}< l<N^{5/12}, \ l\equiv 19 \mod 72  } \sqrt{N}(\log \sqrt{N}-\log l)^{-1/2}l^{-1}\\
\geq &\sqrt{N}(\log N)^{-1/2}  \sum_{\theta_2^{-2/3} N^{1/3}< l<N^{5/12},  \ l\equiv 19 \mod 72  }  l^{-1}\\
\gg  &\sqrt{N}(\log N)^{-1/2}(\log\log (N^{5/12})-\log\log (N^{1/3})-\log(1-\frac{2\log(\theta_2)}{\log N})+O((\log N)^{-1}))\\
=  &\sqrt{N}(\log N)^{-1/2}(\log(5/4)+O((\log N)^{-1})) \gg  \sqrt{N}(\log N)^{-1/2}
\end{align*}
as $N\to +\infty$

b) To prove the second asymptotic inequality, we now restrict attention to integers  $m=4-2a^2l^2$  
and apply Lemma \ref{almost} to the case $r=-2$. Since $\sqrt{-1}\not \in \mathbb Q(\sqrt{d})=\mathbb Q(\sqrt{-2}),$  Corollary \ref{br-Q} gives $\Br(U_m)= \Br_1(U_m)$. The result follows from an argument entirely analogous to the previous one.
\end{proof}

\section{Strong approximation always fails} \label{fsa}

Let $\mathcal U_m$ be the scheme  over $\Bbb Z$ defined by the equation
\begin{equation}\label{equ-m}   x^2+y^2+z^2-xyz= m.  \end{equation}
The following proposition complements \cite[Theorem 1.1 (i)]{GS}
 (see also the discussion below \cite[Lemma 2.1]{GS}), which goes back to
 Markoff, Hurwitz, Mordell.  Theorem 1.1(i) of  \cite{GS} contains the further information
 that if $m\in \Bbb Z$ is ``generic'', i.e. there no point on $U_{m}(\Bbb Z)$ with
 $x=0,1,2$,
  then $\Gamma$ acts transitively on the solutions
 and it describes an explicit fundamental set for the set
 of integral solutions.
 
\begin{prop} \label{induction}  If $m>0$, then any integral point in $\mathcal U_m(\mathbb Z)$ is  $\Gamma$-equivalent to an integral point $(x_0,y_0,z_0)\in \mathcal U_m(\mathbb Z)$ such that
\begin{equation}\label{orbit}
3\leq x_0 \leq y_0 \leq -z_0 \ \ \ \text{ or } \ \ \ x_0=0,1,2.
\end{equation}
\end{prop}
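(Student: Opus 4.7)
The plan is to apply the classical Markoff--Hurwitz descent. Starting from any $(x,y,z) \in \mathcal U_m(\Bbb Z)$, I would choose in its $\Gamma$-orbit a representative $(x_0,y_0,z_0)$ minimizing the sum $|x_0|+|y_0|+|z_0|$, and use a permutation to arrange $|x_0| \leq |y_0| \leq |z_0|$. The Vieta involution on the third coordinate replaces $z_0$ by the other integer root $z_0' = x_0 y_0 - z_0$ of $z^2 - x_0 y_0 z + (x_0^2+y_0^2-m)=0$. Minimality forces $|z_0'| \geq |z_0|$; an elementary check shows this is automatic when $x_0 y_0$ and $z_0$ have opposite signs, and is equivalent to $|x_0 y_0| \geq 2 |z_0|$ when they have the same nonzero sign.

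The sign of $x_0 y_0 z_0$ is invariant under permutations and under the pair-sign-changes generating the sign subgroup of $\Gamma$, so I would split into two cases. In Case A, $x_0 y_0 z_0 \geq 0$: after pair-sign-changes, $0 \leq x_0 \leq y_0 \leq z_0$. If $x_0 = 0$ the representative is already of the form $x_0 \in \{0,1,2\}$. Otherwise $x_0 y_0 > 0$ and the Vieta constraint reads $y_0 \leq z_0 \leq x_0 y_0/2$. The key bound: the quantity
$$m(z_0) = x_0^2 + y_0^2 + z_0^2 - x_0 y_0 z_0,$$
viewed as a function of $z_0$, is a convex quadratic with minimum at $z_0 = x_0 y_0/2$, so on the interval $[y_0,\,x_0 y_0/2]$ its maximum is attained at the left endpoint $z_0 = y_0$ and equals $x_0^2 + y_0^2(2-x_0)$. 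If $x_0 \geq 3$ this is at most $x_0^2 - y_0^2 \leq 0$ (using $y_0 \geq x_0$), contradicting the hypothesis $m > 0$. Hence in Case A we must have $x_0 \in \{0,1,2\}$.

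In Case B, $x_0 y_0 z_0 < 0$, there is, after a pair-sign-change if necessary, exactly one negative coordinate; an appropriate pair-sign-change then moves it into the last slot, yielding $0 \leq x_0 \leq y_0 \leq -z_0$. If $x_0 \geq 3$ this is the first alternative in the conclusion; if $x_0 \leq 2$ it matches the second. The main step requiring care is the convex-quadratic bound in Case A, which is what forces the small-coordinate alternative once $m>0$ excludes the all-positive reduced solutions; the remainder is sign and permutation bookkeeping.
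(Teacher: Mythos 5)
Your argument is correct and is essentially the same Markoff--Hurwitz descent used in the paper, only repackaged via an extremal principle (a representative minimizing $|x_0|+|y_0|+|z_0|$) in place of the explicit reduction algorithm that the paper runs and shows terminates because the coordinate sum strictly decreases. Your convexity derivation of the bound $m\le x_0^2+(2-x_0)y_0^2$ at the endpoint $z_0=y_0$ is exactly the inequality $(x_0-2)y_0^2\le x_0^2-m$ the paper obtains by solving the Markoff equation for $z$ and squaring $xy-2z\le xy-2y$.
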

\begin{proof}
For a given integral point, if its $\Gamma$-orbit contains an integral point with the coordinate $x=0,1,2$, then the proof is completed. Therefore,
we may assume there is no integral point in the $\Gamma$-orbit  with $x=0,1,2$. 
By changing sign of two coordinates and  permutation of  the coordinates, 
one only needs to consider the generic case, i.e. 
$\Gamma$-orbits of integral points such  that  for any   point  $(x,y,z)$ in the orbit we have $$\min \{|x|, |y|, |z|\} \geq 3 . $$

By changing sign of two coordinates simultaneously, we only need to consider the following two cases:  two coordinates of $(x, y, z)$ are positive and the remaining one is negative; or all coordinates  of  $(x, y, z)$ are positive.

Suppose that there is an integral point $(x, y, z) \in \mathcal U_m(\mathbb Z)$ such that two coordinates of $(x, y, z)$ are positive and the remaining one is negative.
Then the result follows from changing sign of two coordinates so that all of them are negative, permutation of the coordinates
so as to get $ |x|  \leq  |y| \leq  |z|$
and then change of sign of $x$ and $y$.

Now we consider an integral point $(x, y, z)\in \mathcal U_m(\mathbb Z)$ such that $3\leq x\leq y\leq z$.

 If $z\leq \frac{1}{2} xy$, then one obtains
$$ z= \frac{1}{2} (xy -\sqrt{x^2y^2-4(x^2+y^2-m)}) $$ by solving (\ref{markoff1}) for $z$. This implies  
$$ \sqrt{x^2y^2-4(x^2+y^2-m)} =xy-2z \leq xy-2y .$$ Therefore one has
$$ (x-2)y^2 \leq x^2 -m $$ by squaring. From $x\geq 3$ and $m>0$ one concludes $y^2< x^2$. A contradiction is derived.

For any integral point $(x, y, z)\in \mathcal U_m(\mathbb Z)$ with $3\leq x\leq y\leq z$,
we thus have  $z>\frac{1}{2}xy$.
Applying the Vieta involution, one obtains a new integral point $(x, y, xy-z)$
which satisfies $xy-z< z$.
If $xy-z\leq 2$, since we are in the generic case we must have $xy-z \leq -3$, so we have a situation with two coordinates positive and one negative, and we conclude as
above.
Suppose $xy-z\geq 3$. We obtain a new integral point $(x_1, y_1, z_1)$ in the $\Gamma$-orbit of $(x, y, z)$ with positive coordinates
and $x_{1}+y_{1}+z_{1} < x+y+z$. This process must stop, that is we reach a situation  with two coordinates positive and one negative.
\end{proof}

The main result of this section is the following theorem.

\begin{thm} \label{wsa}  Let $m$ be any integer.
Suppose $\mathcal U_m (A_\mathbb Z)\neq \emptyset$. For any finite set $S$
of primes,
 the image of the natural map $\mathcal U_m(\mathbb Z) \to \prod_{p \notin S} \mathcal U_m (\mathbb Z_p)$  is not dense.\end{thm}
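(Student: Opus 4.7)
My approach is to combine the Markoff--Hurwitz--Mordell reduction theory (Proposition \ref{induction}) with an obstruction at finitely many auxiliary primes outside $S$. The central point is that every integer point is a $\Gamma$-translate of a point in a controlled fundamental set $F$, and $\Gamma$ acts on each $\mathcal{U}_m(\mathbb{Z}/N)$ through a finite quotient via the integral polynomial formulas for its generators.

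First I would reduce to the case $\mathcal{U}_m(\mathbb{Z})\ne\emptyset$ (the statement is vacuous otherwise). By Proposition \ref{induction} for $m>0$ and the analogous reduction for $m\le 0$ (where, by Lemma \ref{infty}, all coordinates satisfy $|x|,|y|,|z|>2$ and a similar Vieta-shrinking argument applies), every $P\in\mathcal{U}_m(\mathbb{Z})$ can be written as $P=w\cdot P_0$ with $w\in\Gamma$ and $P_0$ in an explicit set $F$: either $F$ is finite (the generic case), or $F$ is the union of integer points on finitely many integral conics corresponding to one coordinate in $\{0,\pm 1,\pm 2\}$. Because the generators of $\Gamma$ are polynomials with integer coefficients, $\Gamma$ acts continuously on $\mathcal{U}_m(\mathbb{Z}_p)$ and commutes with reduction modulo $p^N$ for every prime $p$ and $N\ge 1$.

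To prove non-density, it suffices to find finitely many primes $p_1,\dots,p_k\notin S$, integers $N_1,\dots,N_k\ge 1$, and a residue class in $\prod_{i=1}^k \mathcal{U}_m(\mathbb{Z}/p_i^{N_i})$ not hit by any integer point. Since every integer point lies in $\Gamma\cdot F$, it is enough to exhibit a residue class outside
\[
\bigcup_{P_0\in F}\,\Gamma\cdot\bigl(P_0\bmod \textstyle\prod p_i^{N_i}\bigr),
\]
a \emph{finite} set on which $\Gamma$ acts through a finite quotient of itself. Such a missing class then produces a nonempty open ball in the product disjoint from the image of $\mathcal{U}_m(\mathbb{Z})$.

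The main obstacle is constructing the missing residue class. The single-prime case would require showing non-transitivity of $\Gamma$ on $\mathcal{U}_m(\mathbb{F}_p)$, which is the Bourgain--Gamburd--Sarnak problem and not available unconditionally; so I would work with at least two primes $p,q\notin S$ and exploit that the joint action on $\mathcal{U}_m(\mathbb{F}_p)\times\mathcal{U}_m(\mathbb{F}_q)$ is \emph{diagonal}: the same word $w\in\Gamma$ must act at both primes. Together with the relations in $\Gamma$ (essentially a quotient of $\mathbb{Z}/2*\mathbb{Z}/2*\mathbb{Z}/2$ by $S_3$), this constrains the image of $\Gamma$ in $\operatorname{Sym}(\mathcal{U}_m(\mathbb{F}_p))\times\operatorname{Sym}(\mathcal{U}_m(\mathbb{F}_q))$ to a proper subgroup for suitable $p,q$. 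Choosing a target pair $(Q_p,Q_q)$ in a coset of this subgroup not represented by any $(w\cdot P_0\bmod p,\, w\cdot P_0\bmod q)$ with $P_0\in F$ and $w\in\Gamma$ will yield the obstruction; the bookkeeping is done by comparing $|\mathcal{U}_m(\mathbb{F}_p)|\cdot|\mathcal{U}_m(\mathbb{F}_q)|\sim (pq)^2$ with the size of a single diagonal orbit, enlarged by $|F|$, which grows strictly more slowly once $p,q$ are sufficiently large.
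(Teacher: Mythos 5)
Your high-level strategy matches the paper's: both rely on the Markoff--Hurwitz--Mordell reduction theory (Proposition~\ref{induction} for $m>0$, and \cite[Theorem 1.1(ii), Lemma 2.2]{GS} for $m<0$) together with a local obstruction at auxiliary primes outside $S$. The genuine gap is in the mechanism for producing the obstruction. You propose to show that $\Gamma$ acting diagonally on $\mathcal U_m(\mathbb F_p)\times\mathcal U_m(\mathbb F_q)$ fails to reach some residue pair, and your evidence is a size comparison between $(pq)^2$ and ``the size of a single diagonal orbit.'' This is not substantiated, and in light of the Bourgain--Gamburd--Sarnak superstrong-approximation results on Markoff triples (which show $\Gamma$ is typically nearly transitive on $\mathcal U_m(\mathbb F_p)$, with expansion), the diagonal orbit on a product of two generic factors can be expected to be very large, possibly comparable to $(pq)^2$. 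Turning ``the joint action is a proper subgroup for suitable $p,q$'' into a theorem is essentially the non-independence problem you yourself identify as unavailable; the counting heuristic does not supply it.

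What the paper actually does sidesteps all of this. Rather than hunting for a residue class \emph{outside} $\Gamma\cdot(F\bmod N)$, it constructs a nonempty open subset $\mathcal V_{\epsilon,d'}$ of $\prod_{p\mid d'}\mathcal U_m(\mathbb Z_p)$ (where $d'=a^2R^2-m$ for a suitable $a$, $R$) consisting of points congruent to a permutation-and-sign orbit of $(\pm aR,0,0)$ modulo the appropriate $p$-powers. The crucial observation is that this set is \emph{$\Gamma$-invariant}: sign changes and coordinate permutations visibly preserve it, and the Vieta involution $(x,y,z)\mapsto(yz-x,y,z)$ sends $(aR,0,0)$ to $(-aR,0,0)$ modulo $d'$, hence stays inside. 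Because of this invariance, if an integer point $P$ lay in $\mathcal V_{\epsilon,d'}$ then so would a reduced representative $(x_0,y_0,z_0)$ of its $\Gamma$-orbit from Proposition~\ref{induction}; the congruence conditions then force $x_0\ge aR>\sqrt{|m|+9}$ (or $x_0=0$ with $y_0^2+z_0^2=m$, also impossible once $aR>\sqrt{m}$), which violates the fundamental-domain bound $x_0\le (m-27)^{1/3}$ (resp.\ $x_0\le\sqrt{|m|+9}$ for $m<0$). So the obstruction is produced by a single explicit modulus and a $\Gamma$-invariance argument, with no orbit counting at all. That $\Gamma$-invariant choice of target residues, combined with an inequality against the fundamental-domain bound, is the essential idea your proposal is missing.
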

\begin{proof}
For any sets of primes $S_1\supset S_2$, if $\mathcal{U}_m(\Bbb Z)$ is not dense in $\prod_{p\not \in S_1} \mathcal{U}_m(\Bbb Z_p)$, then $\mathcal{U}_m(\Bbb Z)$ is not dense in $\prod_{p\not \in S_2} \mathcal{U}_m(\Bbb Z_p)$. One can thus  enlarge $S$ if necessary.

i)  Suppose $m\neq 0$. We may assume $S$ contains $2$ and $\infty$. Let $S'=\{p\text{ prime}: p\mid m\}$ and $R= \prod_{p\in S\setminus S'} p$.
Let $a$ be a positive integer prime to $m$ such that
\begin{equation}\label{inequality-a}
a^2R^2-2aR-m\geq 0 \text{ and }aR>\sqrt{|m|+9}.
\end{equation}
Let $d'=a^2R^2-m$ and $e'_p=ord_p(d')$.

Denote
 $$\mathcal V_{\epsilon,1,d'}:= \prod_{p\mid d'}\{(x_p,y_p,z_p)\in \mathcal U_m(\mathbb Z_p): (x_p,y_p,z_p) \equiv (\epsilon aR ,0,0) \mod p^{e'_p} \},$$
 $$\mathcal V_{\epsilon,2,d'}:= \prod_{p\mid d'}\{(x_p,y_p,z_p)\in \mathcal U_m(\mathbb Z_p): (x_p,y_p,z_p) \equiv (0,\epsilon aR,0) \mod p^{e'_p} \},$$
 $$\mathcal V_{\epsilon,3,d'}:= \prod_{p\mid d'}\{(x_p,y_p,z_p)\in \mathcal U_m(\mathbb Z_p): (x_p,y_p,z_p) \equiv (0,0,\epsilon aR ) \mod p^{e'_p} \},$$
 where $\epsilon= \pm 1.$  Let $$\mathcal V_{\epsilon,d'}=\bigcup_{i=1}^3\bigcup_{\epsilon=\pm 1}\mathcal V_{\epsilon,i,d'}.$$
   It is clear that $\mathcal V_{\epsilon, d'}$ is $\Gamma$-invariant, where $\Gamma$ is the group defined in \S 1.
   Since $d'$ has no prime factor in $S\cup S'$,
we can take the local point $(x'_p,0,0)$ of $\mathcal U_m(\mathbb Z_p)$ with $x'_p\equiv a R \mod p^{e'_p}$ for any $p\mid d'$ by Hensel's lemma.
  Obviously, $\prod_{p\mid d'}(x'_p,0,0)\in \mathcal V_{1,1,d'}$.
   Therefore $\mathcal V_{\epsilon, d'}$ is a non-empty open subset of $\prod_{p\mid d'} \mathcal U_m(\mathbb Z_p)$.

a) Suppose $m>0$.
 Assume that $\mathcal U_m(\Bbb Z)$ is dense in $\prod_{p \notin S} \mathcal U_m (\mathbb Z_p)$. Then $\mathcal U_m(\mathbb Z)\cap \mathcal V_{\epsilon, d'} \neq \emptyset$.
 By Proposition \ref{induction},
there is an integral point $(x_0,y_0,z_0)\in \mathcal U_m(\mathbb Z)\cap\mathcal V_{\epsilon,d'}$ such that  \begin{equation} \label{pos-red} 3\leq x_0 \leq y_0 \leq -z_0  \ \ \ \text{ or } \ \ \ x_0=0,1,2. \end{equation}
Since $(x_0,y_0,z_0)\in \mathcal V_{\epsilon,d'}$, we have
$$(x_0,y_0,z_0) \equiv (\pm aR,0,0), (0,\pm aR,0) \text{ or } (0,0,\pm aR )\ \ \ \mod d' .$$
If $x_0> 0$, then
\begin{equation} \label{inequality}
x_0\geq min\{d',d'-aR,aR\}=aR>\sqrt{m+9} >3
\end{equation}
by (\ref{inequality-a}).
Hence $3 \leq x_0 \leq (m-27)^{1/3}$ by (\ref{equ-m}) and (\ref{pos-red}).
We have $\sqrt{m+9}>(m-27)^{1/3}$.  By  (\ref{inequality})
 a contradiction is derived.
Therefore
$$x_0=0, y_0^2+z_0^2=m \text{ and } (y_0,z_0)\equiv (\pm aR,0) \text{ or } (0,\pm aR) \mod d',$$ which is impossible by (\ref{inequality-a}). Therefore $\mathcal U_m(\mathbb Z)$ is not dense in
$\prod_ {p\mid d'} \mathcal U_m (\mathbb Z_p)$, hence is not dense in
$\prod_{p \notin S} \mathcal U_m (\mathbb Z_p)$.

b) Suppose $m<0$. Assume that $\mathcal U_m(\Bbb Z)$ is dense in $\prod_{p \notin S} \mathcal U_m (\mathbb Z_p)$. Then $\mathcal U_m(\mathbb Z)\cap \mathcal V_{\epsilon, d'} \neq \emptyset$.
By \cite[Theorem 1.1 (ii)]{GS},
there is an integral point $(x_0,y_0,z_0)\in \mathcal U_m(\mathbb Z)\cap\mathcal V_{\epsilon,d'}$ such that $$3\leq x_0 \leq y_0 \leq z_0 \leq x_0y_0/2.$$
By \cite[Lemma 2.2]{GS},  one has $3\leq x_0\leq \sqrt{|m|+9}$.
Since $(x_0,y_0,z_0)\in \mathcal V_{\epsilon,d'}$, we have
$$(x_0,y_0,z_0) \equiv (\pm aR,0,0), (0,\pm aR,0) \text{ or } (0,0,\pm aR )\ \ \ \mod d',$$
Since $x_0> 0$, then
\begin{equation*}
x_0\geq min\{d',d'-aR,aR\}=aR>\sqrt{m+9}
\end{equation*}
by (\ref{inequality-a}), which contradicts $x_0\leq\sqrt{|m|+9}$. 
Therefore $\mathcal U_m(\mathbb Z)$ is not dense in
$\prod_ {p\mid d'} \mathcal U_m (\mathbb Z_p)$, hence is not dense in
$\prod_{p \notin S} \mathcal U_m (\mathbb Z_p)$.

ii) Suppose $m=0$.

We can choose a prime $l\notin S$ and $l\equiv 1 \mod 4$. Then we may take $\delta\in \mathbb Z_l^\times$ such that $\delta^2=-1$. Therefore $(\delta l,l,0)\in \mathcal U_0(\mathbb Z_l)$. If  $\mathcal U_0(\Bbb Z)$ is dense in $\prod_{p \notin S} \mathcal U_0 (\mathbb Z_p)$, then there is an integral point $(x_0,y_0,z_0)\equiv (\delta l,l,0) \mod l^2$. Therefore $(x_0,y_0,z_0)\neq (0,0,0)$ and $x_0,y_0,z_0$ are all divisible by $l$. Since $\mathcal U_0(\Bbb Z)$  has just two orbits $(0,0,0)$ and $(3,3,3)$ (see \cite[\S 3.1]{GS}), $(x_0,y_0,z_0)$ is contained in the orbit $(3,3,3)$. One has $l\mid 3$ since $x_0,y_0,z_0$ are all divisible by $l$, which is impossible.  Therefore $\mathcal U_0(\mathbb Z)$ is not dense in $\prod_{p \notin S} \mathcal U_0 (\mathbb Z_p)$. The proof is completed. 
\end{proof}

We can ask for
 a lighter version of strong approximation: could it be that  the  reduction map $\mathcal U_m(\mathbb Z) \to \mathcal U_m (\mathbb Z/l)$ is surjective for almost all primes $l$? For $m$ not a square, the following proposition gives a conditional negative answer. Indeed it is a special case of Schinzel's conjecture that
under this hypothesis  on $m$ the polynomial  $x^2-m \in  {\mathbb Z}[x] $  represents infinitely many primes as $x$ varies in $\mathbb Z$.
	 	
	\begin{prop} Assume that $m$ is not a square and that     the polynomial  $x^2-m \in  {\mathbb Z}[x] $  represents infinite many primes. Then  there exist infinitely many primes $l$ for which there is a point in $\mathcal U_m(\mathbb Z/l)$ of the shape $(\overline x,0,0)$ with $\overline x\neq 0$ which is not in the image of $\mathcal U_m(\mathbb Z) \to \mathcal U_m(\mathbb Z/l)$.
\end{prop}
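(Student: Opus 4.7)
The plan is to use the primes supplied by the hypothesis directly. Let $x_0 \in \mathbb{Z}_{>0}$ range over the infinite set of integers for which $l := x_0^2 - m$ is prime. Since $l$ determines $x_0$ up to sign, we obtain infinitely many distinct primes $l$; after discarding the finitely many $l$ dividing $m$, the point $(\overline{x_0}, 0, 0) \in \mathcal U_m(\mathbb Z/l)$ lies on the reduction and satisfies $\overline{x_0} \not\equiv 0 \pmod l$. Moreover, because $x_0 = \sqrt{l+m}$ lies strictly between $3$ and $l-3$ for $l$ large, one has $\overline{x_0} \not\equiv 0,\pm 1,\pm 2 \pmod l$ for all but finitely many such $l$. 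The claim to prove is that for any such $l$ large enough (depending only on $m$), the point $(\overline{x_0}, 0, 0)$ does not lift to $\mathcal U_m(\mathbb Z)$.

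Suppose for contradiction that some $P=(X,Y,Z)\in \mathcal U_m(\mathbb Z)$ reduces to $(\overline{x_0},0,0)$. A routine check using the three Vieta involutions, the sign changes and the coordinate permutations shows that the $\Gamma$-orbit of $(\overline{x_0},0,0)$ inside $\mathcal U_m(\mathbb Z/l)$ is exactly
\[ \Sigma \;:=\; \{(\pm\overline{x_0},0,0),\ (0,\pm\overline{x_0},0),\ (0,0,\pm\overline{x_0})\}, \]
so every element of $\Sigma$ has two coordinates equal to zero and its remaining coordinate in $\{\pm\overline{x_0}\}$. Apply the reduction theory to $P$ to produce $P'=\gamma P=(X',Y',Z')$ satisfying one of: (i) $m>0$ and $3\le X'\le Y'\le -Z'$, in which case the equation yields $X'^2+Y'^2+Z'^2\le m$ so $|X'|,|Y'|,|Z'|\le \sqrt{m}$; (ii) $m>0$ and $X'\in\{0,1,2\}$; or (iii) $m<0$ and $3\le X'\le Y'\le Z'$ with $X'\le \sqrt{|m|+9}$, by \autoref{induction} together with \cite[Theorem 1.1 (ii)]{GS} and \cite[Lemma 2.2]{GS}. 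Since reduction modulo $l$ commutes with the $\Gamma$-action, $\overline{P'}$ still lies in $\Sigma$.

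Choose $l$ large enough. In case (i), any coordinate of $P'$ that is $\equiv 0 \pmod l$ must vanish (as $\sqrt{m}<l$), but each coordinate satisfies $|\,\cdot\,|\ge 3$, contradicting $\overline{P'}\in\Sigma$. In case (ii) with $X'=0$, the equation becomes $Y'^2+Z'^2=m$ with $|Y'|,|Z'|\le\sqrt{m}<l$, and matching $\overline{P'}$ to $\Sigma$ forces one of $Y',Z'$ to vanish and the square of the other to equal $m$, contradicting that $m$ is not a square. In case (ii) with $X'\in\{1,2\}$, the first coordinate of any element of $\Sigma$ lies in $\{0,\pm\overline{x_0}\}$, whereas by our choice of $l$ neither $0$ nor $\pm\overline{x_0}$ is congruent to $1$ or $2$ modulo $l$. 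In case (iii), $0<X'\le \sqrt{|m|+9}<l$ forces $X'\not\equiv 0\pmod l$; and the inequalities $\sqrt{|m|+9}<\sqrt{l+m}<l-\sqrt{|m|+9}$, valid for $l$ large, show that neither $x_0$ nor $l-x_0$ lies in $[3,\sqrt{|m|+9}]$, so $X'\not\equiv \pm\overline{x_0}\pmod l$ either. The main obstacle is the sub-case $X'=2$ of (ii), where $(Y'-Z')^2=m-4$ leaves $Y',Z'$ unbounded when $d=m-4$ is a square; however the first-coordinate argument disposes of it along with the other sub-cases, since it requires no bound on $Y',Z'$.
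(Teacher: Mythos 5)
Your proposal is correct and follows essentially the same route as the paper: take $l = x_0^2 - m$ prime with $l$ large, observe that the $\Gamma$-orbit of $(\overline{x_0},0,0)$ in $\mathcal U_m(\mathbb Z/l)$ is the six-element set $\Sigma$, then reduce any putative integral lift by the $\Gamma$-action (using Proposition \ref{induction} for $m>0$, and \cite[Theorem 1.1(ii), Lemma 2.2]{GS} for $m<0$) to a bounded representative whose reduction must still land in $\Sigma$, and rule out each case by size or congruence considerations. The only noteworthy deviation is in the sub-case $X'=0$: you invoke the hypothesis that $m$ is not a square to conclude $Y'^2+Z'^2=m$ with one coordinate vanishing is impossible, whereas the paper argues instead from size alone (the non-vanishing coordinate would have to satisfy $|\,\cdot\,|\ge a > \sqrt{m}$, incompatible with $y_0^2+z_0^2=m$). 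Your bound $|X'|,|Y'|,|Z'|\le\sqrt{m}$ in case (i) is a perfectly serviceable substitute for the paper's sharper $x_0\le (m-27)^{1/3}$, and your ``$l$ large enough'' bookkeeping replaces the paper's explicit inequalities $a^2-2a-m\ge 0$ and $a>\sqrt{|m|+9}$, to the same effect.
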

\begin{proof}  Let $l$ be a prime of the shape $l=a^2-m$ with  $m \in \mathbb Z$ and
$a$ is a positive integer prime to $m$, such that
\begin{equation}\label{inequality-a2}
a^2-2a-m\geq 0 \text{ and }a>\sqrt{|m|+9}.
\end{equation}  By the above conjecture, there exists infinitely many such pairs $(l,a)$. Denote
 $$\mathcal V_{l}:= \{(\pm \overline a,0,0),(0,\pm \overline a,0),(0,0,\pm \overline a)\} \subset  (\mathbb Z/l)^3,$$
 here $\overline a$ is the image of $a$ in $\mathbb Z/l$. It is clear that $\mathcal V_{l} \subset \mathcal U_m(\mathbb Z/l)$ is $\Gamma$-invariant.
		
We will assume $m>0$ (the case $m<0$ can be proved similarly).
Assume that the  map $\mathcal U_m(\mathbb Z) \to \mathcal U_m (\mathbb Z/l)$ is surjective. Then there is an integral point $\vec{x}\in \mathcal U_m(\mathbb Z)\cap \mathcal V_{l}$. By Proposition \ref{induction} (\cite[Theorem 1.1 (ii) and Lemma 2.2]{GS} for $m<0$),
there is an integral point $(x_0,y_0,z_0)\in \mathcal U_m(\mathbb Z)\cap \mathcal V_{l}$ such that $$3\leq x_0 \leq y_0 \leq -z_0 ,\text{ or } x_0=0,1,2.$$
Since $(x_0,y_0,z_0)\in \mathcal V_{l}$, we have
$$(x_0,y_0,z_0) \equiv (\pm a,0,0), (0,\pm a,0) \text{ or } (0,0,\pm a )\ \ \ \mod l,$$
hence, if $x_0> 0$,  
\begin{equation} \label{inequality2}
x_0\geq min\{l,l-a,a\}=a>\sqrt{m+9}
\end{equation}
by (\ref{inequality-a2}).
Since $\sqrt{m+9}>3$, one has $x_0\neq 1,2$. If $3\leq x_0 \leq y_0 \leq -z_0$,
hence $3 \leq x_0 \leq (m-27)^{1/3}$ by (\ref{equ-m}).
But $(x_0,y_0,z_0)\in \mathcal V_{l}$,
one has $x_0>\sqrt{m+9}>(m-27)^{1/3}$ by (\ref{inequality2}), which is a contradiction to $x_0 \leq (m-27)^{1/3}$. Therefore $$x_0=0, y_0^2+z_0^2=m\text{ and } (y_0,z_0)\equiv (\pm a,0) \text{ or } (0,\pm a) \mod l.$$
Then $$(y_0,z_0)\equiv (\pm a,0) \text{ or } (0,\pm a) \mod l$$ implies $|y_0| \text{ or } |z_0| \geq  min \{l-a,a\}=a$, hence $a^2 \leq m$, which is impossible by (\ref{inequality-a2}).
Therefore $\mathcal U_m(\mathbb Z) \to \mathcal U_m (\mathbb Z/l)$ is not surjective.
\end{proof}

\begin{rem}\label{compareBGS} 
When comparing the above results with \cite{BGS},
one should note that the failures of strong approximation described here correspond to
points $(x_{p},y_{p},z_{p}) \in  U_m({\mathbb Z}_{p})$  whose reduction modulo $p$ has two coordinates
equal to $0$, hence which geometrically lift to points whose $\Gamma$-orbit is finite.
\end{rem}

\begin{lem} \label{lem:wsa}Let $k$ be a number field. Let $U$ be a smooth geometrically connected variety over $k$ such that $\Br(U)/\Br_0(U)$ is finite. Let $v$ run through the places of $k$.
Suppose $\mathcal U$ is an integral model of $U$ over $\frak o_k$ with $\mathcal U(A_{\frak o_k})^\Br \neq \emptyset $, here $\mathcal U(A_{\frak o_k})=\prod_{v\mid \infty}U(k_v)\times \prod_{v<\infty}\mathcal U(\frak o_v)$. Let $pr_f: \mathcal U(A_{\frak o_k})\to \prod_{v<\infty} \mathcal U(\frak o_v)$ be the natural projection.

If $\mathcal U(\frak o_k)$ is dense in $pr_f(\mathcal U(A_{\frak o_k})^\Br )$, then there exists a finite set $S$ of places containing $\infty_k$ such that the natural map $\mathcal U(\frak o_k) \to \prod_{v \notin S} \mathcal U_m (\frak o_v)$  has
dense image.
\end{lem}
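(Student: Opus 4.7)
The plan is to exploit two standard finiteness facts: that only finitely many Brauer classes matter (by hypothesis $\Br(U)/\Br_0(U)$ is finite) and that each individual Brauer class evaluates nontrivially on $\mathcal U(\mathfrak o_v)$ only at finitely many places. Together these let us construct a large enough $S$ so that modifying an adele outside $S$ never affects Brauer-Manin orthogonality, and then apply the hypothesis on density.

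First I would pick representatives $\alpha_1,\dots,\alpha_n \in \Br(U)$ of a set of generators of the finite group $\Br(U)/\Br_0(U)$. For each $\alpha_i$, the local evaluation $\alpha_i : \mathcal U(\mathfrak o_v) \to \Br(k_v) \to \mathbb Q/\mathbb Z$ is the zero map for all but finitely many finite $v$ (spread each $\alpha_i$ out to a class in $\Br(\mathcal U_{\mathfrak o_v})$ for $v$ outside some finite set, then use that $\Br(\mathfrak o_v) = 0$ for finite $v$). Let $S$ be the union of $\{v \mid \infty\}$ together with all finite places at which some $\alpha_i$ evaluates nontrivially somewhere on $\mathcal U(\mathfrak o_v)$; this $S$ is finite. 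The key property is: for every $v \notin S$ and every $P_v \in \mathcal U(\mathfrak o_v)$, one has $\alpha_i(P_v) = 0$ for all $i$, and hence $\alpha(P_v) = 0$ for all $\alpha \in \Br(U)$ modulo $\Br_0(U)$ (constants contribute nothing to the global sum).

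Next, fix any open neighborhood $W = \prod_{v \in T\setminus S} W_v \times \prod_{v \notin T \cup S} \mathcal U(\mathfrak o_v) \subset \prod_{v \notin S}\mathcal U(\mathfrak o_v)$, where $T \supset S$ is a finite set of places and each $W_v \subset \mathcal U(\mathfrak o_v)$ is open and nonempty. I want to produce an integral point landing in $W$. By hypothesis choose an adele $(P_v) \in \mathcal U(A_{\mathfrak o_k})^{\Br}$, and pick any points $Q_v \in W_v$ for $v \in T\setminus S$. Define $(P'_v)$ by setting $P'_v = Q_v$ for $v \in T\setminus S$ and $P'_v = P_v$ elsewhere. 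For any $\alpha \in \Br(U)$, the sum $\sum_v \alpha(P'_v)$ differs from $\sum_v \alpha(P_v) = 0$ only by contributions at places $v \in T\setminus S \subset \{v : v \notin S\}$, and by the key property above those contributions vanish. Hence $(P'_v) \in \mathcal U(A_{\mathfrak o_k})^{\Br}$.

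Finally, the finite-place projection $pr_f((P'_v))$ lies in the open set $\prod_{v \in S, v<\infty} \mathcal U(\mathfrak o_v) \times \prod_{v \in T\setminus S} W_v \times \prod_{v \notin T\cup S}\mathcal U(\mathfrak o_v)$ of $\prod_{v<\infty}\mathcal U(\mathfrak o_v)$. By the density hypothesis there exists $x \in \mathcal U(\mathfrak o_k)$ whose finite-place image lies in this open set; in particular its image in $\prod_{v \notin S}\mathcal U(\mathfrak o_v)$ lies in $W$. This proves density. There is no real obstacle here: the argument is essentially bookkeeping, and the only place where hypotheses beyond formal manipulation are needed is the standard fact that each $\alpha_i$ is unramified at almost all $v$ and evaluates to $0$ on $\mathcal U(\mathfrak o_v)$ for such $v$, which uses finiteness of $\Br(U)/\Br_0(U)$ to reduce to finitely many classes at once.
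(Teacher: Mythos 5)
Your proof is correct and follows essentially the same route as the paper's: choose generators of the finite group $\Br(U)/\Br_0(U)$, take $S$ large enough that each generator evaluates trivially on $\mathcal U(\mathfrak o_v)$ for $v\notin S$, observe that (given one point in the Brauer--Manin set) the projection $\mathcal U(A_{\mathfrak o_k})^{\Br}\to \prod_{v\notin S}\mathcal U(\mathfrak o_v)$ is surjective, and then invoke the density hypothesis. The paper simply asserts that surjectivity in one line, whereas you have spelled out the "patching outside $S$" argument behind it.
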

\begin{proof} Suppose  $\mathcal B_1,\cdots, \mathcal B_n$ generate  $\Br(U)/\Br_0(U)$. Then, there exists a finite set $S$ of places containing $\infty_k$ such that $\mathcal B_1,
\cdots, \mathcal B_n$ vanish on $\mathcal U(\frak o_v)$ for any $v\notin S$.
 Since $\mathcal U(A_{\frak o_k})^\Br \neq \emptyset $, the natural projection
$\mathcal U(A_{\frak o_k})^\Br \rightarrow \prod_{v \notin S} \mathcal U (\frak o_v)$
is surjective. So, if $\mathcal U(\frak o_k)$ is dense in $pr_f(\mathcal U(A_{\frak o_k})^\Br )$, then $\mathcal U(\frak o_k)$ is dense in $\prod_{v \notin S} \mathcal U (\frak o_v)$.
\end{proof}

The above lemma is the exact analogue of the well known statement:
if $X$ is projective over a number field $k$  and $\Br(X)/\Br(k)$ is finite, and $X(k)$ is dense in $X(A_{k})^\Br$ nonempty, then
weak weak approximation holds for $X$.

\begin{cor} \label{notsa} Suppose $ m\neq 0,4$ and $\mathcal U_m(A_\mathbb Z)^{\Br}\neq \emptyset$. Then $\mathcal U_m(\Bbb Z)$ is not dense in
$pr_f(\mathcal U_m(A_\mathbb Z)^{\Br})$,
where $pr_f: \mathcal U_m(A_\mathbb Z) \rightarrow \prod_{p<\infty }\mathcal U_m(\Bbb Z_p) $ is the natural projection.
\end{cor}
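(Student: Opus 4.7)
The plan is to deduce this corollary by combining the two preceding results: Theorem \ref{wsa}, which asserts failure of strong approximation away from any finite set of primes, and Lemma \ref{lem:wsa}, which converts density in the projection of the Brauer–Manin set to eventual density at almost all finite places, provided the Brauer group modulo constants is finite.

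First I would verify the finiteness hypothesis in Lemma \ref{lem:wsa}, namely that $\Br(U_m)/\Br_0(U_m)$ is finite. The algebraic part $\Br_1(U_m)/\Br_0(U_m)$ is finite by Theorem \ref{br} (it is a quotient of $(\mathbb Z/2)^4$). The transcendental part $\Br(U_m)/\Br_1(U_m)$ is finite by Theorem \ref{br-gen}, since over the number field $k=\mathbb Q$ the index set $I$ is finite, so $\Br(U_m)/\Br_1(U_m)\cong \mathbb Z/N$ for some $N$. Hence $\Br(U_m)/\Br_0(U_m)$ is finite, as required.

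Next I would argue by contradiction. Suppose $\mathcal U_m(\mathbb Z)$ is dense in $pr_f(\mathcal U_m(A_\mathbb Z)^{\Br})$. Applying Lemma \ref{lem:wsa} with $k=\mathbb Q$, $\mathfrak o_k=\mathbb Z$, and $\mathcal U=\mathcal U_m$, and using the hypothesis $\mathcal U_m(A_\mathbb Z)^{\Br}\neq\emptyset$, we obtain a finite set $S$ of places (containing the archimedean place) such that the natural map
\[
\mathcal U_m(\mathbb Z) \longrightarrow \prod_{p \notin S} \mathcal U_m(\mathbb Z_p)
\]
has dense image. This directly contradicts Theorem \ref{wsa}, which states the opposite for every such $S$.

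There is essentially no main obstacle here beyond packaging: the two prior results are specifically tailored so that this corollary falls out immediately. The only technical point worth flagging is that Lemma \ref{lem:wsa} is phrased for the full Brauer group $\Br(U)$ rather than $\Br_0(U)$-classes, so one must note that only finitely many generators of $\Br(U_m)/\Br_0(U_m)$ are needed, and each such representative has trivial local evaluation at $\mathcal U_m(\mathbb Z_p)$ for all but finitely many primes $p$ (these primes being absorbed into $S$). Once this is observed, the implication from Lemma \ref{lem:wsa} applies verbatim, and Theorem \ref{wsa} supplies the contradiction.
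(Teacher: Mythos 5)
Your proposal is correct and follows exactly the same route as the paper: finiteness of $\Br(U_m)/\Br_0(U_m)$ from Theorems \ref{br} and \ref{br-gen}, then the contradiction between Lemma \ref{lem:wsa} and Theorem \ref{wsa}. No divergence from the paper's argument.
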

\begin{proof} By Theorem \ref{br} and \ref{br-gen}, $\Br(U_m)/\Br_0(U_m)$ is finite.  The proof follows from Theorem \ref{wsa} and Lemma \ref{lem:wsa}.
\end{proof}

\begin{cor} \label{notzariski}
 Let $pr_f: \mathcal U_m(A_\mathbb Z) \rightarrow \prod_{p<\infty }\mathcal U_m(\Bbb Z_p) $ be the natural projection. Assume that $\mathcal U_m(\mathbb Z)\neq \emptyset$.

If $m>4$ is not a square,  or $m$ is a square with a prime factor congruent  to $1 \mod 4$, or $m<0$, then  $\mathcal U_m(\mathbb Z)$ is Zariski dense but is not dense in
$pr_f(\mathcal U_m(A_\mathbb Z)^{\Br})$.
\end{cor}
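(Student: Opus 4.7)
The plan is to split the statement into two independent parts, neither of which requires much new work: the Zariski density follows from results of Ghosh--Sarnak already recalled in the introduction, while the failure of density in $pr_f(\mathcal{U}_m(A_\Bbb Z)^{\Br})$ is an immediate consequence of Corollary~\ref{notsa}.

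For the Zariski density assertion, I would dispatch each of the three cases by citation. When $m>4$ is not a square, or when $m<0$ (in which case $m$ is automatically not an integer square), the criterion from \cite[(1.5)]{GS} applies: since by hypothesis $\mathcal{U}_m(\Bbb Z) \neq \emptyset$, Zariski density follows. When $m$ is a square possessing an odd prime factor congruent to $1$ modulo $4$, Zariski density is established in the final comment of \cite[\S 5.2.1]{GS}; note that this condition forces $m \geq 25$, so in particular $m \neq 0,4$ in every case considered.

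For the failure of density, observe first that integral points always lie in the integral Brauer--Manin set, so the nonemptiness of $\mathcal{U}_m(\Bbb Z)$ forces $\mathcal{U}_m(A_\Bbb Z)^{\Br} \neq \emptyset$. In each of the three cases the condition $m \neq 0,4$ holds (obvious when $m>4$ or $m<0$, and recorded above in the square case). Corollary~\ref{notsa} then applies directly, yielding that $\mathcal{U}_m(\Bbb Z)$ is not dense in $pr_f(\mathcal{U}_m(A_\Bbb Z)^{\Br})$.

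There is no substantial obstacle to overcome here: the real content has been absorbed into Theorem~\ref{wsa} (the reduction-theoretic argument showing that strong approximation fails away from any finite set of places) and into the Brauer-group computations of Theorems~\ref{br} and \ref{br-gen}, which provide the finiteness of $\Br(U_m)/\Br_0(U_m)$ needed to invoke Lemma~\ref{lem:wsa}. What this corollary records is the striking consequence of combining these with \cite{GS}: there are infinitely many $m$ for which the Zariski closure of $\mathcal{U}_m(\Bbb Z)$ is all of $U_m$ while the closure of $\mathcal{U}_m(\Bbb Z)$ in the finite-adelic space strictly fails to fill up the integral Brauer--Manin set, a behaviour that, to our knowledge, had not previously been observed even in the context of rational points.
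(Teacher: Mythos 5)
Your proof is correct and follows essentially the same route as the paper's: Zariski density is cited from Ghosh--Sarnak (the paper's proof simply invokes \cite[\S 5.2]{GS}, which subsumes the two statements you cite from \cite[(1.5)]{GS} and \cite[\S 5.2.1]{GS}), and the failure of density in $pr_f(\mathcal U_m(A_\mathbb Z)^{\Br})$ is deduced directly from Corollary~\ref{notsa}. Your proposal is slightly more careful in spelling out that the hypotheses force $m\neq 0,4$ and that $\mathcal U_m(\mathbb Z)\neq\emptyset$ guarantees $\mathcal U_m(A_\mathbb Z)^{\Br}\neq\emptyset$, details the paper leaves implicit.
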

\begin{proof} By \cite[\S 5.2]{GS}, $\mathcal U_m(\Bbb Z)$ is Zariski dense. The result follows from  Corollary \ref{notsa}.
\end{proof}

Let $X$ be a smooth, projective and geometrically connected variety over a number field $k$ such that $\Br(X)/\Br_0(X)$ is finite and the  Brauer-Manin set of $X$ is not empty.  It is well known that $X(k)$ is Zariski dense in $X$ if $X(k)$ is dense in its Brauer-Manin set. Indeed this then follows from weak weak approximation.
Let $S\supset \infty_k$  be a finite subset of $\Omega_k$, $\frak o_S$ the ring of $S$-integers of $k$. Let $U$ be a smooth geometrically connected variety $U$ over $k$,  $\mathcal U$ an integral model over $\frak o_S$.  We denote
$$\mathcal{U}(A_{\frak o_S})= \prod_{v\in S}U(k_v)\times \prod_{v\not \in S} \mathcal{U}(\frak o_{v})$$ where $k_v$ and $\frak o_v$ are the completion of $k$ and $\frak o_S$ with respect to $v\in \Omega_k$ respectively. One has the following integral analogy.

\begin{prop}\label{aiz} Let $U$ be a smooth geometrically connected variety over a number field $k$ such that $\Br(U)/\Br_0(U)$ is finite.
Suppose $\mathcal U$ is an integral model of $U$ over $\frak o_S$ with $\mathcal U(A_{\frak o_S})^\Br \neq \emptyset $. If $\mathcal U(\frak o_S)$ is dense in $pr_{S}(\mathcal U(A_{\frak o_S})^\Br )$  where $pr_S: \mathcal U(A_{\frak o_S})\to \prod_{v \not \in S} \mathcal U(\frak o_v)$ is the natural projection, then $\mathcal U(\frak o_S)$ is Zariski dense in $\mathcal U$.
\end{prop}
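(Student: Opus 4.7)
The plan is to adapt the strategy used to prove Lemma \ref{lem:wsa}, supplemented by a standard Lang-Weil plus Hensel argument for producing local points away from any prescribed closed subvariety. First I will replay the proof of Lemma \ref{lem:wsa} in the $S$-integral setting: picking generators $\mathcal B_1,\ldots,\mathcal B_n$ of the finite group $\Br(U)/\Br_0(U)$, I enlarge $S$ to a finite set $S_1\supset S$ such that each $\mathcal B_i$ extends to an element of $\Br$ of the model over $\frak o_{S_1}$ and hence (since $\Br(\frak o_v)=\Br(\kappa(v))=0$ for a finite residue field) vanishes on $\mathcal U(\frak o_v)$ for every $v\notin S_1$. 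Splicing any local datum at places $v\notin S_1$ into a fixed element of the non-empty set $\mathcal U(A_{\frak o_S})^{\Br}$ leaves every Brauer pairing unchanged, so the map $pr_S(\mathcal U(A_{\frak o_S})^{\Br})\to\prod_{v\notin S_1}\mathcal U(\frak o_v)$ is surjective. Combined with the density assumption, this yields that $\mathcal U(\frak o_S)\to\prod_{v\notin S_1}\mathcal U(\frak o_v)$ has dense image for the product of $v$-adic topologies.

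Now let $Z\subsetneq U$ be a proper closed subset; the goal is to exhibit a point of $\mathcal U(\frak o_S)$ outside $Z$. Because $U$ is smooth and geometrically connected, $U\setminus Z$ is smooth and geometrically integral over $k$, and after enlarging $S_1$ once more I may arrange that $\mathcal U$ is smooth over $\frak o_{S_1}$ and that $(\mathcal U\setminus Z)_{\kappa(v)}$ is smooth and geometrically integral of dimension $\dim U$ for every $v\notin S_1$. The Lang-Weil estimate then provides a $\kappa(v_0)$-point of $(\mathcal U\setminus Z)_{\kappa(v_0)}$ as soon as $|\kappa(v_0)|$ is large enough; I fix such a place $v_0\notin S_1$ and lift the $\kappa(v_0)$-point to $P_{v_0}\in\mathcal U(\frak o_{v_0})$ by Hensel's lemma. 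By construction, the generic fibre of $P_{v_0}$ does not lie in $Z(k_{v_0})$, so the subset
\[
W=\Bigl(\prod_{v\notin S_1,\, v\neq v_0}\mathcal U(\frak o_v)\Bigr)\times\bigl(\mathcal U(\frak o_{v_0})\setminus Z(k_{v_0})\bigr)
\]
is a non-empty open subset of $\prod_{v\notin S_1}\mathcal U(\frak o_v)$, openness using that $Z(k_{v_0})\cap\mathcal U(\frak o_{v_0})$ is $v_0$-adically closed. The density output of the first step then yields $P\in\mathcal U(\frak o_S)$ whose image in $\prod_{v\notin S_1}\mathcal U(\frak o_v)$ lies in $W$; in particular $P\notin Z(k_{v_0})$, hence $P\notin Z(k)$, and Zariski density follows since $Z$ was arbitrary.

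The main obstacle, such as it is, is bookkeeping: $S_1$ must be large enough to simultaneously absorb the spreading-out of the Brauer classes, the smoothness of $\mathcal U$, and the geometric integrality of $(\mathcal U\setminus Z)_{\kappa(v)}$, and one has to verify that each enlargement of $S_1$ strengthens rather than weakens the density conclusion inherited from the original set $S$. The latter is immediate, since density in $\prod_{v\notin S_1}\mathcal U(\frak o_v)$ implies density in $\prod_{v\notin S_1'}\mathcal U(\frak o_v)$ for every $S_1'\supset S_1$ by projecting. Otherwise the proof is a direct integral analogue of the classical implication ``weak-weak approximation forces Zariski density'' for smooth projective varieties, with Lang-Weil playing the role that the reduction theory of Markoff triples plays in the proof of Theorem \ref{wsa}.
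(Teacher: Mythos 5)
Your proposal is correct and is essentially the paper's own argument: the paper likewise reduces to producing an $\frak o_S$-point in a given nonempty open $\mathcal N\subset\mathcal U$ by choosing $v_0$ outside a finite set $S'\supset S$ on which the Brauer generators vanish on $\mathcal U(\frak o_v)$ and for which $\mathcal N(\frak o_v)\neq\emptyset$, and then invoking the density hypothesis. You merely spell out the Lang--Weil plus Hensel justification for $\mathcal N(\frak o_{v_0})\neq\emptyset$ (which the paper simply asserts) and make explicit, as in Lemma \ref{lem:wsa}, the intermediate fact that the image of $\mathcal U(\frak o_S)$ in $\prod_{v\notin S_1}\mathcal U(\frak o_v)$ is dense.
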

\begin{proof} Let $\mathcal N$ be a non-empty Zariski open subset of $\mathcal U$ and fix a finite set $B\subset \Br(U)$ generating $\Br(U)/\Br_0(U)$. There is a sufficiently large finite subset   $S' \supset S$ of $\Omega_k$ such that $\mathcal N(\frak o_{v})\neq \emptyset$, $\mathcal N$ is smooth over $\frak o_{v}$ and each element in $B$ vanishes over  $\mathcal U(\frak o_{v})$ for all $v\not\in S'$.
 
Take $v_0\not\in S'$. Then the open subset
$$\mathcal N(\frak o_{v_0}) \times \prod_{v\not\in (S\cup\{ v_0\}) } \mathcal U(\frak o_{v}) 
\subset pr_S(\mathcal{U}(A_{\frak o_S})^{\Br}) $$
has non-empty intersection with $\mathcal U(\frak o_{S})$
  by the assumption. This implies that
$$ \mathcal U(\frak o_{v_0}) \supset \mathcal U(\frak o_S) \cap \mathcal N(\frak o_{v_0}) \neq \emptyset .$$  Therefore $\mathcal N\cap \mathcal U(\frak o_S) \neq \emptyset$ as desired.
\end{proof}

As we have seen in this section, the converse of Proposition \ref{aiz}  does not hold.

\section{Appendix: the real locus}\label{real}

We here provide details for Remark 5.3. The following lemma should be well known. We provide the proof for convenience of the reader.

\begin{lem} \label{a1} Let $X$ be a topological space with a covering $\{X_i\}$  of connected subsets of $X$. Assume that  
 for any two elements $Y$ and $Z$ in $\{X_i\}$, there are $X_{1}, \cdots, X_{k}$ in $\{X_i\}$  satisfying $$ \overline{Y}\cap \overline{X}_{1} \neq \emptyset, \  \overline{X}_{1} \cap \overline{X}_{2} \neq \emptyset,  \cdots, \overline{X}_{k-1} \cap \overline{X}_{k} \neq \emptyset, \ \overline{X}_{k} \cap \overline {Z} \neq \emptyset $$ 
where $\overline{Y}, \overline{X}_{1}, \cdots, \overline{X}_{k}, \overline{Z}$ are the topological closures of $Y, X_{1}, \cdots, X_{k}, Z$ in $X$ respectively. Then $X$ is connected. 
\end{lem}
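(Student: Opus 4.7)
The plan is to argue by contradiction. Suppose $X$ is disconnected, so we can write $X=A\sqcup B$ with $A,B$ nonempty, open, and disjoint; then both $A$ and $B$ are also closed in $X$. Since each $X_i$ is connected and $A,B$ form a partition of $X$ by open sets, every $X_i$ lies entirely in $A$ or entirely in $B$. Because $A$ and $B$ are closed, the same dichotomy holds for each closure: if $X_i\subset A$ then $\overline{X_i}\subset \overline A=A$, and similarly for $B$.

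The key observation is that the chain hypothesis is incompatible with this dichotomy crossing between $A$ and $B$. Indeed, if $W,W'$ are two members of the covering with $W\subset A$ and $W'\subset B$, then $\overline W\cap \overline{W'}\subset A\cap B=\emptyset$, so their closures cannot meet.

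To finish, pick any $Y$ among the $X_i$ contained in $A$ and any $Z$ among the $X_i$ contained in $B$; such choices exist because $\{X_i\}$ covers $X$ and both $A$ and $B$ are nonempty. Apply the hypothesis to obtain $X_1,\dots,X_k$ in the covering with
\[
\overline Y\cap\overline{X_1}\neq\emptyset,\ \overline{X_1}\cap\overline{X_2}\neq\emptyset,\ \dots,\ \overline{X_{k-1}}\cap\overline{X_k}\neq\emptyset,\ \overline{X_k}\cap\overline Z\neq\emptyset.
\]
Starting from $Y\subset A$ and using the observation above, induction on the index $j$ forces $X_j\subset A$ for all $j=1,\dots,k$, and then the last intersection forces $Z\subset A$, contradicting $Z\subset B$. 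Hence $X$ is connected.

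There is essentially no obstacle here: the only point to be careful about is to invoke that $A$ and $B$ are simultaneously open and closed, so that connectedness of each $X_i$ yields not merely $X_i\subset A$ or $X_i\subset B$ but the stronger statement about the closure $\overline{X_i}$, which is what makes the chain argument work.
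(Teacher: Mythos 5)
Your proof is correct and takes essentially the same approach as the paper: argue by contradiction from a clopen decomposition, use connectedness and closedness to force each $\overline{X_i}$ entirely into one side, and then propagate that containment along the chain. The only cosmetic difference is that you phrase disconnectedness as $X=A\sqcup B$ while the paper works with a single proper nonempty clopen subset $D$; these are interchangeable formulations of the same argument.
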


\begin{proof} Suppose that $X$ is not connected. Then $X$ contains a non-empty, open and closed subset $D\neq X$. Since $\{X_i\}$ is a covering of $X$, there is $Z$ in $\{X_i\}$ such that $Z\not\subset D$. 

 On the other hand, one has \begin{equation} \label{extrem} D\cap \overline{X_i}=\emptyset   
\ \ \ \text{or} \ \ \  \overline{X_i} \subset D\end{equation} for each element $X_i$ in $\{X_i\}$ by the connectedness of $X_i$. Since $D$ is not empty, there is $Y$ in $\{X_i\}$ such that  $\overline{Y} \subset D$ by (\ref{extrem}). By the assumption, there are $X_{1}, \cdots, X_{k}$ in $\{X_i\}$ satisfying $$ \overline{Y}\cap \overline{X}_{1} \neq \emptyset, \  \overline{X}_{1} \cap \overline{X}_{2} \neq \emptyset,  \cdots, \overline{X}_{k-1} \cap \overline{X}_{k} \neq \emptyset, \ \overline{X}_{k} \cap \overline {Z} \neq \emptyset . $$ 
Therefore $ \overline {X}_{1} \subset D$ by (\ref{extrem}).  Applying (\ref{extrem}) repeatedly, one gets $$ \overline {X}_{2} \subset D,  \cdots ,  \overline {X}_{k} \subset D. $$ Finally, one concludes that $ \overline {Z} \subset D$ by (\ref{extrem}). A contradiction is derived. 
\end{proof}

Recall that  $U_m$ is the affine scheme over $\Bbb R$ defined by the equation
\begin{equation} \label{b-eq} x^2+y^2+z^2-xyz= m.  \end{equation}

\begin{prop} For $m\in \Bbb R$, the number of connected components of $U_m(\Bbb R)$ is given by 
$$\#\pi_{0}(U_m(\mathbb R))=\begin{cases}
1 \ \ \ & \text{ for } m\geq 4\\
5 \ \ \ & \text{ for } 0\leq m <4\\
4 \ \ \ & \text{ for } m <0 . 
\end{cases}$$
More precisely, 

When $m<0$, the connected components of $U_m(\Bbb R)$ are 
$$ \begin{cases} \{(x,y,z) \in U_m(\Bbb R): x\geq 2, \ y\geq 2 \} \\
\{ (x, y, z) \in U_m(\Bbb R): x\leq -2, \ y\geq 2 \} \\
\{ (x, y, z)\in U_m(\Bbb R): x\leq -2, \ y\leq -2 \} \\
\{(x, y, z)\in U_m(\Bbb R): x\geq 2, \ y\leq -2 \}. 
\end{cases} $$
They are unbounded and transitively permuted by
$\Gamma$.

When $0\leq m <4$, the connected components of $U_m(\Bbb R)$ are
$$ \begin{cases} \{(x,y,z) \in U_m(\Bbb R): x\geq 2, \ y\geq 2 \} \\
\{ (x, y, z) \in U_m(\Bbb R): x\leq -2, \ y\geq 2 \} \\
\{ (x, y, z)\in U_m(\Bbb R): x\leq -2, \ y\leq -2 \} \\
\{(x, y, z)\in U_m(\Bbb R): x\geq 2, \ y\leq -2 \} \\ 
\{ (x, y, z)\in U_m(\Bbb R): -2\leq x\leq 2, \ -2 \leq y \leq 2 \}. 
\end{cases} $$
The first four components are unbounded and  $\Gamma$ permutes them transitively.
The last component  is bounded and reduced to the  point  $(0,0,0)$ if $m=0$.

When $4\leq m$, then $U_m(\Bbb R)$ is connected and unbounded.
\end{prop}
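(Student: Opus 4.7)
The plan is to analyse $U_m(\Bbb R)$ through the projection $f \colon U_m(\Bbb R) \to \Bbb R^2$, $(x,y,z) \mapsto (x,y)$. Solving equation~(\ref{b-eq}) as a quadratic in $z$ gives $z = (xy \pm \sqrt{\Delta(x,y)})/2$ where
\[
\Delta(x,y) := x^2 y^2 - 4(x^2+y^2-m) = (x^2-4)(y^2-4) + 4(m-4).
\]
Thus $f$ has image $W := \{(x,y) \in \Bbb R^2 : \Delta(x,y) \geq 0\}$, is a two-sheeted cover over the interior $\{\Delta > 0\}$, and is one-to-one over the branch locus $\{\Delta = 0\}$, the two sheets being exchanged by the involution $z \leftrightarrow xy - z$.

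The first step is to show that $f$ induces a bijection $\pi_0(U_m(\Bbb R)) \xrightarrow{\sim} \pi_0(W)$. Surjectivity is immediate. For injectivity, I would show that $f^{-1}(W_0)$ is connected for each component $W_0$ of $W$: if $W_0$ is a single point the fibre is a single point, and otherwise $W_0$ is the closure of a non-empty open set and has non-empty branch locus $\partial W_0 \subset \{\Delta = 0\}$, and then $f^{-1}(W_0)$ is obtained by gluing two copies of $W_0$ along $\partial W_0$ via the above involution and is clearly connected.

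The combinatorial core is then the counting of $\pi_0(W)$. The four lines $x = \pm 2$, $y = \pm 2$ partition $\Bbb R^2$ into nine open pieces: four unbounded \emph{corners} (the open versions of the four sets $\{\pm x \geq 2,\ \pm y \geq 2\}$), four \emph{strips} of the form $\{|x|<2,\ |y|>2\}$ or $\{|x|>2,\ |y|<2\}$, and the \emph{centre} $\{|x|<2,\ |y|<2\}$. On corners and centre one has $(x^2-4)(y^2-4) \geq 0$, on strips $\leq 0$, and on the separating lines $=0$. For $m \geq 4$ the four lines lie in $W$ (since $\Delta = 4(m-4) \geq 0$ on them) and glue all nine pieces into a single component. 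For $0 \leq m < 4$ one has $\Delta \leq 4(m-4) < 0$ on each strip and on each of the four lines, so the strips together with the lines are disjoint from $W$; the four corners and the centre then contribute five pairwise separated components of $W$, with the centre component $\{(4-x^2)(4-y^2) \geq 4(4-m)\}$ reducing to $\{(0,0)\}$ when $m=0$. For $m < 0$, the centre also drops out since $(4-x^2)(4-y^2) \leq 16 < 4(4-m)$, leaving exactly the four corner components.

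The explicit descriptions in the proposition then follow by pulling back each component of $W$ via $f$; boundedness of the central component when $0 \leq m < 4$ follows from $|x|, |y| \leq 2$ together with the explicit estimate $|z| \leq 2 + \sqrt{m}$ coming from $z = (xy \pm \sqrt{\Delta})/2$ and $\Delta \leq 4m$ in the centre, while the four corner components are unbounded as they contain the diagonal family $(n,n,z_n) \in U_m(\Bbb R)$ for $n \to +\infty$. For the transitive action of $\Gamma$ on the four corner components, note that $\Gamma$ contains $(x,y,z) \mapsto (-x,-y,z)$ by definition and therefore also the two further double sign changes $(-x,y,-z)$ and $(x,-y,-z)$ by conjugation with the coordinate transpositions; these three involutions permute the four corner sets transitively. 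The main work is the geometric case analysis of $W$ and of the branch locus, after which the bijection $\pi_0(U_m(\Bbb R)) \cong \pi_0(W)$ and the $\Gamma$-action are essentially formal.
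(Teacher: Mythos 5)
Your proof is correct, and it organizes the argument differently from the paper's appendix proof, though in a way close to the informal Remark~5.3. The paper's proof works directly with nine closed subsets $D_1,\dots,D_9$ of $U_m(\Bbb R)$ (cut out by the signs of $x\pm 2$ and $y\pm 2$), asserts their connectedness, and then invokes an abstract topological gluing lemma (Lemma~\ref{a1}) to deduce connectedness of $U_m(\Bbb R)$ when $m\geq 4$. You instead work first in the plane: compute $\pi_0(W)$ for the shadow $W=\{\Delta\geq 0\}$, then pull back via the branched double cover $f$. The genuine advantage of your route is that it makes explicit \emph{why} each piece is connected---the two sheets of $f$ over a component $W_0$ are each homeomorphic to $W_0$ and meet along the non-empty branch locus in $W_0$---whereas the paper simply states the connectedness of the $D_i$ after pointing to the form of the equation $(2z-xy)^2=(x^2-4)(y^2-4)+4(m-4)$. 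Both proofs rest on the same $\Bbb R^2$-combinatorics (the four lines $x=\pm 2$, $y=\pm 2$, and the sign of $(x^2-4)(y^2-4)$) and the key inequality that $\Delta\leq 4(m-4)<0$ on the lines and strips when $m<4$. Two minor points: (i) the gluing of the two sheets of $f^{-1}(W_0)$ occurs along all of $W_0\cap\{\Delta=0\}$, which contains but may strictly exceed $\partial W_0$; this only strengthens the argument. (ii) You should say a word about why each corner piece $W\cap\{\pm x\geq 2,\pm y\geq 2\}$ and the centre piece are themselves connected (e.g.\ via the change of variables $u=x^2-4$, $v=y^2-4$, which reduces the corner piece to the visibly connected region $\{u,v\geq 0,\ uv\geq c\}$); the paper is also silent on this, but your reorganization makes it the natural place to supply the justification.
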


\begin{proof}  Since (\ref{b-eq}) is equivalent to 
$$ (2z-xy)^2= (x^2-4)(y^2-4)+ 4(m-4) ,$$ one concludes that the following closed subsets of $U_m(\Bbb R)$ 
$$ \begin{cases} D_1=\{(x,y,z) \in U_m(\Bbb R): x\geq 2, \ y\geq 2 \} \\
D_2= \{ (x,y,z) \in U_m(\Bbb R): -2\leq x\leq 2, \ y\geq 2 \} \\
D_3= \{ (x, y, z) \in U_m(\Bbb R): x\leq -2, \ y\geq 2 \} \\
D_4= \{ (x, y, z) \in U_m(\Bbb R): x\leq -2, \ -2\leq y\leq 2 \} \\
D_5= \{ (x, y, z)\in U_m(\Bbb R): x\leq -2, \ y\leq -2 \} \\
D_6=\{ (x,y,z)\in U_m(\Bbb R): -2\leq x\leq 2, \ y\leq -2\} \\
D_7=\{(x, y, z)\in U_m(\Bbb R): x\geq 2, \ y\leq -2 \} \\
D_8=\{ (x, y, z) \in U_m(\Bbb R): x\geq 2, \ -2\leq y \leq 2 \}\\
D_9= \{ (x, y, z)\in U_m(\Bbb R): -2\leq x\leq 2, \ -2 \leq y \leq 2 \} 
\end{cases} $$
are connected with $U_m(\Bbb R)=\bigcup_{i=1}^9 D_i$. 

When $m\geq 4$, then $D_9 \cap D_i \neq \emptyset$ for $1\leq i\leq 8$.  Therefore $U_m(\Bbb R)$ is connected by Lemma \ref{a1}.

When $m<4$, then $D_2=D_4=D_6=D_8=\emptyset$. Moreover $D_9=\emptyset$ if and only if $m<0$. In this case, one obtains that $D_1, D_3, D_5, D_7$ are the connected components of $U_m(\Bbb R)$, which are unbounded. Using $(x,y,z) \mapsto (-x,-y,z)$ and $(x,y,z) \mapsto (-x,y,-z)$
one sees that $\Gamma$ transitively permutes these 4 components. 
For $0\leq m<4$, one has $D_9\cap D_i=\emptyset$ for $i=1,3,5,7$. Therefore $D_9$ is a bounded connected component of $U_m(\Bbb R)$. 
\end{proof}

{\it Acknowledgements.} Dasheng Wei and Fei Xu are supported by National Natural Science Foundation of China, Grant No. 11631009.
Dasheng Wei is also supported by National Natural Science Foundation of China, Grant No.  11622111. The referee's comments helped us
improve the presentation of the paper.

\bibliographystyle{alpha}
 
\end{document}